\newtheorem{theorem}{\sc Theorem}[section]
\newtheorem{lemma}[theorem]{\sc Lemma}
\newtheorem{proposition}[theorem]{\sc Proposition}
\newtheorem{corollary}[theorem]{\sc Corollary}
\newtheorem{remark}{\sc Remark}
\begin{document}

\title{Coprime commutators in profinite groups}

\author{Cristina Acciarri}
\address{\textnormal{Dipartimento di Scienze Fisiche, Informatiche e Matematiche, Universit\`a degli Studi di Modena e Reggio Emilia, Via Campi 213/b, I-41125 Modena, Italy }\\ \small \textit{Email address:} \textsf{cristina.acciarri@unimore.it}}

\author{Pavel Shumyatsky}
\address{\textnormal{Department of Mathematics, University of Brasilia, DF~70910-900, Brazil }\\ \small \textit{Email address:} \textsf{pavel@unb.br}}

\thanks{The first author is a member of ``National Group for Algebraic and Geometric Structures, and Their Applications'' (GNSAGA–INdAM). The second author  was partially  supported by the Conselho Nacional de Desenvolvimento Cient\'{\i}fico e Tecnol\'ogico (CNPq)  and Funda\c c\~ao de Apoio \`a Pesquisa do Distrito Federal (FAPDF), Brazil. Part of this work was done while the authors were visiting ICTP (Trieste). They thank ICTP-INdAM for the Collaborative and Research in Pairs grant and ICTP for excellent hospitality.
The authors thank the referees for helpful comments on an earlier version of the paper.}
\keywords{Profinite groups; procyclic groups, coprime commutators}
\subjclass[2010]{20E18}

\begin{abstract} 
By a coprime commutator in a profinite group $G$ we mean any element of the form $[x, y]$, where $x,y\in G$ and $(|x|,|y|)=1$. It is well-known that the subgroup generated by the coprime commutators of $G$ is precisely the pronilpotent residual $\gamma_\infty(G)$. There are several recent works showing that finiteness conditions on the set of coprime commutators have strong impact on the properties of $\gamma_\infty(G)$ and, more generally, on the structure of $G$. In this paper we show that if the set of coprime commutators of a profinite group $G$ is covered by countably many procyclic subgroups, then 
$\gamma_\infty(G)$ is finite-by-procyclic. In particular, it follows that $G$ is finite-by-pronilpotent-by-abelian.\end{abstract}

\maketitle

\section{Introduction} 

Let $G$ be a profinite group. A commutator of the form $[x,y]$, with $x,y\in G$ such that $(|x|,|y|)=1$, is called a coprime commutator of $G$. It is well-known (and easy to see) that the subgroup generated by the coprime commutators of $G$ is precisely the pronilpotent residual $\gamma_\infty(G)$, that is, the minimal normal subgroup $N\leq G$ with the property that $G/N$ is pronilpotent.

The present article continues the line of research showing that a subgroup of a profinite group must have a very restricted structure whenever a canonical set of generators of that subgroup is small. An important illustration for this phenomenon is provided by the results on conciseness of words (see \cite{acc23, AcSh14, DeMoSh23Isr, DeMoShEng1}). The results show that for many group-words $w$ the verbal subgroup $w(G)$ of a profinite group $G$ is finite whenever $w$ takes only finitely many values in $G$. Moreover, it is now known that for some words $w$ the verbal subgroup $w(G)$ of a profinite group $G$ is finite whenever $w$ takes less than continuously many values in $G$ (see \cite{HPS23, dks20, KhSh23, pishu}).

One source of motivation for the present work is the following theorem obtained in \cite{DMS_2021}: Suppose that a profinite group $G$ contains less than continuously many coprime commutators. Then $\gamma_\infty(G)$ is finite.

There are several papers showing that whenever $w$-values in a profinite group $G$ are covered by finitely, or countably, many subgroups with certain property the verbal subgroup $w(G)$ almost has that property as well (see \cite{AS_2017,AS_2017_2, AS_13, FAMS, Shu_16}). In particular, it was shown in \cite{AS_2017} that if the set of commutators in a profinite group $G$ is contained in union of countably many procyclic subgroups, then the commutator subgroup $G'$ is finite-by-procyclic.

The main result of this paper is as follows.

\begin{theorem}\label{main} Let $G$ be a profinite group and assume that the set of coprime commutators of $G$ is contained in a union of countably many procyclic subgroups. Then $\gamma_\infty(G)$ is finite-by-procyclic.
\end{theorem}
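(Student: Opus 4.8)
\emph{Proof strategy.} The plan is to adapt the Baire-category argument used for ordinary commutators in \cite{AS_2017} to the coprime setting, the essential new difficulty being that the two arguments of a coprime commutator may only be perturbed within Sylow subgroups; at a suitable point I would also feed in the ``less than continuum'' theorem of \cite{DMS_2021} to dispose of a residual quotient. Throughout put $R=\gamma_\infty(G)$, the closed subgroup topologically generated by the set of coprime commutators of $G$, and let $C_{1},C_{2},\ldots$ be procyclic subgroups whose union contains that set. The first move is a reduction to prime-power arguments: combining the identity $[ab,c]=[a,c]^{b}[b,c]$ with the decomposition of a procyclic group as the Cartesian product of its Sylow subgroups, every coprime commutator is a convergent product of conjugates of commutators $[x,y]$ with $x$ a $p$-element and $y$ a $q$-element for distinct primes $p\ne q$; since this set of commutators is invariant under conjugation, $R$ is topologically generated by it, and in fact $R$ is topologically generated by the normal subgroups $R_{p}$ ($p$ a prime), where $R_{p}$ is the closed subgroup generated by all $[x,y]$ with $x$ a $p$-element and $y$ a $p'$-element.

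Next, I fix distinct primes $p\ne q$. The subset $Y_{p,q}\subseteq G\times G$ consisting of pairs $(x,y)$ with $x$ a $p$-element and $y$ a $q$-element is closed, hence a Baire space, and the commutator map carries it into $\bigcup_{i}C_{i}$. Writing $Y_{p,q}$ as the countable union of the closed preimages of the $C_{i}$, one of these preimages has nonempty interior; refining, I obtain an index $i$, an open normal subgroup $N$ of $G$, and elements $a,b$ --- which I may take inside a Sylow $p$-subgroup $P$ and a Sylow $q$-subgroup $Q$ of $G$ respectively --- such that $[am,bn]\in C_{i}$ for all $m\in P\cap N$ and all $n\in Q\cap N$.

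Now the fact that the procyclic subgroup $C_{i}$ is abelian forces the commutators $[am,bn]$ to commute with one another, and I would exploit this through commutator calculus together with coprimality-based reductions (such as the standard fixed-point and commutator decompositions for operator groups of coprime order) to show that the contribution of the pair $(p,q)$ to $R$ is, modulo a finite normal subgroup, procyclic --- appealing to \cite{DMS_2021} wherever the bookkeeping produces a quotient with fewer than continuum coprime commutators. Carrying this out over all pairs $(p,q)$ and assembling, the objective is to produce a finite normal subgroup $F$ of $G$ with $F\le R$ and $R'\le F$ --- so that $R/F$ is abelian --- and with every Sylow subgroup of $R/F$ procyclic; since an abelian profinite group with procyclic Sylow subgroups is itself procyclic, this yields that $R/F$ is procyclic and hence that $R$ is finite-by-procyclic.

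The crux will be that last step. In \cite{AS_2017} the two arguments of $[x,y]$ are perturbed independently inside open sets, whereas here the perturbations must keep $x$ a $p$-element and $y$ a $q$-element, which confines the argument to the Sylow subgroups and forces the commutator identities to be organized one prime at a time. Turning the resulting prime-by-prime information into a single statement about $R$ is the delicate point, precisely because $\gamma_\infty(G)$ need not be the Cartesian product of its Sylow subgroups: one has to keep track of how the normal subgroups $R_{p}$ interlock inside $R$ while isolating the finite normal subgroup $F$.
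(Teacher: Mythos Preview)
Your outline has a genuine gap exactly where you flag ``the crux'': the assembly over all pairs $(p,q)$. Even granting that for each pair you can extract a finite normal subgroup $F_{p,q}$ modulo which the $(p,q)$-contribution is procyclic, there are infinitely many pairs and nothing in your argument bounds the $F_{p,q}$ uniformly; the closure of $\prod_{p,q}F_{p,q}$ has no reason to be finite. Worse, your Baire step for a fixed $(p,q)$ only controls $[am,bn]$ with $m\in P\cap N$, $n\in Q\cap N$; the cosets of $N$ you have not covered, and the other Sylow subgroups of $G$, interact with $R$ in ways that your prime-local picture does not see. In particular, $R$ need not be pronilpotent, so its Sylow subgroups do not sit inside it as direct factors, and knowing that each Sylow contribution is ``procyclic modulo finite'' says nothing about $R$ globally. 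Your closing paragraph essentially concedes that this interlocking is unresolved; without a mechanism to control it the plan does not close.

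The paper avoids a bottom-up assembly entirely and proceeds by structural reduction. The first key point (Lemma~\ref{useful}) is that under the hypothesis every coprime commutator $x$ has $[G:N_G(\langle x\rangle)]<\infty$; from this and \cite{DMS_2021} one gets that $G$ is pronilpotent-by-finite-by-pronilpotent, hence finite-by-prosoluble and virtually metapronilpotent. An induction on the index then reduces to $G$ metapronilpotent, where $H=\gamma_\infty(G)$ has a pronilpotent complement $A$ (a system normalizer) and one may pass to $H/\Phi(H)$ to make $H$ abelian with finite elementary Sylow subgroups. Only in this very controlled situation does a Baire argument run --- applied to the set of coprime pairs in $H\times A$ as a whole, not prime-by-prime --- and even then it requires an auxiliary chain (Lemma~\ref{000} $\to$ Lemma~\ref{3} $\to$ Proposition~\ref{important}, the middle step using the Engel-sink theorem of \cite{AS_2019}) to handle the building blocks $G=HA$ with $A$ procyclic, then $A$ pronilpotent acting on a pro-$p$ group, then $A$ and $H$ both pronilpotent of coprime order. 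The reductions are what supply the global structure your approach lacks: they let the Baire step produce an open subgroup of all of $\gamma_\infty(G)$ inside a single procyclic group, rather than infinitely many local pieces that must somehow be glued.
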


The theorem admits a converse: if $G$ is a profinite group such that $\gamma_\infty(G)$ is finite-by-procyclic, then the set of coprime commutators of $G$ is covered by countably many procyclic subgroups. This is because a profinite group is covered by countably many procyclic subgroups if and only if it is finite-by-procyclic (see \cite[Proposition 2.12]{AS_2017}).

Observe that if the set of coprime commutators of $G$ is contained in a union of only finitely many, say $m$, procyclic subgroups, then, in view of \cite{AS_2014}, there is a finite normal subgroup $N\leq\gamma_\infty(G)$ such that the order of $N$ is bounded in terms of $m$ only and $\gamma_\infty(G)/N$ is procyclic. However, in general it is not true that under the hypothesis of Theorem \ref{main} the set of coprime commutators of $G$ is covered by finitely many procyclic subgroups. 

Indeed, let $p$ be an odd prime, $C_p$ the cyclic group of order $p$ and $\Bbb Z_p$ the additive group of $p$-adic integers. Set $M=C_p\times \Bbb Z_p$ and observe that $M$ cannot be covered by finitely many procyclic subgroups while it is covered by countably many ones. Let $\alpha$ be the automorphism of $M$ such that $x^\alpha=x^{-1}$ for every $x\in M$ and let $G=M\langle\alpha\rangle$. Then every element of $M$ is a coprime commutator in $G$ and $\gamma_\infty(G)=M$. Thus, $G$ is as in Theorem \ref{main} while $\gamma_\infty(G)$ cannot be covered by finitely many procyclic subgroups.

The following corollary of Theorem \ref{main} follows shortly.

\begin{corollary}\label{cor} Let $G$ be a profinite group and assume that the set of coprime commutators of $G$ is contained in a union of countably many procyclic subgroups. Then $G$ is finite-by-pronilpotent-by-abelian.
\end{corollary}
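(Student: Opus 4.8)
\textbf{Proof plan for Corollary~\ref{cor}.} The plan is to read the corollary off from Theorem~\ref{main} by a short extension‑rearrangement argument; the only point that needs a little care is to upgrade ``$\gamma_\infty(G)$ is finite‑by‑procyclic'' to the same statement with a finite subgroup that is normal in the whole of $G$. Write $K=\gamma_\infty(G)$. First I would record two routine facts: since $G/\overline{[G,G]}$ is abelian, hence pronilpotent, we have $K\le G'$ (where $G'=\overline{[G,G]}$); and since $G'/K$ is a closed subgroup of the pronilpotent group $G/K$, it is itself pronilpotent, so $\gamma_\infty(G')\le K$. By Theorem~\ref{main}, $K$ is finite‑by‑procyclic, so $K'$ is finite and $K/K'$ is an abelian finite‑by‑procyclic profinite group. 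Such a group has a \emph{characteristic} finite subgroup with procyclic quotient: decomposing $K/K'$ as the Cartesian product of its Sylow subgroups, each Sylow subgroup is a finitely generated abelian pro‑$p$ group that is finite‑by‑procyclic, hence is either procyclic, or a direct product of a copy of $\mathbb{Z}_p$ with a nontrivial finite group, or a finite non‑cyclic group, and in each of these cases there is an obvious characteristic subgroup (respectively the trivial one, the torsion subgroup, or the whole factor) with procyclic quotient; moreover only finitely many of these characteristic subgroups are nontrivial, and each is finite. Pulling this subgroup back along $K\to K/K'$ yields a subgroup $N$ which is characteristic in $K$ (hence normal in $G$), finite, contained in $K$, and with $K/N$ procyclic.

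Next I would pass to $\overline G=G/N$. Here $\gamma_\infty(\overline G)=K/N=:\overline K$ is procyclic and normal in $\overline G$, while $\overline G'=G'/N$ because $N\le K\le G'$. Since the automorphism group of a procyclic profinite group is abelian, conjugation embeds $\overline G/C_{\overline G}(\overline K)$ into an abelian group; as $C_{\overline G}(\overline K)$ is closed, this gives $\overline G'\le C_{\overline G}(\overline K)$, and since $\overline K\le\overline G'$ we conclude $\overline K\le Z(\overline G')$. On the other hand $\overline G'/\overline K$ is a closed subgroup of $\overline G/\overline K\cong G/K$, hence pronilpotent. Now a central extension of a pronilpotent profinite group is pronilpotent --- for every open normal subgroup $V$ of $\overline G'$ the finite quotient $\overline G'/V$ is central‑by‑nilpotent, hence nilpotent --- so $\overline G'$ is pronilpotent.

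Finally, $\overline G'=G'/N$ being pronilpotent means that in the chain $N\trianglelefteq G'\trianglelefteq G$ the subgroup $N$ is finite, $G'/N$ is pronilpotent and $G/G'$ is abelian; equivalently, $G'$ is finite‑by‑pronilpotent and $G/G'$ is abelian, so $G$ is finite‑by‑pronilpotent‑by‑abelian. I do not expect any serious obstacle here: granted Theorem~\ref{main} the argument is entirely elementary, and the one spot that deserves a careful line or two is the structural remark in the first paragraph that lets the finite part $N$ be taken normal in $G$.
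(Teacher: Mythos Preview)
Your proof is correct and follows essentially the same route as the paper: obtain a finite subgroup $N$ that is characteristic in $\gamma_\infty(G)$ with procyclic quotient (the paper packages this as Lemma~\ref{subgrchar}), use that the automorphism group of a procyclic group is abelian, and then conclude via the fact that a central extension of a pronilpotent group is pronilpotent (the paper packages this last step as Lemma~\ref{centralizer_in_theFitting}). The only cosmetic difference is that the paper takes the middle term of the series to be the full preimage $C$ of $C_{G/N}(\gamma_\infty(G)/N)$, whereas you take the smaller subgroup $G'$; since $G'\le C$ both choices yield a valid finite--pronilpotent--abelian series.
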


Thus, a relatively mild assumption on the set of coprime commutators has strong implications for the structure of a profinite group.

\section{Preliminaries}
Our notation and terminology for profinite  groups is standard; see, for example,  \cite{rib-zal} and  \cite{wil}.  By a subgroup of a profinite group we always mean a closed subgroup. A subgroup (topologically) generated by a subset $S$ is denoted by $\langle S\rangle$. Throughout, homomorphisms of profinite groups are assumed to be continuous. Recall that centralizers are closed subgroups, while commutator subgroups $[B,A]=\langle [b,a]\mid b\in B,\;a\in A\rangle$ are the closures of the corresponding abstract commutator subgroups. As usual, the Frattini subgroup of a group $G$ is denoted by $\Phi(G)$.  

In the first lemma we  collect some standard facts about coprime actions, which are immediate from the corresponding results for finite groups; see for instance \cite[Lemmas 4.28 and 4.29]{Isaacs} and \cite[Theorems 5.2.3,  6.2.2(iv) and 5.1.4]{Gorenstein}.
\begin{lemma}\label{lem:standard}
    Let $A$ be a profinite group acting  on a profinite group~$G$ and suppose that $(|G|,|A|)=1$. Then
    \begin{enumerate}
        \item [\textup{(i)}] $G=[G,A]C_G(A)$ and if $G$ is abelian, then $G=[G,A]\times C_G(A)$;
          \item [\textup{(ii)}] $[G,A,A]=[G,A]$;
            \item [\textup{(iii)}] $C_{G/N}(A)=NC_G(A)/N$ for any $A$-invariant normal subgroup $N$ of $G$;
            \item[\textup{(iv)}]  If  $A$  acts faithfully on $G$, then $A$ acts faithfully on $G/\Phi(G)$ as well. 
    \end{enumerate}
\end{lemma}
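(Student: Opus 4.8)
The plan is to deduce each of the four assertions from the corresponding statement for finite groups by passing to the inverse limit. The structural fact needed for this is that the $A$-invariant open normal subgroups of $G$ form a base of neighbourhoods of $1$. Indeed, given an open normal subgroup $M$ of $G$, the induced action of $A$ on the finite group $G/M$ is described by a continuous homomorphism from $A$ to the (finite) automorphism group of $G/M$, so its kernel $A_M$ is open of finite index in $A$; since $A_M$ stabilises $M$, the $A$-orbit of $M$ is finite, and the intersection of this orbit is an $A$-invariant open normal subgroup of $G$ contained in $M$. Hence $G=\varprojlim_N G/N$ over the $A$-invariant open normal subgroups $N$, each $G/N$ is finite, and $A$ acts on it through a quotient of order dividing $|A|$ and therefore coprime to $|G/N|$, so that the quoted finite-group results are available in every $G/N$.

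I would handle part (iii) first, as it is the crucial lifting statement used by the others. The inclusion $NC_G(A)/N\subseteq C_{G/N}(A)$ is immediate; the content is the surjectivity of the map $C_G(A)\to C_{G/N}(A)$. If $gN\in C_{G/N}(A)$, then $g^{-1}g^{a}\in N$ for every $a\in A$, so $A$ acts on the compact coset $gN$; for each $A$-invariant open normal subgroup $M$ of $G$ the finite-group version of (iii), applied in $G/M$, produces an $A$-fixed point in the finite $A$-set $gNM/M$, and these fixed-point sets form an inverse system of nonempty finite sets. Any element of its (nonempty) inverse limit lies in $gN\cap C_G(A)$, which is what we need. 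Once (iii) is available, the image of $C_G(A)$ in each $G/N$ is $C_{G/N}(A)$, while the image of $[G,A]$ in $G/N$ is $[G/N,A]$; the finite-group identities $G/N=[G/N,A]\,C_{G/N}(A)$ and $[G/N,A,A]=[G/N,A]$ then show that $[G,A]\,C_G(A)$ is dense in $G$ and that $[G,A,A]$ is dense in $[G,A]$. Since $[G,A]$ is a closed normal subgroup of $G$, the product $[G,A]\,C_G(A)$ is closed (it is the full preimage in $G$ of a closed subgroup of $G/[G,A]$), and $[G,A,A]$ is closed by the standing convention that commutator subgroups denote closures; being closed and dense, these subgroups coincide with $G$ and with $[G,A]$ respectively, giving (i) and (ii).

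For part (iv), I would argue as follows: if $a\in A$ acts trivially on $G/\Phi(G)$, fix an $A$-invariant open normal subgroup $M$ of $G$. The image of $\Phi(G)$ in $G/M$ is contained in $\Phi(G/M)$, so $a$ acts trivially on $(G/M)/\Phi(G/M)$; as $a$ acts on the finite group $G/M$ with order coprime to $|G/M|$, the finite-group statement forces $a$ to act trivially on $G/M$. Since this holds for every such $M$, and the $M$ intersect in $\{1\}$, the element $a$ acts trivially on $G$, and faithfulness of the action yields $a=1$.

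\textbf{The main obstacle.} The only genuinely delicate point is the limit step inside (iii): one must be certain that a compatible family of $A$-fixed cosets modulo the various $M$ actually arises from a single point of $G$ fixed by $A$. This is guaranteed by the compactness of $G$ — an inverse limit of nonempty finite sets is nonempty — but it depends essentially on the hypothesis $(|G|,|A|)=1$, since the underlying finite-group fixed-point statement fails for non-coprime actions, so no purely topological argument can substitute for it. Everything else — checking that the subgroups in question are closed, and that $[G,A]$, $C_G(A)$ and $\Phi(G)$ map correctly under the projections $G\to G/N$ — is routine.
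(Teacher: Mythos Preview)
Your approach is correct and is exactly what the paper intends: the paper gives no proof at all, simply recording that the assertions are ``immediate from the corresponding results for finite groups'' (citing Isaacs and Gorenstein), and your inverse-limit reduction is the standard way to make that precise. The only wrinkle is the justification in your first paragraph that the $A$-orbit of $M$ is finite --- as written it presupposes an action of $A$ on $G/M$, which is what you are trying to arrange; the clean fix is to observe that continuity of the action and compactness of $M$ force the stabiliser of $M$ in $A$ to be open (or, equivalently, pass to the profinite semidirect product $G\rtimes A$ and intersect its open normal subgroups with $G$).
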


\begin{lemma}\label{fatto1} Let $c$ be a positive integer and $G$ be a nilpotent group of class at most $c$. Let $x\in G$ be an element such that $x^n\in Z(G)$ for some positive integer $n$. Then $x$ commutes with $y^{n^{c-1}}$ for any $y\in G$.
\end{lemma}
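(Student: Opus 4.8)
The plan is to induct on the nilpotency class $c$ of $G$. If $c=1$ then $G$ is abelian, $n^{c-1}=n^{0}=1$, and the assertion is trivial. So assume $c\geq 2$ and that the lemma holds for all nilpotent groups of class at most $c-1$. Passing to $\bar G=G/Z(G)$, which is nilpotent of class at most $c-1$, the image $\bar x$ of $x$ satisfies $\bar x^{\,n}=\overline{x^{n}}=1$, hence in particular $\bar x^{\,n}\in Z(\bar G)$. By the inductive hypothesis, $\bar x$ commutes with $\bar y^{\,n^{c-2}}$ for every $y\in G$; equivalently, writing $z=y^{n^{c-2}}$, we have $[x,z]\in Z(G)$.

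It then remains to upgrade $[x,z]\in Z(G)$ to $[x,z^{n}]=1$, which suffices because $z^{n}=y^{n^{c-1}}$. For this I would use the elementary identities $[ab,c]=[a,c]^{b}[b,c]$ and $[a,bc]=[a,c]\,[a,b]^{c}$, which, when $[a,b]$ is central, yield by a one-line induction $[a^{k},b]=[a,b]^{k}=[a,b^{k}]$ for every positive integer $k$. Applying this with $a=x$, $b=z$, $k=n$, and using that $[x,z]$ is central, we obtain $[x,z^{n}]=[x,z]^{n}=[x^{n},z]$, and the right-hand side is trivial since $x^{n}\in Z(G)$. Therefore $x$ commutes with $z^{n}=y^{n^{c-1}}$, which completes the induction.

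The argument is essentially routine: there is no genuine obstacle beyond keeping track of the exponents (the key point being $n^{c-2}\cdot n=n^{c-1}$) and observing that the commutator identities invoked require only the \emph{centrality} of $[x,z]$, which is exactly what the passage to $G/Z(G)$ provides.
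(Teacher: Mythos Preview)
Your proof is correct and follows essentially the same route as the paper's: induct on the class, pass to a quotient of class at most $c-1$, apply the inductive hypothesis to obtain $[x,z]\in Z(G)$ for $z=y^{n^{c-2}}$, and then use $[x,z^n]=[x,z]^n=[x^n,z]=1$. The only cosmetic difference is that you quotient by $Z(G)$ (upper central series) whereas the paper quotients by $\gamma_c(G)$ (lower central series); both give a group of class at most $c-1$ and lead to the same conclusion that $[x,z]$ is central.
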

\begin{proof}
Without loss of generality, given $y\in G$, we can assume that $G=\langle x,y\rangle$.  We argue by induction on the nilpotency class $c$ of $G$. If $c=1$, then $G$ is abelian and there is nothing to prove. Assume  $c\geq 2$ and let $z=y^{n^{c-2}}$. By inductive hypothesis,  the image of $z$ in the quotient $G/\gamma_c(G)$ commutes with the image of $x$, and so $[x,z]\in \gamma_c(G)$. Since $\gamma_c(G)$ is central in $G$, we deduce that $z\in Z_2(G)$. By combining this fact with the hypothesis that $x^n\in Z(G)$ we deduce that
$1=[x^n,z]=[x,z]^n=[x,z^n]=[x,y^{n^{c-1}}]$ and the result follows.\end{proof}

\begin{lemma}\label{fatto2} Let $G$ be a nilpotent profinite group. Suppose that $G/Z(G)$ is finite-by-procyclic. Then $G/Z(G)$ is finite.
\end{lemma}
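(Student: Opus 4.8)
The plan is to exploit the procyclic generator modulo the finite part together with Lemma~\ref{fatto1}. Write $c$ for the nilpotency class of $G$. Since $G/Z(G)$ is finite-by-procyclic, choose a normal subgroup $N$ of $G$ with $Z(G)\le N$, with $N/Z(G)$ finite, and with $G/N$ procyclic; this is the preimage in $G$ of a finite normal subgroup of $G/Z(G)$ with procyclic quotient. Put $n=|N/Z(G)|$, so that $m^{n}\in Z(G)$ for every $m\in N$, and pick $x\in G$ whose image $xN$ topologically generates $G/N$. Then $\langle x\rangle N$ is a subgroup and $G=\overline{\langle x\rangle N}$.

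The key step is to show that $x^{\,n^{c-1}}\in Z(G)$. Fix $m\in N$. Since $m^{n}\in Z(G)$ and $G$ has class at most $c$, Lemma~\ref{fatto1} (applied with $m$ in the role of the distinguished element and with $n$ as above) tells us that $m$ commutes with $x^{\,n^{c-1}}$. As $m$ was an arbitrary element of $N$, we get $x^{\,n^{c-1}}\in C_G(N)$. On the other hand $x^{\,n^{c-1}}$ trivially commutes with $x$. Hence $C_G(x^{\,n^{c-1}})$ is a closed subgroup containing both $N$ and $x$, so it contains $\overline{\langle x\rangle N}=G$; that is, $x^{\,n^{c-1}}\in Z(G)$.

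It remains to read off finiteness. Since $x^{\,n^{c-1}}\in Z(G)\le N$, the element $xN$ has order dividing $n^{c-1}$ in $G/N$. A procyclic group that is topologically generated by an element of finite order is already cyclic (the subgroup generated by that element is finite, hence closed), so $G/N$ is finite. Consequently $|G/Z(G)|=|G/N|\cdot|N/Z(G)|$ is finite, as required.

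The only genuine point is the observation that $x^{\,n^{c-1}}$ lies not merely in $C_G(N)$ but in $Z(G)$: this is what upgrades the ``bounded exponent modulo $N$'' information supplied by Lemma~\ref{fatto1} into actual finiteness of $G/N$. The remaining ingredients (that a procyclic group generated by a torsion element is finite, and the bookkeeping with closures needed to see $G=\overline{\langle x\rangle N}$) are routine, and the edge case $c\le 1$, where $G$ is abelian, is trivial.
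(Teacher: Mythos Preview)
Your proof is correct and follows essentially the same approach as the paper's: both arguments use Lemma~\ref{fatto1} to show that the procyclic generator modulo the finite part has finite order modulo $Z(G)$, then conclude. The only cosmetic difference is that the paper picks generators $a_1,\dots,a_s$ of the finite part with individual orders $n_i$ and uses the exponent $(n_1\cdots n_s)^{c-1}$, whereas you work with all of $N$ at once via $n=|N/Z(G)|$; the core idea is identical.
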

\begin{proof}
Since $G/Z(G)$ is finite-by-procyclic, there are elements $a_1,\ldots,a_s, b$ in $G$  such that $G=Z(G)\langle a_1,\ldots,a_s, b\rangle$ and $a_1,\ldots,a_s$ have finite order modulo $Z(G)$. Therefore there exist positive integers $n_1,\ldots,n_s$ such that $a_i^{n_i}\in Z(G)$ for  $1\leq i\leq s$. Let $c$ be the nilpotency class of $G$. In view of Lemma~\ref{fatto1} we know that  $a_i$ commutes with $b^{n_i^{c-1}}$ for any $i$. Then the element $b^{(n_1\ldots n_s)^{c-1}}$ commutes with each $a_i$ and with $b$, so $b^{(n_1\ldots n_s)^{c-1}}\in Z(G)$.  Note that $a_1,\ldots,a_s$ and $b$ have all finite order modulo $Z(G)$.  Therefore  $G/Z(G)$ is finite because it is nilpotent and generated by finitely many elements of finite order. 
\end{proof}

\begin{lemma}\label{subgrchar}
Let $G$ be a finite-by-procyclic profinite group. Then $G$ has a finite characteristic subgroup $N$ such that $G/N$ is procyclic.
\end{lemma}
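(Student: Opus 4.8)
The plan is to reduce to the abelian case and then exploit the Sylow decomposition of an abelian profinite group.

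Pick a finite normal subgroup $F\le G$ with $G/F$ procyclic. Since $G/F$ is then abelian we have $G'\le F$, so $G'$ is finite; of course $G'$ is characteristic in $G$. Passing to $\bar G=G/G'$, we still have a finite-by-procyclic group, because $F/G'$ is finite and $\bar G/(F/G')\cong G/F$ is procyclic, and now $\bar G$ is abelian. It suffices to produce a finite characteristic subgroup $\bar N\le\bar G$ with $\bar G/\bar N$ procyclic: its full preimage $N$ in $G$ is characteristic (being the preimage of a characteristic subgroup under the characteristic quotient map $G\to G/G'$), is finite (an extension of $G'$ by $\bar N$, both finite), and $G/N\cong\bar G/\bar N$ is procyclic. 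So from now on I assume $G$ abelian.

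Write $G=\prod_p G_p$ as the direct product of its Sylow subgroups; each $G_p$ is characteristic in $G$. If $F_p$ denotes the Sylow $p$-subgroup of $F$, then $G/F=\prod_p G_p/F_p$, so every $G_p/F_p$ is procyclic, and since $F$ is finite we have $F_p=1$ -- hence $G_p$ itself procyclic -- for all but finitely many $p$. For each of the finitely many remaining primes, $G_p$ is a finitely generated abelian pro-$p$ group, so $G_p\cong\Bbb Z_p^d\times T_p$ with $T_p$ finite and equal to the torsion subgroup of $G_p$; as $F_p$ is torsion it lies in $T_p$, so $\Bbb Z_p^d$ embeds into the procyclic group $G_p/F_p$, forcing $d\le 1$. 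Thus each exceptional $G_p$ is either finite or isomorphic to $\Bbb Z_p\times T_p$. Now set $N_p=1$ if $G_p$ is procyclic, let $N_p$ be the torsion subgroup of $G_p$ if $G_p$ is infinite, and let $N_p=G_p$ if $G_p$ is finite but not procyclic. In the last two cases $p$ is one of the finitely many exceptional primes, so $N_p=1$ for almost all $p$ and every $N_p$ is finite; hence $N:=\prod_p N_p$ is a finite subgroup. It is characteristic, since each $N_p$ (the trivial subgroup, a torsion subgroup, or a full Sylow subgroup) is characteristic in $G_p$ and each $G_p$ is characteristic in $G$. Finally $G/N=\prod_p G_p/N_p$, and each factor is procyclic ($G_p$ itself, or $G_p/T_p\cong\Bbb Z_p$, or trivial); being a product over distinct primes of procyclic pro-$p$ groups, $G/N$ is a quotient of $\hat{\Bbb Z}=\prod_p\Bbb Z_p$, hence procyclic.

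The argument is mostly bookkeeping, and the step that needs genuine care is the structural claim about the exceptional Sylow subgroups: that a finitely generated abelian pro-$p$ group whose quotient by a finite subgroup is procyclic has torsion-free rank at most $1$, and so is finite or $\Bbb Z_p\times(\text{finite})$. The remaining ingredients -- the Sylow decomposition of an abelian profinite group, the fact that torsion subgroups of finitely generated abelian pro-$p$ groups are finite and characteristic, and the fact that a product over distinct primes of procyclic pro-$p$ groups is procyclic -- are standard.
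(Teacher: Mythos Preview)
Your argument is correct. Both you and the paper open with the same reduction: since $G'\le F$ is finite and characteristic, one may pass to the abelian quotient $G/G'$. From that point the two proofs diverge. The paper simply declares that, because $G/G'$ is a finitely generated abelian profinite group, its torsion elements form a finite subgroup, and then takes $N$ to be the set of all torsion elements of $G$. You instead decompose the abelian group as $\prod_p G_p$, isolate the finitely many primes at which $G_p$ fails to be procyclic, invoke the structure theorem for finitely generated $\mathbb{Z}_p$-modules at those primes, and assemble $N$ from the torsion (or full) parts of just those exceptional Sylow factors.

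Your route is longer but more robust: the paper's blanket assertion that the torsion part of a finitely generated abelian profinite group is finite is not literally true without further qualification (the procyclic group $\prod_p C_p$ has infinite, dense torsion), so your prime-by-prime construction sidesteps a step that in the paper's version deserves more care. Note also that your $N$ and the paper's $N$ need not coincide as subgroups---yours throws away torsion only at the finitely many bad primes, whereas the paper's is meant to be the full torsion part---though either choice, once properly justified, yields the conclusion.
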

\begin{proof} If $G$ is finite, the result is obvious, so assume that $G$ is infinite. Since $G$ is finite-by-procyclic, there exists a finite normal subgroup $M$ in $G$ such that $G/M$ is procyclic. Hence $G'\leq M$ and it is finite as well. Since $G$ is finitely generated, the quotient $G/G'$ is a finitely generated abelian group and so the torsion elements of $G/G'$ form a finite subgroup. It follows that the torsion elements of $G$ form a finite subgroup, which we denote by $N$. Observe that $N$ is characteristic in $G$ and $G/N$ is procyclic. The proof is complete.
\end{proof}

For a profinite group $G$ we denote by $\pi(G)$ the set of prime divisors of the orders of finite continuous images of $G$. For an element $x\in G$ we write $\pi(x)$ to denote $\pi(\langle x\rangle)$. If $\pi(G)\subseteq\pi$, then we say that $G$ is a pro-$\pi$ group. Recall that Sylow theorems hold for $p$-Sylow subgroups of a profinite group (see, for example, \cite[Ch.\ 2]{wil}). If $H$ is a subgroup of $G$ such that $\pi(H)\subseteq\pi$, we say that $H$ is a pro-$\pi$ subgroup, or just a $\pi$-subgroup.

\begin{lemma}\label{abelian_fbp}
Let $G$ be an abelian profinite group and let $K$ be a Hall subgroup of $G$. If $K$ and $G/K$ are both finite-by-procyclic, then so is $G$.
\end{lemma}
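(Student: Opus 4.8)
The plan is to use the fact that an abelian profinite group is pronilpotent, hence decomposes as the direct product of its Sylow subgroups, so that the Hall subgroup $K$ becomes a direct factor. Concretely, write $G=\prod_p G_p$, where $G_p$ denotes the (normal) Sylow $p$-subgroup of $G$, and put $\pi=\pi(K)$. Since $K$ is a pro-$\pi$ subgroup, $K\leq\prod_{p\in\pi}G_p$; on the other hand, because $K$ is a Hall subgroup we have $\pi(G/K)\cap\pi=\emptyset$, so the image of $\prod_{p\in\pi}G_p$ in $G/K$ is a pro-$\pi$ subgroup of a pro-$\pi'$ group, hence trivial. Therefore $K=\prod_{p\in\pi}G_p$, and setting $H=\prod_{p\notin\pi}G_p$ we obtain $G=K\times H$ with $H\cong G/K$ and $\pi(K)\cap\pi(H)=\emptyset$.

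Next I would pass to finite quotients. By hypothesis both $K$ and $H$ are finite-by-procyclic, so by Lemma~\ref{subgrchar} there are finite (characteristic) subgroups $K_0\leq K$ and $H_0\leq H$ with $K/K_0$ and $H/H_0$ procyclic. Then $N=K_0\times H_0$ is a finite normal subgroup of $G$ and
\[
G/N\;\cong\;(K/K_0)\times(H/H_0).
\]
Here $K/K_0$ is a procyclic pro-$\pi$ group and $H/H_0$ is a procyclic pro-$\pi'$ group with $\pi\cap\pi'=\emptyset$. Since an abelian profinite group is procyclic if and only if each of its Sylow subgroups is procyclic, and for every prime $p$ the Sylow $p$-subgroup of $(K/K_0)\times(H/H_0)$ coincides with the corresponding Sylow subgroup of one of the two factors (the other contributing nothing at $p$), it follows that $G/N$ is procyclic. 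Hence $G$ is finite-by-procyclic, as required.

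I expect the only point requiring care to be the structural claim that a Hall subgroup of an abelian (more generally, pronilpotent) profinite group is a direct factor with complement isomorphic to the quotient; once the splitting $G=K\times H$ is available, everything else is routine. An alternative, essentially equivalent, route avoids naming $H$ explicitly: after choosing the finite subgroup $N$ as above, $G/N$ is an abelian profinite group which is an extension of a procyclic $\pi'$-group by a procyclic $\pi$-group with $\pi\cap\pi'=\emptyset$; such an extension splits, and the resulting product of two procyclic groups of coprime supernatural order is again procyclic.
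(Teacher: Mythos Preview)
Your proof is correct and follows essentially the same route as the paper: both split $G$ as a direct product $K\times H$ with $(|K|,|H|)=1$, choose a finite subgroup $N$ making both factors procyclic modulo $N$, and conclude that $G/N$, being a direct product of two procyclic groups of coprime supernatural order, is procyclic. The only difference is cosmetic: you justify the splitting $G=K\times H$ explicitly via the Sylow decomposition of a pronilpotent group, whereas the paper simply asserts the existence of the complement.
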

\begin{proof}
Since $K$ is a Hall subgroup, $G$ contains a subgroup $L$ such that $G=K\times L$ and $(|K|,|L|)=1$. Clearly, $G$ contains a finite subgroup $N$ such that the images in $G/N$ of both $K$ and $L$ are procyclic. Since $(|K|,|L|)=1$, it follows that $G/N$ is a direct product of two procyclic subgroups of coprime orders. Therefore $G/N$ is procyclic and the lemma follows.
\end{proof}
 
 For a profinite group $G$, we recall that   $\gamma_\infty(G)$ is  the intersection of the terms of the lower central series of $G$.
It is clear that a finite group $G$ is nilpotent if and only if $\gamma_\infty(G)=1$. Therefore a profinite group $G$ is pronilpotent if and only if $\gamma_\infty(G)=1$. By a well-known property of finite groups $\gamma_\infty(G)$ is generated by all commutators $[x,y]$, where $x$ and $y$ have mutually coprime orders (see for example \cite[Theorem 2.1]{Pavel1}). We also recall  that the Fitting subgroup ~$F(G)$ of a profinite group $G$ is the maximal pronilpotent normal subgroup of~$G$.

\begin{lemma}\label{centralizer_in_theFitting}
Let $G$ be a profinite group. Then the centralizer of $\gamma_\infty(G)$ is contained in the Fitting subgroup $F(G)$.
\end{lemma}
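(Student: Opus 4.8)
The plan is to put $R=\gamma_\infty(G)$ and $C=C_G(R)$ and to prove that $C$ is a pronilpotent normal subgroup of $G$; since $F(G)$ is by definition the largest pronilpotent normal subgroup, this will give $C\le F(G)$. That $C$ is normal in $G$ is immediate: $R$ is characteristic in $G$, and the centralizer of a normal subgroup is normal. So the whole content lies in showing that $C$ is pronilpotent.

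The first observations are that $C\cap R=Z(R)$ (an element of $R$ lies in $C$ exactly when it centralizes $R$) and that $Z(R)\le Z(C)$ (indeed $Z(R)$ centralizes $R$, so $Z(R)\le C$, and $C$ centralizes $R\supseteq Z(R)$). Next, by the isomorphism theorem $C/Z(R)=C/(C\cap R)\cong CR/R$ (note $CR$ is closed, being the product of a closed subgroup and a closed normal subgroup), and the latter is a closed subgroup of the pronilpotent group $G/R$, hence itself pronilpotent. Therefore $\gamma_\infty(C)\le Z(R)\le Z(C)$.

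To conclude I would use the identity $\gamma_\infty(H)=[\gamma_\infty(H),H]$, valid for every profinite group $H$: in each finite quotient $H/U$ the lower central series stabilizes, so $\gamma_\infty(H)U=[\gamma_\infty(H),H]U$ for every open normal $U$, and intersecting over $U$—using that $[\gamma_\infty(H),H]$ is closed—yields the identity. Applying it with $H=C$ and combining with $\gamma_\infty(C)\le Z(C)$ gives $\gamma_\infty(C)=[\gamma_\infty(C),C]=1$, so $C$ is pronilpotent, as required. The argument is short, and the one point needing a little care—really the crux—is the passage from ``$C/Z(R)$ pronilpotent with $Z(R)$ central in $C$'' to ``$C$ pronilpotent'', i.e.\ that a central extension of a pronilpotent profinite group is again pronilpotent; this is exactly what the identity $\gamma_\infty(H)=[\gamma_\infty(H),H]$ delivers, but one must justify that identity for profinite (not merely finite) $H$, which is where the reduction to finite quotients together with closedness of commutator subgroups enters. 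Everything else is bookkeeping.
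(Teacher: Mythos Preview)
Your argument is correct and follows essentially the same route as the paper's proof: both set $C=C_G(\gamma_\infty(G))$, observe that $\gamma_\infty(G)\cap C\le Z(C)$ so that $C$ is central-by-pronilpotent, and then conclude that $C$ is pronilpotent and hence contained in $F(G)$. The only difference is cosmetic: the paper passes directly to $C/Z(C)$ and asserts without further comment that central-by-pronilpotent implies pronilpotent, whereas you pass through $C/Z(R)$ and spell out that implication via the identity $\gamma_\infty(H)=[\gamma_\infty(H),H]$.
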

\begin{proof}
Set $C=C_G(\gamma_\infty(G))$ and note that  $\gamma_\infty(G)\cap C$ is contained in $Z(C)$. Thus $\gamma_\infty(C/Z(C))=1$ and $C/Z(C)$ is pronilpotent. It follows that $C$ is pronilpotent too and so  is contained in $F(G)$. 
\end{proof}

\begin{lemma}\label{fbp_implies_pbf}
Let $G$ be  a profinite group and assume that $G$ is  finite-by-pronilpotent. Then $G$  is  pronilpotent-by-finite. \end{lemma}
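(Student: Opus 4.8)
The plan is to exhibit a pronilpotent normal subgroup of finite index in $G$, and the natural candidate is the Fitting subgroup $F(G)$. The first step is to observe that $\gamma_\infty(G)$ is finite: if $N$ is a finite normal subgroup of $G$ with $G/N$ pronilpotent, then, since $\gamma_\infty(G)$ is the smallest normal subgroup of $G$ with pronilpotent quotient, we get $\gamma_\infty(G)\le N$, and hence $\gamma_\infty(G)$ is finite.

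The second step is to deduce that $C_G(\gamma_\infty(G))$ has finite index in $G$. Once $\gamma_\infty(G)$ is known to be finite, its automorphism group $\mathrm{Aut}(\gamma_\infty(G))$ is finite, and the conjugation action of $G$ on $\gamma_\infty(G)$ gives a continuous homomorphism $G\to\mathrm{Aut}(\gamma_\infty(G))$ whose kernel is precisely $C_G(\gamma_\infty(G))$; therefore $G/C_G(\gamma_\infty(G))$ is finite.

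Finally, I would invoke Lemma~\ref{centralizer_in_theFitting}, which states that $C_G(\gamma_\infty(G))\le F(G)$. Combined with the previous step, this shows that $F(G)$ has finite index in $G$, and since $F(G)$ is by definition a pronilpotent normal subgroup of $G$, it follows that $G$ is pronilpotent-by-finite. I do not anticipate any real obstacle here: the argument is essentially a two-line consequence of Lemma~\ref{centralizer_in_theFitting}, the only points requiring a little care being that $\gamma_\infty(G)$ is genuinely finite (so that the index estimate for $C_G(\gamma_\infty(G))$ goes through) and the correct application of Lemma~\ref{centralizer_in_theFitting}; everything else is formal.
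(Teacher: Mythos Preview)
Your proof is correct and follows essentially the same line as the paper's: both arguments exhibit an open pronilpotent subgroup as the centralizer of a finite normal subgroup. The only cosmetic difference is that the paper works directly with the given finite $N$ and observes that $C_G(N)$ is open (since $G/C_G(N)$ embeds in $\mathrm{Aut}(N)$) and pronilpotent (being central-by-pronilpotent, as $C_G(N)/Z(N)$ embeds in $G/N$), whereas you first pass to $\gamma_\infty(G)\le N$ and then invoke Lemma~\ref{centralizer_in_theFitting} to get the pronilpotency for free.
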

\begin{proof}
Let $N$ be a finite normal subgroup of $G$ such that $G/N$ is pronilpotent. Observe that  the centralizer $C_G(N)$ is open  in $G$ since $G/C_G(N)$ acts as a permutation group on $N$. Moreover $C_G(N)$  is pronilpotent. The result follows.
\end{proof}

For an element $x$ of a group $G$, an Engel sink $E(x)$ of $x$ is a set such that for every $g\in G$ all sufficiently  long commutators $[g,x,x,\ldots, x]$ belong to $E(x)$, that is, for every $g\in G$ there is a positive integer $n(g,x)$ such that $[g,_{n} x]\in E(x)$ for all $n\geq n(g,x)$. 

\begin{lemma}\label{cyclic_sinks}Let $G$ be a finite group such that $G=PA$, where $P$ is a normal $p$-subgroup and $A$ is a nilpotent $p'$-subgroup. Assume also that for any $a\in A$ the subgroup $[P,a]$ is cyclic. Then every element of $G$ admits an Engel sink generating   a cyclic subgroup.
\end{lemma}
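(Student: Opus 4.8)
The plan is to exhibit, for each $x\in G$, a concrete Engel sink of $x$ that generates a cyclic subgroup, namely $D:=[P,x_{p'}]$, where $x_{p'}$ denotes the $p'$-part of $x$. Since $x_{p'}\in\langle x\rangle$, it commutes with $x$ and normalizes every $x$-invariant subgroup; combined with the normality of $P$ in $G$, this shows that $D$ is $x$-invariant and normal in $P$, so $x$ acts on the $p$-group $P/D$. It then remains to check two things: that $D$ is cyclic, and that $[h,_{n} x]\in D$ for every $h\in G$ once $n$ is large enough.

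For the cyclicity of $D$ the subtle point is that the hypothesis controls $[P,a]$ only for $a\in A$, while $x_{p'}$ itself need not lie in $A$. However, $G$ is solvable (being built from the $p$-group $P$ and the nilpotent group $G/P\cong A$) and $A$ is a Hall $p'$-subgroup of $G$, so the $p'$-subgroup $\langle x_{p'}\rangle$ is conjugate into $A$; writing $x_{p'}=a^{t}$ with $a\in A$ and $t\in G$, and using that $P$ is normal, we obtain $D=[P,x_{p'}]=[P,a^{t}]=[P,a]^{t}$, a conjugate of the cyclic group $[P,a]$, hence cyclic.

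For the convergence I first observe that $x_{p'}$ acts trivially on $P/D$, since $[P/D,x_{p'}]=[P,x_{p'}]D/D=1$; hence $x$ and its $p$-part $x_{p}$ induce the same automorphism of $P/D$. Because $P$ is the normal Sylow $p$-subgroup of $G$, the $p$-element $x_{p}$ lies in $P$, so $[P,_{n} x]D/D=[P,_{n} x_{p}]D/D\le\gamma_{n+1}(P)D/D=D/D$ once $n$ is at least the nilpotency class of $P$; that is, $[P,_{n} x]\le D$ for all large $n$. Finally, $G/P\cong A$ is nilpotent, say of class $c$, so $[h,_{c} x]\in P$ for every $h\in G$; putting $w=[h,_{c} x]\in P$, we get $[h,_{n} x]=[w,_{n-c} x]\in[P,_{n-c} x]\le D$ for all sufficiently large $n$. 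Thus $D$ is an Engel sink of $x$ and it generates a cyclic subgroup, as required.

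The single step I expect to need a genuine idea, as opposed to routine manipulation, is the reduction of $[P,x_{p'}]$ to $[P,a]$ with $a\in A$ through conjugacy of Hall $p'$-subgroups; once that is in hand, everything else is bookkeeping with the nilpotency of $P$ and of $G/P$ and with the action of $x$ on the $p$-group $P/D$.
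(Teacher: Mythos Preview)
Your argument is correct. The paper's proof reaches the same conclusion by a slightly different and more direct route. Rather than writing $x=x_{p}x_{p'}$ and then invoking solvability and Hall conjugacy to move $x_{p'}$ into $A$, the paper uses the given decomposition $G=PA$ (with $P\cap A=1$) to write $x=ya$ with $y\in P$ and $a\in A$; then $a$ lies in $A$ from the outset, so $[P,a]$ is cyclic immediately by hypothesis. Since $P\langle x\rangle=P\langle a\rangle$ and the quotient $P\langle a\rangle/[P,a]$ is nilpotent (the image of $a$ becomes central there), the image of $x$ is an Engel element in that quotient, whence $[P,a]$ is an Engel sink for $x$. Your commuting decomposition $x=x_{p}x_{p'}$ makes the sink verification marginally more transparent---$x_{p'}$ is genuinely central in $P\langle x\rangle/D$, so the reduction to $x_{p}\in P$ acting on the nilpotent group $P/D$ is immediate---but you pay for this with the detour through Hall's theorem to recover cyclicity of $D$. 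The paper's decomposition avoids that detour entirely; note that its $a$ and your $x_{p'}$ are in general different $p'$-elements (since $y$ and $a$ need not commute), so the two proofs even produce different cyclic sinks.
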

\begin{proof} We write $E(x)$ to denote the minimal Engel sink of an element $x\in G$. Moreover, if $K$ is a subgroup containing $x$ write  $E_K(x)$ for the minimal Engel sink of $x$ in $K$.  Since $G=PA$, for any element $x\in G$ we can write $x=ya$ for some $y\in P$ and $a\in A$. From the fact that $G/P$ is nilpotent, we deduce that $E(x)$  is contained in $P$. Moreover for any $x\in G$ we have
$$E(x)=E_{P\langle x\rangle}(x)=E_{P\langle a\rangle}(x).$$ Note that $[P,a]$ is normal in $P\langle a\rangle$, the quotient  $P\langle a\rangle/[P,a]$ is nilpotent  and  the image of $x$ in $P\langle a\rangle/[P,a]$ is an Engel element  in the quotient.  Thus $E_{P\langle a\rangle}(x)$ is contained in $[P,a]$. By hypothesis $[P,a]$ is cyclic and so the Engel sink $E(x)$ generates a cyclic subgroup, as desired. In particular for any $x\in G$, there is an element $a\in A$ such that $[P,a]$ is an Engel sink for $x$.
\end{proof}

Next lemma makes use of the following theorem \cite[Theorem 1.2]{AS_2019} about a finite group in which every element has an Engel sink generating a cyclic subgroup.
\begin{theorem}\label{AS19}
Let $G$ be a finite group in which every element admits an Engel sink generating a cyclic subgroup. Then $\gamma_\infty(G)$ is cyclic.
\end{theorem}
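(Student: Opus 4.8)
The plan is to argue by contradiction, taking a counterexample $G$ of least order, and to work throughout with the following reformulation of the hypothesis. In a finite group the minimal Engel sink $E(x)$ of an element $x$ generates the subgroup $[G,_\infty x]=\bigcap_n[G,_n x]$, the stable term of the descending chain $[G,x]\geq[G,x,x]\geq\cdots$: if $h\in[G,_\infty x]$ then $[h,x]=[g,_{n+1}x]$ for a suitable $g$ and all large $n$, so $[h,x]\in E(x)$, whence $[G,_\infty x]=\langle[h,x]:h\in[G,_\infty x]\rangle\leq\langle E(x)\rangle\leq[G,_\infty x]$. Thus the assumption says exactly that $[G,_\infty x]$ is cyclic for every $x\in G$. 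Since $[H,_\infty x]\leq[G,_\infty x]$ whenever $x\in H\leq G$, and the image of $[G,_\infty x]$ in any quotient $G/N$ is $[G/N,_\infty xN]$, this property passes to subgroups and to quotients. Hence in the minimal counterexample every proper subgroup and every proper quotient has cyclic nilpotent residual, whereas $\gamma_\infty(G)$ is noncyclic.

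The first step --- and the one I expect to be genuinely hard --- is to prove that $G$ is soluble. A nonsoluble finite group has a nonabelian simple section, so, by the subquotient-closure just noted, it is enough to show that no nonabelian simple group $S$ satisfies the hypothesis. Invoking the classification of finite simple groups one reduces this to the minimal simple groups (together with the quasisimple groups $\mathrm{SL}_2(q)$), and in each of them one exhibits an element $x$ and a normal section $V$ for which $[V,x]$ --- and hence $[G,_\infty x]$ after passing to that section --- is noncyclic; for the simple group itself one typically produces an involution $x$ with $[S,x]=S$. The bookkeeping over this list is where the real work lies.

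From now on $G$ is soluble; put $K=\gamma_\infty(G)$ and recall that $K=[K,G]$ and that $K$ is generated by coprime commutators. If $N$ is a minimal normal subgroup of $G$ one checks $N\leq K$ (otherwise $K\cong KN/N=\gamma_\infty(G/N)$ would be cyclic by minimality), so minimality of $G$ forces $K/N$ to be cyclic and $N$ to be an elementary abelian $q$-group; in particular $K$ is metabelian. The engine from here is coprime action combined with the following elementary observation: if a group acts faithfully, irreducibly and coprimely on a nontrivial elementary abelian group $V$ and $[V,a]$ is cyclic (equivalently, the endomorphism $a-1$ has rank at most $1$ on $V$) for every element $a$, then $\dim V=1$ --- otherwise a union-of-proper-subgroups argument on the diagonal characters occurring in a composition series of $V$ fails. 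One feeds the $G$-chief factors lying inside $K$ into this observation, using Lemma \ref{lem:standard}(ii) to replace the Engel condition by the identity $[V,_\infty a]=[V,a]$ on the coprimely acted factors, Lemma \ref{lem:standard}(iv) to descend to Frattini quotients of the $q$-subgroups involved, and Hall--Higman type reductions to bring the relevant chief factors into the coprime setting, and deduces in turn that each such chief factor is one-dimensional, that $K$ is nilpotent, and finally --- handling each Sylow subgroup $K_q$ of $K$ (which is then characteristic in $K$, hence normal in $G$, and satisfies $K_q=[K_q,G]$) via its Frattini quotient --- that every $K_q$, and therefore $K$, is cyclic. This contradicts the choice of $G$ and completes the argument. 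The soluble half is in this way a somewhat lengthy but conceptually routine induction on Fitting height driven by coprime action; the genuine obstacle of the proof is the reduction to the soluble case.
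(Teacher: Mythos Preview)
The paper does not prove this theorem: it is quoted from \cite[Theorem~1.2]{AS_2019} and invoked as a black box in Lemma~\ref{procyclic_in HA} and in Lemma~\ref{3}. There is thus no proof in the present paper against which your proposal can be compared.

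On the merits of your sketch: its overall shape---rewrite the hypothesis as ``$[G,_\infty x]$ is cyclic for every $x$'', use closure under sections to reduce to simple groups and eliminate the nonabelian ones via the classification, then in the soluble case run a minimal-counterexample argument driven by coprime action on the chief factors inside $\gamma_\infty(G)$---matches the strategy actually used in \cite{AS_2019}. Two places deserve more care. First, your justification of $\langle E(x)\rangle=[G,_\infty x]$ is not quite right as written: an element $h\in[G,_\infty x]$ need not itself be a single long commutator $[g,_n x]$, only a product of such, so one cannot directly conclude $[h,x]\in E(x)$; the clean argument takes $M$ so large that $[G,_M x]=[G,_\infty x]$ and every individual $[g,_M x]$ already lies in the periodic tail $E(x)$, whence the generators of $[G,_M x]$ are in $E(x)$. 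Second, your ``engine'' (faithful irreducible coprime action with every $[V,a]$ cyclic forces $\dim V=1$) and the CFSG case analysis are asserted rather than carried out; these are exactly where the substantive work of \cite{AS_2019} lies, and your outline does not supply them.
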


\begin{lemma}\label{procyclic_in HA}
Let $G=HA$, where $H$ is a normal abelian subgroup and $A$ is a pronilpotent subgroup such that $(|H|,|A|)=1$. If $[H,a]$ is procyclic for any $a\in A$, then $\gamma_\infty(G)=[H,A]$ is procyclic as well.
\end{lemma}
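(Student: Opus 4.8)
The plan is to first identify $\gamma_\infty(G)$ with $[H,A]$, and then to show that $[H,A]$ is procyclic by reducing to finite quotients where Theorem~\ref{AS19} applies. For the identification, note that $[H,A]$ is contained in $H$, hence it is centralized by the abelian group $H$ and normalized by $A$, so $[H,A]\trianglelefteq G$. In $G/[H,A]$ the image of $H$ is central and the image of $A$ is pronilpotent, and a central-by-pronilpotent profinite group is pronilpotent; thus $G/[H,A]$ is pronilpotent and $\gamma_\infty(G)\leq[H,A]$. Conversely, since $(|H|,|A|)=1$, every element of $H$ and every element of $A$ have coprime orders, so each generator $[h,a]$ of $[H,A]$ is a coprime commutator of $G$ and therefore lies in $\gamma_\infty(G)$. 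Hence $\gamma_\infty(G)=[H,A]$.

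Next I would recall that a profinite group is procyclic if and only if all of its finite continuous quotients are cyclic. Since $\gamma_\infty(G)=[H,A]$ is the inverse limit of the groups $\gamma_\infty(G)N/N$ over the open normal subgroups $N$ of $G$, and $\gamma_\infty(G)N/N\leq\gamma_\infty(G/N)$ for every such $N$, it suffices to prove that $\gamma_\infty(G/N)$ is cyclic for each open normal $N\trianglelefteq G$. Fix $N$ and put $\overline{G}=G/N$. This is a finite group of the form $\overline{G}=\overline{H}\,\overline{A}$ with $\overline{H}$ normal abelian, $\overline{A}$ nilpotent, $(|\overline{H}|,|\overline{A}|)=1$, and $[\overline{H},\bar a]$ cyclic for every $\bar a\in\overline{A}$, being the image of the procyclic subgroup $[H,a]$ under the projection $G\to\overline{G}$.

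The substantive step is then carried out prime by prime inside $\overline{G}$. Decompose the normal abelian group $\overline{H}=\prod_p\overline{H}_p$ into its Sylow subgroups; each $\overline{H}_p$ is normal in $\overline{G}$, so $\overline{G}_p:=\overline{H}_p\overline{A}$ is a subgroup, and for $p\in\pi(\overline{H})$ the group $\overline{A}$ is a nilpotent $p'$-group while $[\overline{H}_p,\bar a]\leq[\overline{H},\bar a]$ is cyclic for every $\bar a\in\overline{A}$. Thus Lemma~\ref{cyclic_sinks} applies to $\overline{G}_p$ (with $P=\overline{H}_p$), every element of $\overline{G}_p$ admits an Engel sink generating a cyclic subgroup, and Theorem~\ref{AS19} gives that $\gamma_\infty(\overline{G}_p)$ is cyclic. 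Applying the first paragraph to $\overline{G}$ and to each $\overline{G}_p$ yields $\gamma_\infty(\overline{G})=[\overline{H},\overline{A}]$ and $\gamma_\infty(\overline{G}_p)=[\overline{H}_p,\overline{A}]$; and since $[h,a]=\prod_p[h_p,a]$ whenever $h=\prod_p h_p\in\overline{H}$, the $p$-component of the abelian group $[\overline{H},\overline{A}]$ equals $[\overline{H}_p,\overline{A}]=\gamma_\infty(\overline{G}_p)$. Therefore $\gamma_\infty(\overline{G})$ is a direct product of cyclic groups of pairwise coprime orders, hence cyclic, which finishes the reduction and the proof.

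The routine parts of this argument are the commutator bookkeeping: the normality of $[H,A]$, its behaviour under passing to quotients, and its Sylow decomposition inside the abelian group $\overline{H}$. The step I expect to require some care is the organisation of this double reduction — first to the finite quotients $\overline{G}$, then to the $p$-local subgroups $\overline{G}_p$ — so that the hypotheses of Lemma~\ref{cyclic_sinks} are visibly met, in particular the requirement that $[\overline{H}_p,\bar a]$ be cyclic for \emph{every} $\bar a\in\overline{A}$; once that is in place, Theorem~\ref{AS19} provides the essential content.
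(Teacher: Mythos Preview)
Your proof is correct and follows essentially the same strategy as the paper: reduce to finite quotients and invoke Theorem~\ref{AS19} via the Engel-sink bound of Lemma~\ref{cyclic_sinks}. The only difference is that the paper applies this directly to the whole finite quotient $Q$ (the argument of Lemma~\ref{cyclic_sinks} goes through verbatim when the normal $p$-subgroup $P$ is replaced by a normal abelian subgroup on which $A$ acts coprimely), whereas you take the small extra detour of decomposing $\overline{H}$ Sylow-by-Sylow so as to match the literal hypotheses of Lemma~\ref{cyclic_sinks} and then reassemble; this is harmless but unnecessary.
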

\begin{proof}
In any finite  continuous quotient $Q$ of $G$ every element $x$ of $Q$  admits an Engel sink  $E(x)$ generating a cyclic subgroup since a minimal sink of $x$ is contained in the image of $[H,a]$ for some $a\in A$.  In view of Theorem \ref{AS19} we know that  $\gamma_\infty(Q)$ is cyclic. Thus $\gamma_\infty(G)=[H,A]$ is procyclic, as desired.
\end{proof}

Recall that any prosoluble group $G$ has a Sylow basis, that is a family of pairwise permutable Sylow $p_i$-subgroups $P_i$ of $G$, exactly one for each prime, and any two Sylow bases are conjugate (see  \cite[Proposition~2.3.9]{rib-zal}). The
basis normalizer (also known as the system normalizer) of such a Sylow basis in $G$ is $T=\bigcap _i N_G(P_i)$. If $G$ is a prosoluble group and $T$ is a system normalizer in $G$, then $T$ is pronilpotent and $G =
\gamma _{\infty}(G)T$ (see \cite[Lemma~5.6]{rei}).

The next lemmas are well-known in the case of a finite group (cf. \cite[9.2.7]{Robinson_2} and \cite[Lemma 2.4]{AST_2014})
The profinite variations of those results are straightforward using the standard inverse limit argument.

\begin{lemma}\label{profinte_systemnormalizer}
Let $G$ be a prosoluble  group such that $\gamma_\infty(G)$ is abelian. If  $T$ is any system normalizer in $G$ then $\gamma_\infty(G) \cap T=1$ and $T$ is a complement of   $\gamma_\infty(G)$.
\end{lemma}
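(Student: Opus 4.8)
The plan is to reduce to the finite case and appeal to the classical fact — the finite-group analogue of the present statement, see \cite[9.2.7]{Robinson_2} and \cite[Lemma~2.4]{AST_2014} — that in a finite soluble group whose nilpotent residual is abelian every system normalizer complements that residual. First, observe that the equality $G=\gamma_\infty(G)T$ is already available: it holds for any system normalizer $T$ of a prosoluble group by \cite[Lemma~5.6]{rei}, and since $\gamma_\infty(G)$ is a closed normal subgroup while $T$ is closed, the product $\gamma_\infty(G)T$ is compact and therefore closed, hence equal to $G$. So the only thing left to prove is $\gamma_\infty(G)\cap T=1$; once this is known, $T$ is by definition a complement of $\gamma_\infty(G)$.

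To get the trivial intersection I use the standard inverse limit argument. Fix a Sylow basis $\{P_i\}$ of $G$ with $T=\bigcap_i N_G(P_i)$ and let $U$ run over the open normal subgroups of $G$. For each such $U$ the quotient $G/U$ is finite soluble, the images $P_iU/U$ form a Sylow basis of it, and $\gamma_\infty(G)U/U=\gamma_\infty(G/U)$; moreover this last subgroup is abelian, being a continuous homomorphic image of the abelian group $\gamma_\infty(G)$. Writing $\bar T=\bigcap_i N_{G/U}(P_iU/U)$ for the basis normalizer of the image Sylow basis — which is a system normalizer of $G/U$ by definition — we have $N_G(P_i)U/U\le N_{G/U}(P_iU/U)$ for every $i$, and hence $TU/U\le\bar T$. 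The finite-group fact now applies to $G/U$ and yields $\gamma_\infty(G/U)\cap\bar T=1$, so in particular $\gamma_\infty(G/U)\cap TU/U=1$.

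Finally, the image of $\gamma_\infty(G)\cap T$ in $G/U$ lies inside $(\gamma_\infty(G)U/U)\cap(TU/U)=\gamma_\infty(G/U)\cap TU/U=1$, whence $\gamma_\infty(G)\cap T\le U$. As this holds for every open normal subgroup $U$ and the intersection of all of them is trivial, we conclude $\gamma_\infty(G)\cap T=1$, which finishes the argument. The only part that needs any care — and the reason the reduction is merely ``straightforward'' rather than trivial — is the inverse limit bookkeeping: that images of a Sylow basis form a Sylow basis of the quotient, that $\gamma_\infty$ is compatible with the projections $G\to G/U$, and that the image of the chosen system normalizer is contained in a genuine system normalizer of the quotient; each of these is a routine consequence of the corresponding statement for finite soluble groups.
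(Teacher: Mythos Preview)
Your proposal is correct and is precisely the ``standard inverse limit argument'' that the paper invokes without spelling out: reduce to finite quotients $G/U$, note that the image of $T$ lands inside a system normalizer there, apply the classical finite result (\cite[9.2.7]{Robinson_2}, \cite[Lemma~2.4]{AST_2014}), and intersect over all $U$. The paper gives no further detail, so your write-up is in fact a faithful expansion of what the authors intended.
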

Recall that a profinite group $G$ is metapronilpotent if and only if $\gamma_\infty(G)$ is pronilpotent.
\begin{lemma}\label{profinite_Hall}
 Let $G$ be a  metapronilpotent group, $P$ a Sylow pro-$p$ subgroup of $\gamma_\infty(G)$ and $H$ a Hall pro-$p'$ subgroup of $G$. Then $P=[P,H]$.
\end{lemma}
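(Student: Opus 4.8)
The plan is first to reduce to the case where $G$ is finite. By the inverse limit argument mentioned above it suffices, for every open normal subgroup $U$ of $G$, to prove that the image of $P$ in $G/U$ equals the commutator of that image with the image of $H$; here one uses that $\gamma_\infty(G/U)=\gamma_\infty(G)U/U$, that the image of $P$ is its Sylow $p$-subgroup, and that the image of $H$ is a Hall $p'$-subgroup of $G/U$. So assume $G$ is finite and metanilpotent and set $L=\gamma_\infty(G)$; then $L$ is nilpotent, $P=L_p$ is its Sylow $p$-subgroup, and $P$ is normal in $G$. Two remarks: the lower central series of a finite group stabilizes, so $[L,G]=L$; and since $L=P\times L_{p'}$ with both factors normal in $G$, this yields $L=[L,G]=[P,G][L_{p'},G]$ with $[P,G]\leq P$ and $[L_{p'},G]\leq L_{p'}$, whence $[P,G]=P$. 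Finally, since $\Phi(P)$ is normal in $G$ and $[P,H]=P$ is equivalent to $[P,H]\Phi(P)=P$, i.e.\ to the analogous statement for the images of $P$ and $H$ in $G/\Phi(P)$ — to which all the hypotheses, including $[P,G]=P$, pass — I may assume in addition that $P$ is elementary abelian.

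Next I choose the Hall $p'$-subgroup carefully. Let $T$ be a system normalizer of $G$; as recalled above $T$ is nilpotent and $G=LT$, so $T=T_p\times T_{p'}$ and $G=L\,T_{p'}\,T_p$. A short computation with orders — the key point being $L_{p'}\cap T_{p'}=(L\cap T)_{p'}$ — shows that $H_0:=L_{p'}T_{p'}$ is a Hall $p'$-subgroup of $G$. Since $P$ is normal we have $[P,H^g]=[P,H]^g$ for all $g\in G$, and all Hall $p'$-subgroups are conjugate, so it is enough to prove $P=[P,H_0]$; and as $[P,L_{p'}]=1$, we have $[P,H_0]=[P,T_{p'}]=[L_p,T_{p'}]$.

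The heart of the argument is that $M:=[L_p,T_{p'}]$ is normal in $G$. Since $G=\langle L_p,L_{p'},T_p,T_{p'}\rangle$, this follows once we note that $M$ is normalized by $L_p$ (which is abelian) and by $T_{p'}$, is centralized by $L_{p'}$ (as $[L_p,L_{p'}]=1$), and is normalized by $T_p$ (because $T_p$ normalizes $L_p$ and commutes with $T_{p'}$). Write bars for reduction modulo $M$. In $\bar G$ the subgroup $\overline{LT_{p'}}$ centralizes $\bar P$: indeed $\bar T_{p'}$ does so by construction, since $[P,T_{p'}]=M$, and $\bar L$ does so because $P$ is abelian and $L_{p'}$ centralizes $P$. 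As $\bar G=\overline{LT_{p'}}\cdot\bar T_p$ with $\bar T_p$ a $p$-group, it follows that $[\bar P,\bar G]=[\bar P,\bar T_p]$; but $\bar P\bar T_p$ is a finite $p$-group in which $\bar P$ is normal, so $[\bar P,\bar T_p]=\bar P$ would force $\bar P=1$ by nilpotency. On the other hand $[\bar P,\bar G]=\overline{[P,G]}=\bar P$ because $[P,G]=P$. Hence $\bar P=1$, that is $P=M=[P,H_0]$, and therefore $P=[P,H]$, as required.

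The single real obstacle is that for an arbitrary Hall $p'$-subgroup $H$ the commutator $[P,H]$ need not be normal in $G$, so one cannot pass to the quotient $G/[P,H]$; the reduction to $P/\Phi(P)$ together with the choice $H_0=L_{p'}T_{p'}$ coming from a system normalizer is exactly what makes $[P,H_0]$ normal and lets the nilpotency argument close. In the finite case the statement is moreover classical — see \cite[9.2.7]{Robinson_2} and \cite[Lemma 2.4]{AST_2014} — so one could alternatively just invoke it there and then apply the inverse limit reduction.
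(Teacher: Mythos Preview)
Your proof is correct and follows the same path as the paper: reduce to the finite case via the standard inverse limit argument, where the statement is classical (the paper simply cites \cite[9.2.7]{Robinson_2} and \cite[Lemma 2.4]{AST_2014} and gives no further details). Your self-contained treatment of the finite case---reducing to $P$ elementary abelian, passing to the Hall $p'$-subgroup $H_0=L_{p'}T_{p'}$ built from a system normalizer so that $[P,H_0]$ becomes normal, and then finishing with a nilpotency argument in the quotient---is a welcome addition that the paper omits, but it is not a different route so much as a filled-in one.
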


\begin{lemma}\label{intersection_with_center}
 Let $G$ be a metapronilpotent group such that $\gamma_\infty(G)$ is abelian. Then  the intersection $\gamma_\infty(G)\cap Z(G)$ is trivial. 
\end{lemma}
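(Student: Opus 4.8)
The plan is to reduce to a single prime and then to combine Lemma~\ref{profinite_Hall} with the elementary theory of coprime actions. Put $N=\gamma_\infty(G)$. Since $G$ is metapronilpotent, $N$ is pronilpotent, and as it is also abelian it splits as the direct product $N=\prod_p N_p$ of its Sylow pro-$p$ subgroups; each $N_p$ is characteristic in $N$, hence normal in $G$, and the projection $\pi_p\colon N\to N_p$ is continuous and $G$-equivariant. Thus if $z\in N\cap Z(G)$ then $\pi_p(z)\in Z(G)$ for every $p$, and since $z$ is the product of its components $\pi_p(z)$ it will be enough to show that $\pi_p(z)=1$ for each $p$. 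In other words, I may assume $z\in N_p\cap Z(G)$ for one fixed prime $p$.

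Next I would observe that $G$ is prosoluble: it is an extension of the pronilpotent group $N$ by the pronilpotent group $G/N$, so every finite continuous image of $G$ is an extension of a soluble group by a soluble group. Consequently $G$ has a Hall pro-$p'$ subgroup $H$. Because $G$ is metapronilpotent and $N_p$ is a Sylow pro-$p$ subgroup of $\gamma_\infty(G)$, Lemma~\ref{profinite_Hall} yields $N_p=[N_p,H]$.

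Finally, $H$ acts coprimely on the abelian pro-$p$ group $N_p$, so by the standard coprime-action decomposition of an abelian group (which reduces at once to the finite case; compare Lemma~\ref{lem:standard}) one has $N_p=[N_p,H]\times C_{N_p}(H)$. Together with $N_p=[N_p,H]$ this forces $C_{N_p}(H)=1$. But $z\in Z(G)$ centralizes $H$, so $z\in C_{N_p}(H)=1$; hence $z=1$, and therefore $\gamma_\infty(G)\cap Z(G)=1$.

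There is no serious obstacle here; the argument is essentially an assembly of the preceding lemmas. The two points that require a little care are purely of inverse-limit bookkeeping: that the Sylow decomposition of the abelian pronilpotent group $N$ is $G$-invariant (so that the reduction to a single prime is legitimate) and that the coprime decomposition $N_p=[N_p,H]\times C_{N_p}(H)$ survives passage to the profinite limit. Both are immediate from the corresponding facts for finite groups.
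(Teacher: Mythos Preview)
Your proof is correct but follows a different route from the paper's. The paper gives a two-line argument via Lemma~\ref{profinte_systemnormalizer}: if $T$ is a system normalizer of $G$ then $\gamma_\infty(G)\cap T=1$, and since $Z(G)\leq T$ (a standard property of system normalizers) the intersection $\gamma_\infty(G)\cap Z(G)$ is trivial at once. You instead bypass system normalizers entirely, working one prime at a time via Lemma~\ref{profinite_Hall} and the coprime-action splitting $N_p=[N_p,H]\times C_{N_p}(H)$ for a Hall pro-$p'$ subgroup $H$. Your argument is a little longer but more self-contained, as it does not appeal to the containment $Z(G)\leq T$, which the paper uses without justification; the paper's version, on the other hand, handles all primes simultaneously and drops out as an immediate corollary of Lemma~\ref{profinte_systemnormalizer}.
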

\begin{proof}
Let $T$ be any system normalizer in $G$. Since  $\gamma_\infty(G)$ is abelian, by Lemma~\ref{profinte_systemnormalizer}, we have  $\gamma_\infty(G)\cap T=1$. The result follows from the fact that  $Z(G) \leq  T$. 
\end{proof}

We denote by $FC(G)$ the $FC$-centre of a group $G$, that is, the set of all elements $g\in G$ such that $|g^G|$ is finite. Note that if $G$ is a profinite group, then $FC(G)$ need not be closed so in this case we treat $FC(G)$ as merely a subset of $G$ rather than a subgroup. An element of a group $G$ belonging to $FC(G)$ is called an $FC$-element.

Next lemma makes use of the following theorem \cite[Theorem 1.3]{AS_2022} that deals with profinite groups whose $\pi$-elements have restricted centralizers, that is when the centralizers of $\pi$-elements are either finite or open.
\begin{theorem}\label{AS_2022} Let $\pi$ be a set of primes and $G$ a profinite group in which the centralizer of each $\pi$-element is either finite or open. Then $G$ has an open subgroup of the form $P\times Q$, where $P$ is an abelian pro-$\pi$ subgroup and $Q$ is a pro-$\pi'$ subgroup.
\end{theorem}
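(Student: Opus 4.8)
The plan is to separate into two regimes after trivial reductions. If $G$ is finite take $U=1$, and if $\pi=\emptyset$ take $U=G$; so assume $G$ is infinite and $\pi\neq\emptyset$. When $\pi'=\emptyset$ the hypothesis is precisely Shalev's restricted‑centralizer condition and the conclusion is his theorem that $G$ is abelian‑by‑finite; Shalev's theorem will in any case serve as the basic local input below. The elementary observation underlying the dichotomy is that a $\pi$-element $x$ whose centralizer is open is an $FC$-element, since then $|x^G|=[G:C_G(x)]<\infty$. So we are in one of the regimes: (A) every $\pi$-element of $G$ has open centralizer, whence all $\pi$-elements of $G$ are $FC$-elements; or (B) some $\pi$-element has finite centralizer. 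A recurring device is to manufacture, out of ``bad behaviour at infinitely many primes of $\pi$'', a single $\pi$-element $y$ whose centralizer $C_G(y)$ is infinite yet of infinite index --- hence neither finite nor open --- contradicting the hypothesis; this is what cuts the problem down to finitely many primes.

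I would treat regime (B) first. Fix a $\pi$-element $x$ with $C_G(x)$ finite; since $\langle x\rangle\leq C_G(x)$ the element $x$ has finite order, and we may choose an open normal subgroup $N\leq G$ with $N\cap C_G(x)=1$, so that conjugation by $x$ is an automorphism of $N$ with $C_N(x)=1$. Replacing $x$ by a suitable power, reduce to $|x|=p$ prime. Then $N$ is pronilpotent by Thompson's theorem, $N=\prod_{q}N_q$, and the Sylow pro-$p$ subgroup of $N$ is trivial because a nontrivial $p$-element cannot act without fixed points on a nontrivial pro-$p$ group. For $q\in\pi$ with $q\neq p$ the subgroup $N_q$ consists of $\pi$-elements, hence is centre‑by‑finite by Shalev's theorem; and by the device above only finitely many such $N_q$ can be nonabelian (otherwise a product $y=\prod y_q$ with $y_q\in N_q\setminus Z(N_q)$ would have $C_N(y)$ infinite but of infinite index in $N$). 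Therefore $N_0:=\prod_{q\in\pi}Z(N_q)\times\prod_{q\notin\pi}N_q$ is open in $N$, hence in $G$, and has the form $P\times Q$ with $P=\prod_{q\in\pi}Z(N_q)$ an abelian pro-$\pi$ group and $Q=\prod_{q\notin\pi}N_q$ a pro-$\pi'$ group. Take $U=N_0$.

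In regime (A) the target is an open subgroup $U$ with a \emph{central} abelian Hall pro-$\pi$ subgroup $P$: once this is found, $U/P$ is pro-$\pi'$ and, $P$ being abelian and hence prosoluble, the profinite Schur--Zassenhaus theorem provides a pro-$\pi'$ complement $Q$ with $U=P\times Q$. The steps would be: (i) every Sylow pro-$p$ subgroup of $G$ with $p\in\pi$ has restricted centralizers and so is centre‑by‑finite by Shalev; (ii) since the $\pi$-elements are $FC$-elements, pass to an open $\pi$-separable subgroup $G_0$ of $G$ --- the obstruction being nonabelian chief factors of $\pi$-order, infinitely many of which would, as with $\prod_n A_5$, violate the hypothesis; (iii) inside $G_0$, use the centralizer condition to force a Hall pro-$\pi$ subgroup --- which is virtually abelian by (i) and, again by the device, abelian at all but finitely many primes --- to be virtually central, hence in a suitable open subgroup $U$ a central abelian normal Hall pro-$\pi$ subgroup, which is then split off as above.

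The step I expect to be the main obstacle is (iii): turning the local, prime‑by‑prime fact ``each Sylow pro-$p$ subgroup, $p\in\pi$, is centre‑by‑finite'' into a single open subgroup carrying a genuinely central, normal, abelian Hall pro-$\pi$ subgroup. This requires synchronising infinitely many primes of $\pi$ and --- since a Hall pro-$\pi$ subgroup of a $\pi$-separable group need not be normal and the pro-$\pi'$ part of $G$ is otherwise unconstrained --- extracting the normality and centrality of such a subgroup directly from the centralizer hypothesis, once more by constructing $\pi$-elements whose centralizers would be neither finite nor open. This is the content of Theorem~\ref{AS_2022}, established in \cite{AS_2022}.
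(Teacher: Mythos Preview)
This statement is not proved in the present paper: it is quoted from \cite[Theorem~1.3]{AS_2022} and invoked as a black box in the proof of Lemma~\ref{commutator subgroup finite}. There is therefore no proof here to compare against; your outline already goes well beyond what the paper does, and is broadly in line with the shape of the argument in \cite{AS_2022}.

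Two remarks on the sketch itself. In regime~(B), the clause ``replacing $x$ by a suitable power, reduce to $|x|=p$'' needs care, since a proper power of $x$ may have \emph{open} rather than finite centralizer. The standard repair: first pass to a primary component of $x$ with finite centralizer (one such exists, since a finite intersection of open subgroups is open), then choose the power $y$ of $p$-power order with $C_G(y)$ finite but $C_G(y^{p})$ open (or $|y|=p$ already), and take $N$ open normal inside $C_G(y^{p})$ with $N\cap C_G(y)=1$; then $y$ induces on $N$ a fixed-point-free automorphism of order exactly $p$, and Thompson applies. In regime~(A) you yourself flag that step~(iii) is the crux and that its resolution \emph{is} the theorem under discussion, established in \cite{AS_2022}; so your proposal is in the end a structured citation rather than an independent proof --- which is exactly how the present paper treats the result.
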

\begin{lemma}\label{commutator subgroup finite}
Let $A$ be a profinite group acting  on an abelian profinite group $M$ in such a way that $(|A|, |M|)=1$ and $[M,a]$ is finite for any $a\in A$. Then $[M,A]$ is finite as well. 
\end{lemma}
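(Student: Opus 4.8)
The plan is to reduce to a concrete situation and then argue by contradiction. Since $M$ is abelian and $A$ acts coprimely, Lemma~\ref{lem:standard}(i) gives $M=[M,A]\oplus C_M(A)$, and $[[M,A],A]=[M,A]$ by Lemma~\ref{lem:standard}(ii); so, replacing $M$ by $[M,A]$, I may assume $M=[M,A]$ and $C_M(A)=1$, and it suffices to show that $M$ is finite. Suppose it is not. The argument splits into two claims: (A) the Frattini quotient $\bar M=M/\Phi(M)$ is finite, so that $M$ is topologically finitely generated; and (B) a topologically finitely generated $M$ satisfying our hypotheses is necessarily finite. Together these contradict the assumption that $M$ is infinite. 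I expect Claim (A) to be the real work, and Claim (B) to be a short torsion-rank computation.

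For Claim (A): write $\bar M=\prod_p \bar M_p$, where $\bar M_p=M_p/\Phi(M_p)$ is an elementary abelian pro-$p$ group carrying a coprime action of $A$; each $\bar M_p$ is then a semisimple profinite $\mathbb{F}_p[[A]]$-module, hence the closed direct sum of its isotypic components $(\bar M_p)_i$, with pairwise non-isomorphic irreducible types $W_{p,i}$, and $C_{\bar M}(A)=1$ (Lemma~\ref{lem:standard}(iii)) forces every type occurring to be nontrivial. First one checks that each component $(\bar M_p)_i$ is finite: otherwise, choosing $a\in A$ acting nontrivially on $W_{p,i}$ gives $[(\bar M_p)_i,a]=(\bar M_p)_i$ infinite, hence $[\bar M,a]$ infinite, contrary to hypothesis. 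Now if $\bar M$ is infinite, infinitely many components are nonzero; pick a sequence of them and set $K_{p_n,i_n}=C_A(W_{p_n,i_n})$, an open subgroup of $A$ of index $\ge 2$. Since $[\bar M,a]$ is finite for each $a$, every $a$ lies outside only finitely many of these $K_{p_n,i_n}$; hence the continuous homomorphism $A\to\prod_n A/K_{p_n,i_n}$ has image inside the restricted direct product, a countable set. As an infinite profinite group is uncountable, this image is finite, i.e.\ $A_0=\bigcap_n K_{p_n,i_n}$ is open and normal in $A$. But $A_0$ centralizes the infinite $A$-invariant submodule $V_0=\prod_n(\bar M_{p_n})_{i_n}$, so $A$ acts on $V_0$ through the finite group $A/A_0$; then $[V_0,A]$ is generated by the finitely many finite subgroups $[V_0,a]\le[\bar M,a]$ and is therefore finite, whereas $C_{V_0}(A)=0$ gives $[V_0,A]=V_0$ infinite by Lemma~\ref{lem:standard}(i). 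This contradiction proves Claim (A).

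For Claim (B): now $M$ is topologically finitely generated profinite abelian, so its torsion subgroup $T$ is finite and $A$-invariant, and $N:=M/T$ is torsion-free with $C_N(A)=1$ and $[N,A]=N$ (Lemma~\ref{lem:standard}(iii)). If $N\ne 1$, choose a prime $q$ with $N/qN\ne 0$; this is a nonzero fixed-point-free $\mathbb{F}_q[A]$-module (Lemma~\ref{lem:standard}(iii) again), hence has no trivial composition factor, so some $a\in A$ acts nontrivially on it; then $[N,a]$ maps onto the nonzero space $[N/qN,a]$, so $[N,a]\ne 1$, and being a nonzero subgroup of a torsion-free profinite group it is infinite, contradicting the finiteness of $[M,a]$. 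Hence $M=T$ is finite. The main obstacle, as noted, is Claim (A): one must have the isotypic decomposition of the profinite Frattini quotient, the fact that each isotypic component is forced to be finite, and the countability argument that pins the action down to a finite quotient of $A$; throughout it is essential that each $[M,a]$ is genuinely finite, not merely of bounded order.
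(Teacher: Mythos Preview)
Your argument is correct and follows a genuinely different route from the paper's. The paper first disposes of the case where $A$ is finite, then invokes Zelmanov's theorem on periodic compact groups to reduce to $A$ abelian, and finally appeals to Theorem~\ref{AS_2022} (profinite groups with restricted centralizers of $\pi$-elements) to produce an open subgroup $P\times Q$ of $MA$, from which $[M,P]$ is finite and the rest follows easily. Your proof avoids both of these external results: instead you pass to the Frattini quotient, use the isotypic decomposition of a semisimple profinite $\mathbb{F}_p[[A]]$-module under a coprime action, and run a neat countability/compactness argument to force the action of $A$ through a finite quotient. This is more elementary and self-contained, at the cost of setting up the profinite isotypic decomposition carefully. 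Two small points: in Claim~(A) the assertion $[(\bar M_p)_i,a]=(\bar M_p)_i$ is a bit too strong (you only get that $[(\bar M_p)_i,a]$ is infinite, which is all you need); and in Claim~(B) the sentence ``$M$ is topologically finitely generated, so its torsion subgroup is finite'' is not true for arbitrary profinite abelian groups (think of $\prod_p \mathbb{Z}/p$), but it \emph{is} true here because Claim~(A) actually gives $M/\Phi(M)$ finite, hence $\pi(M)$ finite, and then each $M_p\cong\mathbb{Z}_p^{r_p}\times(\text{finite})$. In fact Claim~(B) can be replaced by a single line: once $M/\Phi(M)$ is finite, Lemma~\ref{lem:standard}(iv) gives $C_A(M)=C_A(M/\Phi(M))$ open in $A$, so $A$ acts through a finite quotient and $[M,A]$ is a finite product of the finite subgroups $[M,a]$.
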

\begin{proof}Without loss of generality we can assume that $A$ acts  on $M$ faithfully. Let $B$ be an abelian subgroup  of $A$ and   $\pi=\pi(B)$. Note that every $\pi$-element of $MB$ is an $FC$-element. Indeed, by hypothesis, for any $\pi$-element $x\in MB$ we know that $[M,x]$ is finite, so the index $[M:C_M(x)]$ is finite and $[MB:C_{MB}(x)]$ is finite as well.   In view of Theorem \ref{AS_2022}  $MB$ has an open subgroup of the form $P\times Q$, where $P$ is an abelian pro-$\pi$ subgroup contained in $B$ and $Q$ is a pro-$\pi'$ subgroup. Observe that $Q\leq C_M(P)$ and so $C_M(P)$ is an open subgroup in $M$ and  normal in $MB$. Since $M$ is abelian, we have $M=C_M(P)\times [M,P]$ and therefore $[M,P]$ is finite. Since $P$ is open in $B$, we can write $B=\langle P, b_1,\ldots, b_s\rangle$ for some elements $b_i\in B$ and a positive integer $s$. Thus $[M,B]=[M,P]\prod_{i=1}^{s}[M,b_i]$ is finite, as desired.
 
By Lemma \ref{lem:standard}(ii) $B$ acts  on $[M,B]$ faithfully and so $B$ must be finite. We have shown that every abelian subgroup of $A$ is finite. Because of \cite[Theorem 2]{Zelmanov} this implies that $A$ is finite. Now the result is obvious since $[M,A]=\prod_{a\in A}[M,a]$. 
\end{proof}

\begin{lemma}\label{coverwithcs}
Let $G$ be a profinite group such that  the set of all  coprime commutators of $G$ is contained in a union of countably many procyclic subgroups, say $C_1,C_2,\ldots$. If $M$ is a subgroup of $G$ that entirely consists of coprime commutators, then there is a positive  index $j$ such that $M\cap C_j$ has finite index in $M$.
\end{lemma}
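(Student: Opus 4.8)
The plan is a short Baire category argument. Since throughout the paper ``subgroup'' means ``closed subgroup'', $M$ is itself a profinite group, hence a nonempty compact Hausdorff space and in particular a Baire space. By hypothesis every element of $M$ is a coprime commutator of $G$, so $M\subseteq\bigcup_{i\geq 1}C_i$ and therefore $M=\bigcup_{i\geq 1}(M\cap C_i)$, which exhibits $M$ as a countable union of closed subgroups of $M$.

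By the Baire category theorem, a nonempty compact Hausdorff space cannot be written as a countable union of closed subsets each having empty interior; hence there is an index $j$ for which $M\cap C_j$ has nonempty interior in $M$. I would then invoke the standard fact that a subgroup of a topological group that contains a nonempty open set is open: if $U\subseteq M\cap C_j$ is nonempty and open in $M$ and $u_0\in U$, then $u_0^{-1}U$ is an open neighbourhood of the identity lying in $M\cap C_j$, so $M\cap C_j$ is a union of translates of $u_0^{-1}U$ and is therefore open in $M$. Finally, an open subgroup of the compact group $M$ has finite index, because its cosets form an open cover of $M$; thus $M\cap C_j$ has finite index in $M$, as desired.

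I do not expect any genuine obstacle here. The only points needing a little care are that the hypothesis forces each $M\cap C_i$ to be closed (so that Baire's theorem genuinely applies) and that ``finite index'' is extracted from ``open'' using compactness of $M$; note also that the procyclicity of the $C_i$ plays no role in this particular lemma and becomes relevant only later.
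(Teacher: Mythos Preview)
Your proof is correct and follows essentially the same Baire category argument as the paper: cover $M$ by the closed subgroups $M\cap C_i$, apply Baire's theorem to find one with nonempty interior, and conclude it is open and hence of finite index. The only cosmetic difference is that the paper phrases the nonempty-interior step as finding a coset $aN$ (with $N$ open normal in $M$) inside $M\cap C_j$ and then observing $N\subseteq M\cap C_j$, whereas you translate an open set back to the identity; these are the same manoeuvre.
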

\begin{proof}
Since by hypothesis the set of all coprime commutators of $G$ is covered by the countably many procyclic subgroups $\{C_i\}_{i\geq 1}$, the subgroup $M$ is covered by the countably many subgroups $\{M\cap C_i\}_{i\geq 1}$. By Baire's Category Theorem \cite[p.\ 200]{Kelley} there exists an integer $j$, an element $a\in M$ and an open normal subgroup $N$ in $M$ such that $aN\subseteq M\cap C_j$. Thus $N$ is contained in $M\cap C_j$ and so $M\cap C_j$ has finite index in $M$, as desired.\end{proof}

We say that a subset $X$ of a group $G$ is normal  if it is invariant under the inner automorphisms of $G$.
\begin{lemma}\label{useful} Let $X$ be a normal subset of a profinite group $G$ and assume that $X$ is contained in a union of countably many procyclic subgroups. Let $x\in X$. Then the normalizer $N_G(\langle x\rangle)$ has finite index in $G$.
\end{lemma}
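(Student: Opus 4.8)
The plan is to bound the number of conjugates of the cyclic subgroup $\langle x\rangle$ and then deduce that its normalizer is open. Write $C_1,C_2,\ldots$ for the countably many procyclic subgroups whose union contains $X$.

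Since $X$ is a normal subset, for every $g\in G$ the conjugate $x^g$ again lies in $X$, hence in some $C_i$; therefore the closed cyclic subgroup $\langle x\rangle^g=\langle x^g\rangle$ is contained in $C_i$. Next I would invoke the structural fact that a procyclic group possesses at most one closed subgroup of each prescribed order (viewed as a supernatural number): a procyclic group is the direct product of its Sylow subgroups, each of which is a quotient of $\Bbb Z_p$ and thus has a unique closed subgroup of every admissible order. Because conjugation is a topological isomorphism, all the subgroups $\langle x\rangle^g$ share the order of $\langle x\rangle$, so each $C_i$ can contain at most one of them. Consequently $\langle x\rangle$ has at most countably many conjugates in $G$; equivalently, the index of $N_G(\langle x\rangle)$ in $G$ is countable.

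Finally I would upgrade countable index to finite index in the same way as in the proof of Lemma~\ref{coverwithcs}: put $H=N_G(\langle x\rangle)$, a closed subgroup, and write $G$ as the disjoint union of its at most countably many cosets $tH$, each of which is closed. As $G$ is a profinite, hence Baire, space, some coset $tH$ has nonempty interior, whence $H$ itself contains a nonempty open set and so is open. An open subgroup of a profinite group has finite index, and the lemma follows.

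The only genuinely delicate point is the pair of observations in the middle step: that distinct conjugates of $\langle x\rangle$ all have the same order, and that inside any single procyclic group this order is attained by at most one closed subgroup. This is what converts the hypothesis ``covered by countably many procyclic subgroups'' into ``countably many conjugate subgroups''; once that is in hand, the passage to a finite-index (indeed open) normalizer is a routine Baire-category argument.
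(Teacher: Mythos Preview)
Your overall architecture is sound and close to the paper's, but the crucial middle step is justified incorrectly. You assert that a procyclic group has at most one closed subgroup of each supernatural order, arguing that each Sylow factor is a quotient of $\Bbb Z_p$ and hence has a unique closed subgroup of each admissible order. That is false precisely for $\Bbb Z_p$ itself: every $p^n\Bbb Z_p$ has order $p^{\infty}$, so $\Bbb Z_p$ contains infinitely many distinct closed subgroups all of the same supernatural order. Thus ``same order'' does not, by itself, force two conjugates $\langle x\rangle$ and $\langle x\rangle^{g}$ lying in the same $C_i$ to coincide.

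The statement you actually need (and which the paper uses) is different: the closed subgroups of any quotient of $\Bbb Z_p$ form a chain, so if $\langle x_p\rangle$ and $\langle x_p\rangle^{g}$ both sit inside the Sylow $p$-part of $C_i$ then one contains the other; combined with the profinite fact that $H^{g}\le H$ implies $H^{g}=H$ (seen in any finite quotient), this gives $\langle x_p\rangle=\langle x_p\rangle^{g}$ for every $p$, hence $\langle x\rangle=\langle x\rangle^{g}$. Once you plug in this correct justification, your argument goes through: each $C_i$ contains at most one $G$-conjugate of $\langle x\rangle$, so $N_G(\langle x\rangle)$ has countable index, and your Baire step on the closed cosets then shows it is open. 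The paper organises the same ingredients in the reverse order---it applies Baire to the sets $S_i=\{g\in G: x^{g}\in C_i\}$ first and only afterwards invokes the chain-plus-conjugacy argument---but the substantive content is the same.
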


In the proof of the previous lemma we will use without explicit reference  the following fact: let $H$ be a subgroup of a profinite group $K$ and let $y$ be an element of $K$ such that $H^y\leq H$, then $H^y=H$. This is because if $H^y < H$, then the inequality would also hold in some finite image of $K$, which yields a contradiction.

\begin{proof}[Proof of Lemma~\ref{useful}] Let $C_1,C_2,\dots$ be countably many procyclic subgroups whose union contains $X$. Let $S_i$ be the set of all $g\in G$ such that $x^g\in C_i$, for $i\geq 1$. The sets $S_i$ are closed and cover $G$.  By the Baire Category Theorem there exists  an element $a\in G$, an open normal subgroup $T\leq G$, and an index $j$ such that $aT\subseteq S_j$. We claim that $T$ normalizes $\langle x\rangle$. Indeed, by the definition of $S_j$ we know that  the elements $x^{at}\in C_j$ for every $t\in T$. In particular $\langle x^a \rangle \leq C_j$ and so $x$ belongs to $(C_j)^{a^{-1}}$. Moreover $\langle x^{at}\rangle \leq C_j$ for every  $t\in T$ and so also  $\langle x^t \rangle =\langle x \rangle^t$ is contained in $(C_j)^{a^{-1}}$ for every $t\in T$. Since $(C_j)^{a^{-1}}$ is procyclic, we claim  that $\langle x \rangle^t=\langle x \rangle$ for every $t\in T$. For,  let $x_p$ be a generator of the Sylow pro-$p$ subgroup of $\langle x \rangle$;  note that both subgroups  $\langle x_p \rangle$ and $\langle x_p \rangle^t$ are contained in a procyclic pro-$p$ subgroup isomorphic to $\mathbb{Z}_p$ or $C_{p^k}$ for some positive integer $k$, because $(C_j)^{a^{-1}}$ has a unique Sylow pro-$p$ subgroup containing both of them. Since for any two subgroups $A$ and $B$ of $\mathbb{Z}_p$ or $C_{p^k}$ we have that either $A\leq B$ or $B\leq A$, we conclude that either $\langle x_p \rangle^t\leq \langle x_p \rangle$ or $\langle x_p \rangle\leq \langle x_p \rangle^t$. It follows that  $\langle x_p \rangle^t = \langle x_p \rangle$ for any $t\in T$ and for every $p\in \pi(\langle x\rangle)$. Hence,  $\langle x \rangle^t= \langle x \rangle$ for any $t\in T$, as claimed.  Therefore $T\leq N_G(\langle x \rangle)$  and, since $T$ is open in $G$, the result follows.
\end{proof}

In the proof of the next  lemma we use the well-known Schur's Theorem  that if  $G$ is a group whose center has finite index, then $G'$ is finite (\cite[4.12]{Robinson}). 
\begin{lemma}\label{use} Let $G$ be a pro-$p$ group such that $G=AB$, where $A$ and $B$ are two infinite procyclic normal subgroups. Then $G$ is abelian.
\end{lemma}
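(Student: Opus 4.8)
The plan is to analyse the closed subgroup $A\cap B$ and then invoke Schur's theorem. Since $A$ and $B$ are infinite procyclic pro-$p$ groups, each is isomorphic to $\Bbb Z_p$, so every nontrivial closed subgroup of $A$, or of $B$, is open in it. As $A$ and $B$ are normal in $G$, the subgroup $[A,B]$ is contained in $A\cap B$, and $A\cap B$ is normal in $G$. If $A\cap B=1$, then $[A,B]=1$; since $A$ and $B$ are abelian and $G=AB$, the group $G$ is abelian and there is nothing more to prove. So from now on I would assume $A\cap B\neq 1$, and set $W=[A,B]$.

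Next I would record the structural consequences of $A\cap B\neq 1$. The subgroup $A\cap B$ is a nontrivial closed subgroup of each of $A$ and $B$, hence open in both; since $G/A\cong B/(A\cap B)$, the index $[G:A]$ is finite, and symmetrically $[G:B]$ is finite, so $A$, $B$ and $A\cap B$ are all open in $G$. Because $[A,A]=[B,B]=1$ and $G$ is generated by $A\cup B$, the derived subgroup $G'$ is the normal closure in $G$ of $[A,B]=W$; and $W^g=[A^g,B^g]=[A,B]=W$ for every $g\in G$ by normality of $A$ and $B$, so in fact $G'=W$. Moreover $W\subseteq A\cap B$, so $W$ is contained in the abelian subgroup $A$ and in the abelian subgroup $B$; hence $[W,A]\subseteq[A,A]=1$ and $[W,B]\subseteq[B,B]=1$, and therefore $G=AB$ centralizes $W$. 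Thus $W\leq Z(G)$, and $G$ is nilpotent of class at most $2$ with $G'=W\leq A\cap B$.

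To conclude, suppose $W\neq 1$. Then $W$ is a nontrivial closed subgroup of $A\cap B\cong\Bbb Z_p$, so $W$ is open in $A\cap B$, which is open in $G$; hence $Z(G)$, containing $W$, has finite index in $G$. By Schur's theorem $G'=W$ is finite, contradicting $W\cong\Bbb Z_p$. Therefore $W=1$ and $G$ is abelian. I do not expect a serious obstacle here: the only points needing care are the elementary bookkeeping that $G'=W$ and that $W$ is central (both resting on the normality and commutativity of $A$ and $B$ together with $W\subseteq A\cap B$), and the harmless passage between the abstract and the topological commutator subgroup, which is legitimate because $G'$ is shown to be finite, hence already closed.
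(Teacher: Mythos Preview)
Your proof is correct and follows essentially the same route as the paper's: both arguments observe that $G'\le A\cap B$ is central, then use that a nontrivial closed subgroup of $\Bbb Z_p$ is open to force $Z(G)$ to have finite index, and derive a contradiction via Schur's theorem. Your write-up simply supplies more detail (the explicit identification $G'=[A,B]$ and the separate treatment of the case $A\cap B=1$, which is in fact subsumed by the main argument since then $W\le A\cap B=1$).
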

\begin{proof} Since $G$ is a pro-$p$ group, an infinite procyclic subgroup of $G$ is just infinite.  Note that both quotient $G/A$ and $G/B$ are abelian and so $G'\leq A\cap B$. In particular $G'$ is central in $G$. So if $G$ is nonabelian, then  $G'$ must be a subgroup of finite index in both factors $A$ and $B$. Since $G'\leq Z(G)$, it follows that $G$ is central-by-finite. In view of  Schur's Theorem   $G'$ is finite as well, which clearly is a contradiction since $G'$ is, for example, of finite index in $A$. We conclude that $G$ must be abelian and this completes the proof.
\end{proof}

\section{Main result}

Throughout  this section $G$ is a profinite group whose coprime commutators are contained in a union of countably many procyclic subgroups and  we want to show that $\gamma_\infty(G)$ is finite-by-procyclic. Note that our hypotheses are inherited by subgroups and quotients of the group $G$. 

Before proceeding with the proof of Theorem~\ref{main} we make a general observation that we will use throughout  this section.

\begin{remark}\label{Re1}
Let $K$ be a profinite group and $N$ a finite normal subgroup of $K$. Then $\gamma_\infty(K)$ is finite-by-procyclic if and only if  so is $\gamma_\infty(K/N)$.
\end{remark}

In what follows we will use the   following result that was already mentioned in the introduction (see \cite[Proposition 2.12]{AS_2017}). 
\begin{proposition}\label{AS17_prop}
 A profinite group $K$ is covered by countably many procyclic subgroups if and only if $K$ is finite-by-procyclic.
\end{proposition}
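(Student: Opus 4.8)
I would prove the two implications separately. The implication that a finite-by-procyclic profinite group is covered by countably many procyclic subgroups is the more elementary one, and I would postpone it; the substance is the converse. So let $K$ be covered by countably many procyclic subgroups $C_1,C_2,\dots$. Since $K$ is compact, the Baire category theorem forces some $C_j$ to have non-empty interior, hence to contain an open subgroup, hence to be open; thus $K$ has an open procyclic (in particular abelian) subgroup, and its core $N$ in $K$ is a procyclic normal open subgroup.

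Next I would reduce to a self-centralizing situation. Put $B=C_K(N)$; this is open and normal in $K$, with $N\le Z(B)$ and $B/N$ finite, so $B$ is center-by-finite and Schur's theorem gives that $B'$ is finite. Factoring out the finite normal subgroup $B'$, we may assume $B$ abelian; an abelian profinite group with an open procyclic subgroup is finite-by-procyclic (argue through the Sylow decomposition), so by Lemma~\ref{subgrchar} $B$ has a finite characteristic subgroup with procyclic quotient, and factoring that out (it is normal in $K$) we may assume $B$ itself procyclic. Since $N\le B$ is abelian, $C_K(B)\le C_K(N)=B$, so $B$ is self-centralizing. Writing $C:=B$, we are reduced to the situation: $K$ has a procyclic, self-centralizing, normal subgroup $C$ of finite index (so $K/C$ acts faithfully on $C$), and $K$ is still covered by countably many procyclic subgroups.

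The crucial point is that $[C,K]$ is finite, and I would prove this by contradiction using Lemma~\ref{useful}, which (applied to $X=K$) says that every $\overline{\langle g\rangle}$ has a finite-index normalizer in $K$, hence only finitely many conjugates. Writing $C=\prod_p C_p$ one has $[C,K]=\prod_p[C_p,K]$, so if this were infinite then either (i) some $C_p\cong\mathbb Z_p$ has $[C_p,K]\ne 1$, or (ii) $[C_p,K]\ne 1$ for infinitely many $p$. In case (i) pick $g\in K$ acting on $C_p$ by $\sigma_p\ne 1$ and look at the conjugates $\overline{\langle g\rangle}^{c}=\overline{\langle g\,[g,c]\rangle}$ with $c\in C_p$, where $[g,c]$ runs through the infinite group $[C_p,g]=(1-\sigma_p)(\mathbb Z_p)$; a direct computation gives $(gw)^{d}=g^{d}$ for $w\in C_p$, where $d$ is the order of $gC$ in $K/C$, so all these conjugates meet $C$ in the same subgroup $\overline{\langle g^{d}\rangle}$, and since $g^{d}$ commutes with $g$ its $p$-part lies in $\ker(\sigma_p-1)=0$; hence the conjugates are pairwise distinct, contradicting finiteness. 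In case (ii) a pigeonhole argument — using that $K/C$ is finite, that $p\nmid |K/C|$ for all but finitely many $p$, and the faithfulness of the action — produces one $g\in K$ and an infinite set $S$ of primes on each of whose $C_p$ the element $g$ acts with a common order $e$ coprime to $p$; then $\sigma_p-1$ is invertible on $C_p$, so $[M_1,g]=M_1$ and $\ker(\sigma_p-1)=0$ for $M_1:=\prod_{p\in S}C_p$, and the same computation $(gw)^{d}=g^{d}$, with $g^{d}$ having trivial $M_1$-part, shows the $M_1$-conjugates of $\overline{\langle g\rangle}$ are indexed by $M_1$ itself — infinitely many, again a contradiction. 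The bookkeeping that identifies the set of conjugates with a coset space inside $C$ and pins down $\overline{\langle gw\rangle}\cap C$ is the step I expect to be the most delicate. Once $[C,K]$ is finite, $C/[C,K]$ is central of finite index in $K/[C,K]$, so Schur's theorem makes $(K/[C,K])'$ finite; the abelianization of $K/[C,K]$ is then abelian with an open procyclic subgroup, hence finite-by-procyclic, so $K/[C,K]$, and therefore $K$, is finite-by-procyclic.

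For the converse, assume $K$ is finite-by-procyclic; then $\gamma_\infty(K)$ is finite, so $K$ is finite-by-pronilpotent and hence, by Lemma~\ref{fbp_implies_pbf}, pronilpotent-by-finite. In an open pronilpotent subgroup each Sylow subgroup has finite commutator subgroup, hence is nilpotent of finite class, so Lemma~\ref{fatto2} applies and (with Schur's theorem) shows it is center-by-finite; a center-by-finite group is a finite union of cosets of its centre, each lying in an abelian subgroup, so everything comes down to covering abelian finite-by-procyclic groups. By the Sylow decomposition such a group is a direct product of a finite group and a procyclic group, and I would cover it one prime at a time — $C_p\times\mathbb Z_p$-type factors being covered by an explicit countable family — and then absorb the coprime procyclic factors into the covering subgroups. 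The point that needs care here is the transition from a covered open (or normal) subgroup to $K$ itself, i.e.\ covering the finitely many cosets rather than just the subgroup.
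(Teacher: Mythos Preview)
The paper does not prove Proposition~\ref{AS17_prop}; it is imported from \cite[Proposition~2.12]{AS_2017} without argument, so there is no in-paper proof to compare yours against.

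On its own merits, your forward direction is essentially correct. Baire yields an open procyclic subgroup, the Schur reductions produce a self-centralizing open normal procyclic $C$, and the heart of the matter---finiteness of $[C,K]$---is handled by your two-case argument via Lemma~\ref{useful}. The identity $(gw)^d=g^d$ holds because on each relevant $C_p$ the operator $1+\sigma_p+\cdots+\sigma_p^{d-1}$ vanishes (since $\sigma_p^d=1$ and $\sigma_p-1$ is injective there), and that same injectivity forces the relevant part of $g^d$ to be trivial; hence distinct $w$ yield distinct $\overline{\langle gw\rangle}$ and one obtains infinitely many conjugates, the desired contradiction. The bookkeeping you flag as delicate does go through.

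Your converse is under-specified. The detour through Lemma~\ref{fbp_implies_pbf} and Lemma~\ref{fatto2} is unnecessary: once $N$ is finite normal with $K/N$ procyclic, the centralizer $C_K(N)$ is open and already abelian (it is generated over its central subgroup $N\cap C_K(N)$ by a single element), so you reach the abelian finite-by-procyclic case in one step. More to the point, you flag but do not actually resolve the genuine issue---covering the finitely many cosets of that open abelian subgroup by countably many procyclic subgroups---and as written the argument stops short of doing so. This step is elementary but does require an explicit construction, which your sketch does not supply.
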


\begin{lemma}\label{000} Suppose $G=HA$, where $A$ is procyclic, $H$ is a normal subgroup, and $(|A|,|H|)=1$. Then $[H,A]$ is finite-by-procyclic.
\end{lemma}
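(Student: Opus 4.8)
The plan is to reduce the whole statement to the case where $H$ is abelian, where the covering hypothesis applies almost directly. First, the reductions. By Lemma~\ref{lem:standard}(ii) we have $[H,A]=[[H,A],A]$, and since the hypothesis on coprime commutators is inherited by subgroups, we may replace $H$ by $[H,A]$ and assume $H=[H,A]$; the task becomes to prove that $H$ is finite-by-procyclic, equivalently (Proposition~\ref{AS17_prop}) that $H$ is covered by countably many procyclic subgroups. Since $(|H|,|A|)=1$ we have $H\cap A=1$, so $G=HA$ is a semidirect product with $A$ acting coprimely on $H$; fixing a topological generator $a$ of $A$, a standard commutator computation together with an inverse-limit argument gives $[H,A]=[H,a]$, so $H$ is topologically generated by the coprime commutators $\{[h,a]:h\in H\}$. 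Moreover the set of \emph{all} coprime commutators of $G$ lying in $H$ is a normal subset of $G$, is contained in a union of countably many procyclic subgroups, and generates $H$. Finally, granting the abelian case treated next, it is enough to prove that $H'=[H,H]$ is finite: then $H/H'$, being of the form $[\,\cdot\,,A]$, is finite-by-procyclic by the abelian case, hence $H$ is finite-by-(finite-by-procyclic), i.e.\ finite-by-procyclic.

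The abelian case. Suppose $H$ is abelian. Then $h\mapsto[h,a]$ is a continuous endomorphism of $H$ with image $[H,a]=H$, so every element of $H$ is a coprime commutator of $G$. By Lemma~\ref{coverwithcs} applied with $M=H$, some $H\cap C_j$ has finite index in $H$, i.e.\ $H$ has an open procyclic subgroup $D$. Writing $H=\prod_pH_p$ and $D=\prod_pD_p$ for the Sylow decompositions, $H_p=D_p$ is procyclic for all but finitely many $p$, while each of the remaining $H_p$ is a finitely generated abelian pro-$p$ group with an open procyclic subgroup, hence isomorphic to $\mathbb Z_p\times(\text{finite})$ or finite; in either case $H_p$ is finite-by-procyclic. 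Applying Lemma~\ref{abelian_fbp} to the Hall subgroup of $H$ spanned by this finite set of primes shows $H$ is finite-by-procyclic; in particular $H/H'$ is finite-by-procyclic in the general case.

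It remains to prove that $H'$ is finite, which is the heart of the matter. The plan is as follows. First reduce to the case that $H$ is prosoluble: since $H$ is generated by a normal set of coprime commutators contained in countably many procyclic subgroups, Lemma~\ref{useful} forces the cyclic subgroup generated by any one of them to have open normalizer in $G$, and this is incompatible with $H$ having an infinite ``semisimple'' part — for instance infinitely many nonabelian simple composition factors, which must all be moved nontrivially by $A$ since $H=[H,A]$. Then, with $H$ prosoluble, argue by induction on the derived length (or the Fitting height): passing freely to quotients by finite normal subgroups (Remark~\ref{Re1}, Proposition~\ref{AS17_prop}), peel off a characteristic abelian subgroup $L$ at the bottom; the quotient has smaller length, inherits all hypotheses, and so by induction has finite derived subgroup, leaving one to control $L$. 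For this one uses the coprime $A$-action on $L$: Lemma~\ref{lem:standard} to handle $C_H(A)$, Lemma~\ref{commutator subgroup finite} to obtain that $[L,A]$ is finite once each $[L,a]$ is, the system-normalizer lemmas (Lemmas~\ref{profinte_systemnormalizer} and~\ref{intersection_with_center}) to produce complements in the metapronilpotent sections that appear, and Lemma~\ref{use} to identify a pro-$p$ section that is a product of two procyclic normal subgroups as being abelian. Assembling these reduces the derived length to $1$ and yields that $H'$ is finite, which with the abelian case completes the proof.

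The main obstacle is the finiteness of $H'$. The covering hypothesis constrains $H$ only through the coprime commutators $[h,a]$ that generate it, whereas ordinary commutators $[h_1,h_2]$ are in general not coprime commutators, so Lemma~\ref{coverwithcs} cannot be applied to $H'$ directly. Converting the ``few generators'' information into ``$H'$ is finite'' is exactly the step that requires the coprime-action machinery and a careful inductive descent through the characteristic series; the configuration that is hardest to exclude is an infinite, non-procyclic pro-$p$ section of $H$, and Lemma~\ref{use}, used in tandem with Lemma~\ref{useful}, is the tool for handling it.
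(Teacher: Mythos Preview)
Your abelian reduction is correct, and so is the observation that once the abelian case is in hand it suffices to prove $H'$ finite. From that point on, however, the proposal is a sketch rather than a proof, and the sketch has real gaps.

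First, the reduction to $H$ prosoluble. You invoke Lemma~\ref{useful} to say that each coprime-commutator generator has open normalizer and then assert this is ``incompatible with $H$ having an infinite `semisimple' part.'' That needs an actual argument: open normalizers for individual procyclic subgroups do not by themselves forbid, say, $H$ having infinitely many nonabelian simple composition factors, and a priori $H$ is an arbitrary profinite group with $(|H|,|A|)=1$ and $H=[H,A]$. Second, even granting prosolubility, the derived-length induction is not carried out. Knowing $(H/L)'$ finite for the bottom abelian characteristic $L$ lets you pass (after killing a finite normal subgroup) to $H$ metabelian with $H'\le L$, but you still must show $H'$ finite there. You cite Lemma~\ref{commutator subgroup finite}, whose hypothesis is that $[L,a]$ is finite for every $a$; nothing you have established gives this --- the abelian case only yields $[L,a]$ finite-by-procyclic. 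The references to Lemmas~\ref{profinte_systemnormalizer}, \ref{intersection_with_center}, and \ref{use} are plausible ingredients, but you do not say how they combine to finish, and that combination is precisely the nontrivial content.

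The paper proceeds differently. It first settles the \emph{nilpotent} case by induction on the class: from the abelian case, $[H,a]/Z([H,a])$ is finite-by-procyclic, hence finite by Lemma~\ref{fatto2}, so $[H,a]'$ is finite by Schur, and one recurses. For general $H$ the paper does \emph{not} run a global structural induction on $H$. Instead it applies Baire to the sets $S_i=\{x\in H:[x,a]\in C_i\}$ to produce an open $A$-invariant normal $N\le H$ with $C=[N,a]$ procyclic, sets $K=\langle C^H\rangle$, shows $H/K$ is finite, and then does a case analysis on $C$ (finite; torsion-free; all Sylow subgroups finite; mixed via a Hall splitting). The crucial moves are Lemma~\ref{use}, forcing $K$ abelian in the torsion-free case, and Lemma~\ref{useful}, showing the finitely many extra generators $x_i$ of $H$ over $K$ are $FC$-elements, so $H$ is central-by-finite and $H'$ is finite by Schur. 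This is exactly the ``heart of the matter'' you flag, and your outline does not supply it.
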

\begin{proof} Let $A=\langle a\rangle$. So $[H,A]=[H,a]$ and without loss of generality, we can  assume that $H=[H,a]$, because $H$ is normal and $A$ acts coprimely on $H$. Observe that the result holds if $H$ is nilpotent. Indeed, suppose $H$ is nilpotent of class $c$. If $c=1$ and  $H$ is abelian, then every element of $H$ is a coprime commutator of the form $[h,a]$ for some $h\in H$. 
It follows from the hypothesis  that $H$ is covered by countably many procyclic subgroups and by Proposition \ref{AS17_prop} it is finite-by-procyclic, as desired. Assume now $c\geq 2$. 
The image of $H$ in the quotient  $G/Z(H)$ is   a nilpotent group of class less than $c$, so by  the inductive hypothesis,  we know that $H/Z(H)$ is finite-by-procyclic. Hence, by  Lemma~\ref{fatto2}, $H/Z(H)]$ is finite and, in view of the Schur theorem, the derived subgroup $H'$ is finite as well. Moreover observe that the quotient $H/H'$ is abelian, so by induction it follows that  $H/H'$ is finite-by-procyclic. Combining this with the fact that $H'$ is finite, we conclude that $H$ is finite-by-procyclic, as desired.

 Let us deal now with the situation where $H$ is not necessarily nilpotent. 
Let $C_1,C_2,\dots$ be countably many procyclic subgroups containing all coprime commutators of $G$. Set $S_i=\{x\in H \mid [x,a] \in C_i\}$ for every $i\geq 1$. The sets $S_i$ are closed and cover $H$. Hence by  Baire's Category Theorem \cite[p.\ 200]{Kelley}  there is an open  normal subgroup $N_0$ of $G$, an element $b\in H$ and a positive integer $j$ such that  $(N_0\cap H)b\subseteq S_j$. Set $N=N_0\cap H$. Observe that $N$ is $A$-invariant. We have  $[nb,a]\subseteq C_j$ for every $n\in N$. Using the formula $[xb,a]=[x,a]^b[b,a]\in C_j$, we deduce that $[x,a]^b$ belongs to $C_j$ for every $x\in N$. Thus $[x,a]\in (C_j)^{b^{-1}}$ for every $x\in N$ and so $[N,a]\subseteq  (C_j)^{b^{-1}}$ is procyclic. Set $C=[N,a]$.

 If $C$ is finite, then $a^N$ is finite and so $[N:C_N(a)]$ is finite as well. Combining this with the fact that $N$ is open in $H$ we deduce that $C_H(a)$ is open in $H$. We claim that $H$ is finite as well. Indeed,  let $L$ be an open normal subgroup of $H$ contained in $C_H(a)$. By \cite[Lemma 2.1(iv)]{AGS_2023} the subgroup $L$ is contained in $Z(H)$. Thus $Z(H)$ has finite index in $H$ and, by  Schur Theorem, $H'$ is finite.  Then we can assume that $H$ is abelian and $C=[N,a]$ becomes a finite normal subgroup of $H$. Passing to the quotient $H/C$ we can also assume that $N\leq C_H(a)$. Since $H=[H,a]$ is abelian we have $C_H(a)=1$. Therefore  $N$ is trivial and so $H$ is finite, as claimed.

It remains to deal with the case where $C=[N,a]$ is infinite. Note that $C$ is a normal subgroup  in $N$ since $N$ is $A$-invariant. Set $K=\langle C^H\rangle$. We now prove  that $H/K$ is finite. Indeed, the image of $N$ in the quotient $H/K=[H/K,a]$, say $\bar{N}$, is contained in $C_{H/K}(a)$.  By \cite[Lemma 2.1(iv)]{AGS_2023}  the image $\bar{N}$ of $N$ in this quotient is contained in $Z(H/K)$. Thus $Z(H/K)$ has finite index in $H/K$ and, again by  Schur Theorem, $(H/K)'$ is finite. By passing to the quotient over $(H/K)'$ we can assume that $H/K$ is abelian and $N$ becomes trivial in this quotient. Thus $N\leq K$ and it follows from the openness of $N$ in $H$ that $H/K$ is finite, as desired. 

 Let $P_1,\dots,P_t$ be  $a$-invariant Sylow $p_i$-subgroups of $H$ such that $H$ is generated by $K$ and $[P_i,a]$ for $1\leq i \leq t$ (cf. \cite[ p.\ 1341]{AGS_2023}).  Since $H/K$ is finite, we can choose finitely many $p$-elements (possibly over different primes $p$) $x_1,\dots,x_s$, each of the form $[g_i,a]$ for suitable $g_i\in P_i$  such that $H$ is generated by $K$ and $x_1,\dots,x_s$. In what follows we consider different cases depending on the properties of $C$.

{\it Case 1:} First  assume that $C$ is torsion-free and recall that $C$ is infinite procyclic. We will deduce from  Lemma~\ref{use} that  $K$ is abelian. Indeed, $K$ is nilpotent, being a product of finitely many normal procyclic subgroups. Moreover any Sylow pro-$p$ subgroup of $K$ is product of procyclic infinite subgroups that commute with each other by  Lemma~\ref{use}. Thus any Sylow pro-$p$ subgroup of $K$ is abelian and so is $K$.  Now we deduce from Lemma~\ref{useful}   that $H$ is central-by-finite.  Indeed, $K$ is abelian and we have  $H=K\langle x_1,\ldots,x_s\rangle$. In order to see that $H$ is central-by-finite, it is enough to show that $[K:C_K(x_i)]$ is finite, for any $1\leq i \leq s$, because, in view of that $T=\cap_{i=1}^{s}C_K(x_i)$ has finite index in $K$ and so, using that $H/K$ is finite, we get that $T$ has also finite index in $H$. Thus $T\leq Z(H)$ and $H$ is central-by-finite, as desired. Therefore it remains to show that for any $1\leq i \leq s$ the index $[K:C_K(x_i)]$ is finite. By Lemma~\ref{useful},  if $X$ is the set of all coprime commutators contained in $H$, then for any $i$ we know that $N_H(\langle x_i\rangle)$ has finite index in $H$. 
Therefore  $N_K(\langle x_i\rangle)$ has finite index in $K$ as well. Now set $U_i=N_K(\langle x_i\rangle)$ and consider $V_i=U_i\langle x_i\rangle$. Observe that $U_i$ is abelian since it is contained in $K$, so $V_i$ is nilpotent of class at most two and $\langle x_i\rangle$ is normal in $V_i$.  Note also that $[H:U_i]$ is finite and so  $[V_i:U_i] $ is finite too. Hence, there is a positive integer $m$ such that $x_i^m\in U_i$. It follows that  $x_i^m\in Z(V_i)$ because $U_i$ is abelian. By Lemma~\ref{fatto1} the element $x_i$ commutes with $y^m$ for every $y\in V_i$.  Therefore any element of  $U_i^m$ commutes with $x_i$ and so $U_i^m\leq Z(V_i)$, in particular  $U_i^m\leq C_K(x_i)$. Observe that $U_i$ is a finitely generated abelian group, since $K$ is finitely generated (being product of finitely many procyclic subgroups) and $U_i$ has finite index in $K$. Moreover also the quotient $U_i/U_i^m$ is finite since it is an abelian finitely generated group whose generators have finite order. We deduce that $U_i^m$ has finite index in $K$ so $[K:C_K(\langle x_i\rangle)]$ is finite, as desired.

 Since $H$ is central-by-finite, again by Schur Theorem, $H'$ is finite and so we can assume that $H$ is abelian.  Therefore $H=[H,a]$ consists of coprime commutators and we deduce   from Proposition \ref{AS17_prop}  that $[H,a]$ is finite-by-procyclic. Note that in this case $H$ is finite-by-(torsion-free procyclic).

{\it Case 2:} Suppose now that $C$ is infinite procyclic and  all Sylow subgroups of $C$ are finite. Then $K$ is nilpotent and all Sylow subgroups of $K$ are finite.  Since $K$ is open in $H$ all Sylow subgroups of $H$ are finite as well.
In view of  Lemma~\ref{useful}, we obtain that the elements $x_1,\dots,x_s$ belong to $FC(H)$. Therefore $K\cap C_H(x_1,\dots,x_s)$ is open in $H$. Thus  there is a subgroup $M\leq K\cap C_H(x_1,\dots,x_s)$ that is open and normal in $H$.  Note that  $H=\langle K, x_1,\ldots, x_s\rangle$  and the subgroup $M$ centralizes $x_i$ for every $i$.  Since $K$ is nilpotent, say of nilpotency class $c$, for any element $h\in H$ of the form $h=ky$, where  $k\in K$ and $y\in \langle x_1,\ldots, x_s\rangle$, we deduce that $[M,h]=[M, k]$. Moreover, for any choice of elements $h_1,\ldots,h_c\in H$, with $h_i=k_iy_i$ where $k_i\in K$ and $y_i\in \langle x_1,\ldots, x_s\rangle$, we have $[M,h_1,\ldots,h_c]=[M,k_1,\ldots,k_c]=1$. Hence $M\leq Z_c(H)$  and by Baer’s Theorem \cite[Corollary 2, p.\ 113]{Robinson} $\gamma_{c+1}(H)$ is finite. We can assume that $H$ is nilpotent. By the nilpotent case considered above we know that $H/\gamma_{c+1}(H)$ is finite-by-procyclic and so $H$ is finite-by-(procyclic with finite Sylow subgroups) as desired.

{\it Case 3:} Finally  we  deal with the general case, that is when  $C$ is  infinite procyclic with possibly infinite Sylow subgroups. Write $C=D_1\times D_2$, where $D_1$ is the product of all infinite Sylow subgroups of $C$ and $D_2$ is  the product of all finite Sylow subgroups of $C$. Set $K_1=\langle D_1^H\rangle$ and $K_2=\langle D_2^H\rangle$ respectively. Observe that  in the quotient  $H/K_1$ the image of $C$ has all Sylow subgroups finite, so by {\it Case 2} treated above, we know that $H/K_1$ is finite-by-(procyclic with all  finite Sylow subgroups) and, similarly applying {\it Case 1} to the quotient $H/K_2$, we obtain that  $H/K_2$ is finite-by-(torsion free procyclic).  Taking into account that $(|K_1|,|K_2|)=1$, the direct product $H/K_1\times H/K_2$ is finite-by-procyclic too and the same holds for $H$ since it is isomorphic to a subgroup of the direct product. This completes the proof.
\end{proof}

We shall be using the following  theorem  \cite[Theorem 1.1]{DMS_2021} that characterizes profinite groups containing less than continuously many coprime commutators. This was already mentioned in the introduction.
\begin{theorem}\label{DMS21} 
A profinite group $K$ is finite-by-pronilpotent if and only if the cardinality of the set of coprime commutators in $K$ is less than $2^{\aleph_0}$.
\end{theorem}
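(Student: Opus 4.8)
The plan is to note first that both conditions are equivalent to the single assertion that $\gamma_\infty(K)$ is finite. Indeed, if $N$ is a finite normal subgroup with $K/N$ pronilpotent, then $\gamma_\infty(K)\le N$ is finite; conversely, if $\gamma_\infty(K)$ is finite, then $K/\gamma_\infty(K)$ is pronilpotent and $K$ is finite-by-pronilpotent. The \emph{easy direction} is then immediate: assuming $\gamma_\infty(K)$ finite, every coprime commutator $[x,y]$ becomes trivial in the pronilpotent quotient $K/\gamma_\infty(K)$ (there elements of coprime order lie in distinct Sylow factors and commute), so $[x,y]\in\gamma_\infty(K)$. Hence the set of coprime commutators is contained in the finite set $\gamma_\infty(K)$ and has cardinality less than $2^{\aleph_0}$.

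For the converse I assume the set $X$ of coprime commutators satisfies $|X|<2^{\aleph_0}$ and aim to prove $\gamma_\infty(K)$ finite. The crux is the following observation. Whenever $y\in K$ and $M$ is an abelian $y$-invariant subgroup with $(|M|,|y|)=1$, the map $m\mapsto[m,y]=m^{-1}m^y$ is a continuous endomorphism of $M$ whose image is the closed subgroup $[M,y]$; moreover each value $[m,y]$ is a coprime commutator, since $|m|$ divides $|M|$ and is therefore coprime to $|y|$. Thus $[M,y]\subseteq X$. Since an infinite profinite group admits a strictly descending chain of open subgroups and so has at least $2^{\aleph_0}$ elements, the inclusion $[M,y]\subseteq X$ together with $|X|<2^{\aleph_0}$ forces $[M,y]$ to be finite. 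Combining this with Lemma~\ref{commutator subgroup finite} (all the $[M,a]$ being finite), I obtain that $[M,A]$ is finite for every profinite group $A$ acting coprimely on an abelian profinite group $M$.

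I would then reduce to the situation where $R=\gamma_\infty(K)$ is abelian and $K$ is metapronilpotent. As the hypothesis passes to subgroups and quotients, this can be arranged by an induction peeling off finite normal pieces, using the boundedness of the commutators $[M,y]$ above together with Schur's theorem to collapse the abelian sections of $R$. Once $R$ is abelian and $K$ metapronilpotent, fix a prime $p$, let $P$ be the (characteristic, hence normal) Sylow pro-$p$ subgroup of $R$, and let $H$ be a Hall pro-$p'$ subgroup of $K$. Then $H$ acts coprimely on the abelian group $P$, so $[P,H]$ is finite by the previous paragraph, while Lemma~\ref{profinite_Hall} gives $P=[P,H]$; hence every Sylow subgroup of $R$ is finite. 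Taking a system normalizer $T$, one has $K=RT$ and $R=[R,T]=\prod_q[R,T_q]$ over the Sylow subgroups $T_q$ of the pronilpotent group $T$, and each $[R,T_q]$ is finite (its part on primes different from $q$ is a finite coprime commutator subgroup, and its $q$-part lies in the finite Sylow subgroup $P_q$); note also $R\cap Z(K)=1$ by Lemma~\ref{intersection_with_center}.

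The remaining, and principal, difficulty is to show that only finitely many primes are active, i.e.\ that $R=\prod_q[R,T_q]$ has finitely many nontrivial factors, so that an infinite product of finite Sylow subgroups is excluded. This is where the mechanism of the key observation must be globalized: if infinitely many primes occurred, one would assemble from the nontrivial local actions a single element $y\in K$ and a $y$-invariant abelian subgroup $M\le R$ with $(|M|,|y|)=1$ on which $y$ acts nontrivially across infinitely many finite factors, so that $[M,y]=\prod_p[P_p,y]$ is infinite, producing $2^{\aleph_0}$ coprime commutators and a contradiction. Manufacturing such a single $y$ from the per-prime data, via a compactness/inverse-limit argument inside the pronilpotent group $T$ after separating the primes shared by $R$ and $T$ (using Lemma~\ref{profinte_systemnormalizer}), is the technical heart of the proof and the step I expect to be the main obstacle. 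Once $\gamma_\infty(K)$ is shown to be finite, $K$ is finite-by-pronilpotent, completing the equivalence, and Lemma~\ref{fbp_implies_pbf} records the complementary pronilpotent-by-finite formulation.
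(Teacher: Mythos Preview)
First, note that the paper does not prove Theorem~\ref{DMS21}: it is quoted from \cite{DMS_2021} and used as an external tool, so there is no in-paper argument to compare your attempt against.

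Your easy direction is correct, and in the hard direction the key local observation---that $[M,y]$ is a closed subgroup consisting entirely of coprime commutators, hence finite whenever $M$ is abelian, $y$-invariant, and $(|M|,|y|)=1$---is sound and is indeed the engine behind the result. However, two genuine gaps remain. First, the reduction to ``$R=\gamma_\infty(K)$ abelian and $K$ metapronilpotent'' is asserted but not carried out: you speak of ``an induction peeling off finite normal pieces,'' yet no induction parameter is identified, and the hypothesis $|X|<2^{\aleph_0}$ does not by itself force any term of the derived or lower central series of $R$ to be finite without first establishing some structural restriction on $K$ (for instance, why should $R$ be pronilpotent, or $K$ virtually prosoluble, to begin the descent?). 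Second, you yourself flag the step ``only finitely many primes divide $|R|$'' as the main obstacle and leave it open; the proposed construction of a single element $y\in T$ acting nontrivially on infinitely many Sylow factors of $R$ is not justified, and a naive product of per-prime witnesses need neither converge in $T$ nor retain the required coprimality with the relevant part of $R$. What you have written is a reasonable outline of the strategy, but the two omitted steps are exactly where the substantive work of \cite{DMS_2021} lies, so as it stands this is not a proof.
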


\begin{lemma}\label{2} Suppose $G=PA$, where $P$ is a normal pro-$p$ subgroup and $A$ is a pronilpotent $p'$-subgroup. Then $[A:C_A(P)]$ is finite.
\end{lemma}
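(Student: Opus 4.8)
The plan is to reduce to the case where $P$ is elementary abelian, to show that $[P,a]$ is finite for every $a\in A$, and then to apply Lemma~\ref{commutator subgroup finite} followed by a Schur-type argument.

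First I would pass to the quotient $\bar G=G/\Phi(P)$. Since $\Phi(P)$ is characteristic in $P$ it is normal in $G$, and $\bar G=\bar P\,\bar A$ with $\bar P=P/\Phi(P)$ an elementary abelian normal pro-$p$ subgroup and $\bar A=A\Phi(P)/\Phi(P)$ a pronilpotent $p'$-subgroup isomorphic to $A$, because $A\cap\Phi(P)=1$. The subgroup $A/C_A(P)$ acts faithfully on $P$ and has order coprime to $|P|$, so by Lemma~\ref{lem:standard}(iv) it acts faithfully on $P/\Phi(P)$ as well; this means that $C_A(P)$ equals the kernel of the action of $A$ on $\bar P$, which under the isomorphism $A\cong\bar A$ corresponds to $C_{\bar A}(\bar P)$. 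Hence $[A:C_A(P)]=[\bar A:C_{\bar A}(\bar P)]$, and since the standing hypothesis is inherited by quotients I may replace $G$ by $\bar G$ and assume that $P$ is elementary abelian.

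Now, $P$ being abelian, for each $a\in A$ the map $x\mapsto[x,a]$ is a continuous endomorphism of $P$, so $[P,a]$ is a closed subgroup all of whose elements are coprime commutators of $G$ (an element of $P$ and an element of $A$ always having coprime orders). By Lemma~\ref{coverwithcs} some $[P,a]\cap C_j$ has finite index in $[P,a]$; but $[P,a]\cap C_j$ is procyclic, being contained in the procyclic group $C_j$, and elementary abelian, being contained in $P$, hence has order at most $p$. Therefore $[P,a]$ is finite for every $a\in A$, and since $A$ acts coprimely on the abelian profinite group $P$, Lemma~\ref{commutator subgroup finite} gives that $[P,A]$ is finite. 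Then $C_G([P,A])$ is open in $G$, so $C_A([P,A])$ has finite index in $A$; using $P=[P,A]C_P(A)$ from Lemma~\ref{lem:standard}(i) and that every element of $A$ centralizes $C_P(A)$, one checks that $C_A([P,A])=C_A(P)$, and the lemma follows.

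The essential content is Lemma~\ref{commutator subgroup finite} (which rests on Theorem~\ref{AS_2022}); the step I expect to require the most care is the reduction to elementary abelian $P$, namely checking that $C_A(P)$ is preserved under passage to $P/\Phi(P)$, after which the elementary remark that a procyclic subgroup of an elementary abelian pro-$p$ group has order at most $p$ converts the countable procyclic covering into the finiteness of each $[P,a]$.
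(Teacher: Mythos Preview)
Your argument is correct, but it takes a different route from the paper's. After the common reduction to $P$ elementary abelian, the paper observes that \emph{all} coprime commutators of $G$ lie in $P$ (since $G/P\cong A$ is pronilpotent), so intersecting the covering procyclic subgroups $C_i$ with $P$ produces countably many subgroups of order at most $p$ that still cover every coprime commutator; hence $G$ has only countably many coprime commutators, and Theorem~\ref{DMS21} immediately gives that $\gamma_\infty(G)=P$ is finite. You instead work one element at a time: show each $[P,a]$ is finite via Lemma~\ref{coverwithcs}, then assemble these with Lemma~\ref{commutator subgroup finite} (which rests on Theorem~\ref{AS_2022}) to get $[P,A]$ finite, and finish with the identification $C_A([P,A])=C_A(P)$. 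The paper's route is shorter and invokes only the single external result Theorem~\ref{DMS21}; your route avoids that theorem but trades it for the somewhat heavier machinery behind Lemma~\ref{commutator subgroup finite}, and requires the extra bookkeeping step $C_A([P,A])=C_A(P)$ in place of the paper's preliminary reduction $P=[P,A]$.
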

\begin{proof} Note that the action of $A$ on $P$ is totally determined by the action of $A$ on $[P,A]$ because $C_A(P)=C_A([P,A])$ and  $[P,A]=[P,A,A]$. Therefore we can  assume that $P=[P,A]$. Moreover  in view of Lemma~\ref{lem:standard}(iv) we can pass to $P/\Phi(P)$ and assume that $P$ is elementary abelian. Note that all the coprime commutators of $G$ are contained in $P$ since $G=PA$ and $A$ is pronilpotent. From the assumption that all coprime commutators are contained in a union of countably many procyclic subgroups, we deduce that (by eventually taking the intersection of $P$ with the procyclic subgroups containing the coprime commutators) the coprime commutators of $G$ are contained in countably many subgroups of order $p$ since $P$ is elementary abelian. Thus  there are only countably many coprime commutators in $G$ and,  by Theorem \ref{DMS21}, the group $G$ is finite-by-pronilpotent. In particular $\gamma_\infty(G)$ is finite. Since $A$ is pronilpotent and $[P,A]=P$, we have $\gamma_\infty(G)=P$, so  $P$ is finite.  Taking into account that $A/C_A(P)$ acts faithfully on   $P$ the result follows.\end{proof}

\begin{lemma}\label{3} Suppose $G=PA$, where $P$ is a normal pro-$p$ subgroup and $A$ is a pronilpotent $p'$-subgroup. Then $[P,A]$ is finite-by-procyclic.
\end{lemma}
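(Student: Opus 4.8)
The plan is to reduce to the case $P=[P,A]$, then use Lemma~\ref{2} to make the action essentially finite, express $[P,A]$ through finitely many pieces each controlled by Lemma~\ref{000}, and finally collapse everything to the abelian situation. By Lemma~\ref{lem:standard}(ii) we have $[P,A]=[P,A,A]$, so replacing $P$ by $[P,A]$ (a normal subgroup of $G$ on which $A$ acts coprimely) we may assume $P=[P,A]$. By Lemma~\ref{2}, $\bar A:=A/C_A(P)$ is a finite nilpotent $p'$-group; writing $A=C_A(P)\langle a_1,\dots,a_s\rangle$ and using that $C_A(P)$ acts trivially on $P$, that each $[P,a_i]$ is normal in $P$, and that $[P,a_i]^g=[P,a_i^{\,a}]$ whenever $g=pa\in G$, one sees that $[P,A]$ is topologically generated by finitely many subgroups $[P,b_1],\dots,[P,b_m]$, where the images $\bar b_1,\dots,\bar b_m$ may be taken to form a subset of $\bar A$ closed under conjugation. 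Each $[P,b_j]$ is normal in $P$, and by Lemma~\ref{000} (applied to the procyclic group $\langle b_j\rangle$ inside $P\langle b_j\rangle$) it is finite-by-procyclic.

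Next comes the crucial reduction: one can assume each $[P,b_j]$ is actually procyclic. Let $N_j$ be the torsion subgroup of $[P,b_j]$; it is finite (since $[P,b_j]$ is finite-by-procyclic and pro-$p$) and characteristic in $[P,b_j]$, hence normal in $P$. Put $N=N_1\cdots N_m$; being a product of finitely many finite normal subgroups of $P$ it is itself a finite normal subgroup of $P$. Moreover, for $a\in A$ one has $[P,b_j]^{\,a}=[P,b_j^{\,a}]=[P,b_{j'}]$ for a suitable index $j'$ (because $\{\bar b_l\}$ is conjugation-closed and $[P,-]$ depends only on the image in $\bar A$), so $N_j^{\,a}=N_{j'}$ and therefore $N^a=N$. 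Thus $N$ is a finite normal subgroup of $G$, and by Remark~\ref{Re1} we may pass to $G/N$; after this each $[P,b_j]$ is a quotient of the procyclic group $[P,b_j]/N_j$, hence procyclic. Since now each $[P,b_j]$ is a procyclic—so abelian—normal subgroup of $[P,A]$, the conjugation action embeds $[P,A]/C_{[P,A]}([P,b_j])$ into the abelian group $\mathrm{Aut}([P,b_j])$, whence $[P,A]'\le C_{[P,A]}([P,b_j])$ for each $j$; intersecting over $j$ gives $[P,A]'\le C_{[P,A]}([P,A])=Z([P,A])$. So $[P,A]$ is nilpotent of class at most $2$.

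To finish we use the abelian case of the lemma: if $M$ is an abelian normal subgroup of a group $HB$ satisfying the hypotheses of the lemma (so $(|M|,|B|)=1$, $B$ pronilpotent, $M=[M,B]$), then $M$ is finite-by-procyclic. Indeed, arguing as in the first paragraph $M=[M,B]$ is generated by finitely many subgroups $[M,b_j]$, each finite-by-procyclic by Lemma~\ref{000} and hence topologically finitely generated, so $M$ is finitely generated; its torsion subgroup $T$ is then finite (alternatively, $T=[T,B]$ and each $[T,b]$ is finite, so $T$ is finite by Lemma~\ref{commutator subgroup finite}), $M/T$ is torsion-free, each $[M/T,b]$ is procyclic (a torsion-free finite-by-procyclic group), and Lemma~\ref{procyclic_in HA} gives that $M/T=[M/T,B]$ is procyclic; thus $M$ is finite-by-procyclic. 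Applying this to $M=[P,A]/Z([P,A])$—an abelian normal subgroup of $G/Z([P,A])$ equal to its own commutator with $A$—shows $[P,A]/Z([P,A])$ is finite-by-procyclic. Write $Z:=Z([P,A])\le N^{*}$ with $N^{*}/Z$ finite and $[P,A]/N^{*}$ procyclic, $N^{*}$ normal in $[P,A]$, and choose $u$ with $[P,A]=\overline{\langle N^{*},u\rangle}$. Since $[P,A]$ has class $\le 2$ the commutator is bilinear on it, so $[P,A]'=(N^{*})'\,[N^{*},u]$; here $(N^{*})'$ is finite by Schur's theorem (because $Z(N^{*})\supseteq Z$ has finite index in $N^{*}$) and $[N^{*},u]$ is finite because $n\mapsto[n,u]$ is a homomorphism from $N^{*}$ into $Z$ that is trivial on $Z$ and so factors through the finite group $N^{*}/Z$; both are normal in $[P,A]$, so $[P,A]'$ is finite. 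Modding out $[P,A]'$ by Remark~\ref{Re1} puts us in the abelian case, which completes the proof (the quotients by $N$ and by $[P,A]'$ are undone by Remark~\ref{Re1}).

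The point where I expect the real difficulty to lie is exactly the passage from ``finitely many finite-by-procyclic pieces'' to ``one finite-by-procyclic group''—which is false in general, as $\Bbb Z_p\times\Bbb Z_p$ shows—so the subgroups $[P,b_j]$ cannot simply be multiplied together. What rescues the situation is the combination of two facts: the coprime-commutator hypothesis, entering through Lemma~\ref{000}, forces each $[P,b_j]$ to be finite-by-procyclic; and the finiteness of $A/C_A(P)$ from Lemma~\ref{2} is exactly what lets the torsion parts $N_j$ be assembled into a single finite $G$-invariant subgroup $N$. Once past $N$, the procyclicity of the generators forces nilpotency class $\le 2$, and the whole problem reduces to the abelian case handled by Lemmas~\ref{commutator subgroup finite} and~\ref{procyclic_in HA}.
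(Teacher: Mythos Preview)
Your argument is correct. The first half---reducing to $P=[P,A]$, using Lemma~\ref{2} to make $\bar A=A/C_A(P)$ finite, and modding out a finite $G$-invariant subgroup $N$ assembled from the torsion parts of the $[P,b_j]$ so that each $[P,b_j]$ becomes procyclic---coincides with the paper's proof (the paper takes $b_j$ to range over all of the finite group $A$ and invokes Lemma~\ref{subgrchar} for each $N_a$, but the mechanism is the same).

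The divergence is in the endgame. Once every $[P,a]$ is procyclic, the paper finishes in one stroke: Lemma~\ref{cyclic_sinks} says every element of every finite quotient has a cyclic Engel sink, Theorem~\ref{AS19} then forces $\gamma_\infty$ of each finite quotient to be cyclic, and hence $\gamma_\infty(G/N)=[P,A]$ is procyclic. You instead extract structural information: procyclicity of the normal generators $[P,b_j]$ forces $[P,A]'\le Z([P,A])$, so $[P,A]$ has class $\le 2$; you then handle the abelian case separately (via finite generation, finiteness of torsion, and Lemma~\ref{procyclic_in HA}) and bootstrap back to class $2$ by showing $[P,A]'$ is finite. This route is longer but arguably more transparent, and it isolates an independently useful ``abelian case''. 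Note, however, that it does not truly avoid the Engel-sink machinery: Lemma~\ref{procyclic_in HA} itself rests on Theorem~\ref{AS19}, so both proofs ultimately depend on it---the paper just invokes it more directly. Your step~11 can also be shortened: since $[P,A]$ is nilpotent and $[P,A]/Z([P,A])$ is finite-by-procyclic, Lemma~\ref{fatto2} immediately gives that $[P,A]/Z([P,A])$ is finite, and then Schur yields $[P,A]'$ finite without the explicit bilinear computation.
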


\begin{proof} As in the proof of the previous lemma, without loss of generality, we assume that $P=[P,A]$. In view of  Lemma~\ref{2}, by passing to the quotient $G/C_A(P)$, we can assume that $A$ is finite. For each $a\in A$, by Lemma~\ref{000}, the subgroup $[P,a]$ is finite-by-procyclic. By Lemma~\ref{subgrchar}, for any $a\in A$, the subgroup $[P,a]$ has a finite  characteristic subgroup $M_a$ such that $[P,a]/M_a$ is procyclic.  Then $M_a$ is a normal subgroup of $P$.  Let  $N_a=M_a^A$ be  the normal closure of $M_a$ in $G$. Observe that $N_a$ is  finite and  the image of $[P,a]$ in $G/N_a$ is procyclic. Set $N=\prod_{a\in A}N_a$ and pass to the quotient $G/N$. In this quotient  the image of $[P,a]$ is  procyclic for each $a\in A$. Therefore in any finite continuous homomorphic image $Q$ of $G/N$ we are in the position to apply Lemma~\ref{cyclic_sinks}  and obtain that any element of $Q$ admits an Engel sink generating a  cyclic subgroup. Then, in view of Theorem~\ref{AS19},  $\gamma_\infty(Q)$ is cyclic for any finite continuous homomorphic image $Q$ of $G/N$. Thus $\gamma_\infty(G/N)$ is procyclic and, since $N$ is finite, we conclude that $\gamma_\infty(G)=[P,A]$ is finite-by-procyclic, as desired.
\end{proof}

We introduce another  observation that will be useful in the subsequent arguments.
\begin{remark}\label{Re2} In a profinite group $K$, given $t\in K$ and  an open normal subgroup $N$ of $K$, any coset $tN$  contains an element $z$ such that $\pi(z)$ is finite. 
\end{remark} Indeed, set $\pi=\pi([K:N])$ and write an element $s\in tN$ in the form $s=zw$, where $z$ is a generator of the $\pi$-part of $\langle s\rangle$ and $w$ is a generator of the $\pi'$-part of $\langle s\rangle$. It follows that $w\in N$, so  $sN=zN$ and $\pi(z)$ is finite.

 For a set of primes $\pi$ we denote by $O_{\pi}(K)$ the unique largest normal pro-$\pi$ subgroup of  a profinite group $K$. Accordingly, if $K$ is pronilpotent we may write $K_\pi$ in place of $O_{\pi}(K)$.
\begin{proposition}\label{important} Suppose $G=HA$, where $H$ and $A$ are pronilpotent subgroups, $H$ is normal, and $(|A|,|H|)=1$. Then $[H,A]$ is finite-by-procyclic.
\end{proposition}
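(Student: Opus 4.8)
The plan is to bootstrap from Lemma~\ref{3}, which settles the case where $H$ is a pro-$p$ group, handling one prime at a time while keeping the contributions of all primes under simultaneous control through the covering hypothesis. Since $A$ acts coprimely on the normal subgroup $H$, Lemma~\ref{lem:standard} gives $[H,A]=[H,A,A]$, so after replacing $H$ by $[H,A]$ and $G$ by $[H,A]A$ I may assume $H=[H,A]$; the covering hypothesis is inherited. Being pronilpotent, $H$ is the Cartesian product $H=\prod_pH_p$ of its Sylow subgroups; each $H_p$ is characteristic in $H$, hence normal in $G$ and $A$-invariant, and $H=[H,A]=\prod_p[H_p,A]$ forces $H_p=[H_p,A]$ for every $p\in\pi(H)$. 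For such $p$ the group $A$ is a $p'$-group, so Lemma~\ref{3} applies to $H_pA$ and shows that $H_p=[H_p,A]$ is finite-by-procyclic; in particular $H_p$ is topologically finitely generated, $V_p:=H_p/\Phi(H_p)$ is a finite elementary abelian $p$-group, and by Lemma~\ref{subgrchar} there is a finite subgroup $N_p$, characteristic in $H_p$ and hence normal in $G$, with $H_p/N_p$ procyclic.

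The heart of the proof is to show that $H_p$ is procyclic --- equivalently, $\dim_{\mathbb F_p}V_p\le1$ --- for all but finitely many $p$. I would pass to $\bar G=G/\Phi(H)$, with $\Phi(H)=\prod_p\Phi(H_p)$, so that $\bar H:=H/\Phi(H)=\prod_pV_p$ is abelian and $\bar H=[\bar H,A]$. Fix $a\in A$. As $\bar H$ is abelian, the continuous endomorphism $h\mapsto[h,a]$ of $\bar H$ has image exactly $[\bar H,a]$, and since $[\prod_ph_p,a]=\prod_p[h_p,a]$ this image equals $\prod_p[V_p,a]$. Every element $[h,a]$ of $[\bar H,a]$ is a coprime commutator of $\bar G$ (the order of $h$ is a $\pi(H)$-number and that of $a$ a $\pi(H)'$-number), so $[\bar H,a]$ is a subgroup consisting entirely of coprime commutators; intersecting the covering procyclic subgroups with it and applying Proposition~\ref{AS17_prop}, we get that $[\bar H,a]$ is finite-by-procyclic. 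Since a Cartesian product of finite $\mathbb F_p$-spaces over distinct primes is finite-by-procyclic only when all but finitely many factors have dimension at most $1$, we conclude that $\dim_{\mathbb F_p}[V_p,a]\le1$ for all but finitely many $p$.

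On the other hand, the action of $A$ on $V_p$ factors through the finite nilpotent group $B_p=A/C_A(V_p)$ (its finiteness is a special case of Lemma~\ref{2}), which acts faithfully with $C_{V_p}(B_p)=0$ and $V_p=[V_p,B_p]$. A short module-theoretic argument shows that whenever $\dim_{\mathbb F_p}V_p\ge2$ there is an $a_p\in A$ with $\dim_{\mathbb F_p}[V_p,a_p]\ge2$: by Maschke's theorem $V_p$ is a semisimple $\mathbb F_pB_p$-module with no trivial constituents; a nontrivial central element of $B_p$ acts on each irreducible constituent either trivially or without eigenvalue $1$, which already produces such an $a_p$ unless all constituents are one-dimensional; and in that remaining case the characters afforded are nontrivial, so the required $a_p$ exists because no group is the union of two proper subgroups.

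The step I expect to be the main obstacle is the passage from this prime-by-prime information to the uniform statement that only finitely many $p$ satisfy $\dim_{\mathbb F_p}V_p\ge2$. The difficulty is that the witnessing automorphisms $a_p$ depend on $p$ and cannot be combined into a single element by hand, while the weaker fact that each fixed $a\in A$ sees only finitely many such primes is not enough, since $A$ need not be finitely generated. The resolution has to exploit the pronilpotent structure of $A$ --- reducing to its Sylow pro-$q$ subgroups and to the (nearly procyclic) quotients through which they act on the product of the bad $V_p$ --- so as to produce a single element $a\in A$, or a single subgroup of coprime commutators of infinite rank, that witnesses infinitely many bad primes at once; this would contradict the finite-by-procyclicity of $[\bar H,a]$ established above. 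Granting this, $H_p$ is procyclic for all but finitely many $p$, and then $N:=\prod N_p$ over the finitely many exceptional primes is a finite normal subgroup of $G$ for which $H/N$ is a Cartesian product over distinct primes of procyclic pro-$p$ groups, hence procyclic. Therefore $[H,A]=H$ is finite-by-procyclic, as required.
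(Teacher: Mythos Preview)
Your initial reductions---replacing $H$ by $[H,A]$, passing to $G/\Phi(H)$, and using Lemma~\ref{3} to see that each Sylow subgroup $V_p$ of $\bar H$ is finite elementary abelian---coincide with the paper's. You also correctly isolate the crux: one must show that $\dim_{\mathbb F_p}V_p\le1$ for all but finitely many $p$.

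However, the gap you flag is genuine, and your proposed route to close it is not the one that works. You aim to manufacture, from the family $\{a_p\}$, a single $a\in A$ (or a single coprime-commutator subgroup) detecting infinitely many bad primes. The paper never does this, and it is unclear that your sketch (``reducing to Sylow pro-$q$ subgroups of $A$ and to the nearly procyclic quotients through which they act'') can be made to converge: the images $A/C_A(V_p)$ may have pairwise disjoint support in $\pi(A)$, so no Sylow subgroup of $A$, and no single element, need see more than one bad prime. The module-theoretic production of the $a_p$ is thus a dead end here (incidentally, the case split should be ``at least two irreducible constituents'' versus ``a single constituent of dimension~$\ge2$'' rather than the one you wrote).

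The paper closes the gap by a second, more refined Baire Category argument. After factoring out the cyclic Sylow subgroups one may assume every $V_p$ is noncyclic. For each $a\in A$ the subgroup $[\bar H,a]$ consists of coprime commutators, so by Lemma~\ref{coverwithcs} there is an index $j(a)$ and a finite set $\pi_a$ with $[\bar H_{\pi_a'},a]\subseteq C_{j(a)}$. The sets $S_{(\pi,j)}=\{a\in A:\pi_a\subseteq\pi,\ j(a)=j\}$ form a \emph{countable} closed cover of $A$, and Baire yields an open normal $B\le A$, an element $a$, a finite $\pi$ and an index $j$ with $[\bar H,aB]\subseteq C_j$ after discarding the $\pi$-part of $\bar H$. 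A short conjugation computation then gives $[\bar H,B]\subseteq C_j$, so $[\bar H,B]$ is procyclic. Writing $\bar H=[\bar H,B]\times C_{\bar H}(B)$ and invoking Lemma~\ref{procyclic_in HA} (after shrinking by finitely many primes via Lemma~\ref{000}) forces $[C_{\bar H}(B),A]$ to be procyclic too, so $\bar H$ is a product of two procyclic subgroups with the same prime support. Finally, splitting $A=A_0\times B_0$ with $\pi(A_0)$ finite, Lemma~\ref{commutator subgroup finite} supplies $a\in A_0$ and $b\in B$ with $[C_{\bar H}(B),a]$ and $[\bar H,b]$ both infinite; after further trimming one checks that $a$ centralises $[\bar H,b]$ and $b$ centralises $[\bar H,a]$, whence $[\bar H,ab]$ contains both and cannot be finite-by-procyclic, contradicting Lemma~\ref{000}.

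In short: your outline is correct up to and including the identification of the main obstacle, but the missing step is substantial and is resolved not by assembling a global witness from the local $a_p$, but by a Baire argument over the parameter space of pairs $(\pi,j)$ that produces an open subgroup $B\le A$ with $[\bar H,B]$ procyclic, after which a structural contradiction is extracted via Lemmas~\ref{procyclic_in HA}, \ref{commutator subgroup finite} and~\ref{000}.
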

\begin{proof} Without loss of generality, we can assume that $H=[H,A]$. Note that if $A$ is procyclic, then the result follows from Lemma~\ref{000}, and if $H$ is a  pro-$p$ subgroup, then the result holds in view of Lemma~\ref{3}.  Therefore the Sylow subgroups of $H$  are finite-by-procyclic. We need to show that all but finitely many of them are procyclic. Since a pro-$p$ group is procyclic if and only if its quotient by the Frattini subgroup is cyclic, it follows that $H$ is finite-by-procyclic if and only if  $H/\Phi(H)$ is so.  Hence, by passing to the quotient $G/\Phi(H)$   we can assume that all Sylow subgroups of $H$ are elementary abelian. Moreover we can also assume that all Sylow subgroups of $H$ are finite.  Indeed, given a Sylow pro-$p$ subgroup $P$ of $H$, by Lemma~\ref{3} the Sylow subgroup $P=[P,A]$ is finite-by-procyclic and  so $P$ is elementary  abelian finitely generated and then finite.

 We need to show that  all but finitely many  Sylow subgroups of $H$  are cyclic. Suppose this is false and there are infinitely many noncyclic Sylow subgroups in $H$.  Let $M$ be the product of all cyclic Sylow subgroups of $H$. It is enough to prove that the image of $H$ in the quotient group $G/M$ if finite-by-procyclic because, then, by Lemma~\ref{abelian_fbp}, $H$ is finite-by-procyclic. So without loss of generality we may factor out $M$ and assume that all Sylow subgroups of $H$ are noncyclic.  Another fact that we will use is the following: when necessary, we can factor out  the product of  finitely many Sylow subgroups of $H$.
 
 For any $a\in A$, by Lemma~\ref{000} the subgroup $[H,a]$ is finite-by-procyclic. Since $H$ is pronilpotent and all Sylow subgroups of $H$ are abelian by assumption, we deduce that $H$ itself is abelian and so each subgroup $[H,a]$ is abelian and entirely consists  of coprime commutators. Let $C_1,C_2,\ldots$ the countably many  procyclic subgroups that contain all the coprime commutators of $G$. By Lemma~\ref{coverwithcs}, for any $a\in A$, there exists an index $j(a)$ such that $[H,a]\cap C_{j(a)}$ is open in $[H,a]$ for any $a\in A$. For any $a\in A$ set $m_a=[[H,a]: [H,a]\cap C_{j(a)}]$ the finite index of $[H,a]\cap C_{j(a)}$ in $[H,a]$, for any $a\in A$. Therefore there is a finite set of prime $\pi_a=\pi(m_a)$ such   that $[H_{\pi_a'},a]\subseteq C_{j(a)}$ for any $a\in A$. Consider now, for any $a\in A$, the pair $(\pi_a,j(a))$  as defined above. Note that the set $\{(\pi,j) \mid \pi \subset \mathbb{P}, |\pi|< \infty,  j\in \mathbb{N}^*\}$ is countable. Now for any pair $(\pi,j)$, where $\pi$ is a finite set of primes and $j$ is a positive integer, set 
 $$S_{(\pi,j)}=\{a\in A \mid \, [H_{\pi'},a]\subseteq C_j \}.$$  Let us show that the sets $S_{(\pi,j)}$ are closed. Indeed, if $b\notin S_{(\pi,j)}$, then there is $p\in \pi'$ such that $[H_{p},b]\not\subseteq C_j $. Thus there is an open normal subgroup $N$ of $G$ such that in $\overline{G}=G/N$ we have $[\overline{H_{p}},\bar{b}]\not\subseteq \overline{C_j} $. Then all elements of $Nb$ do not belong to $S_{(\pi,j)}$, that is, $A\setminus S_{(\pi,j)}$ is open and so $S_{(\pi,j)}$ is closed, as claimed.    The sets $S_{(\pi,j)}$ cover $A$, so in view of  Baire's Category Theorem   there exists an element $a\in A$, an open normal subgroup $B$ in $A$, an index $j$ and a finite set of primes $\pi$ such that $aB\subseteq S_{(\pi,j)}$ and so we have $[H_{\pi'},aB]\subseteq C_j$.  
 By factoring out the product of the finitely many  Sylow subgroups of $H$ corresponding to the primes in $\pi$, we can assume that $[H,aB]\subseteq C_j$ and that $[H,aB]$ is procyclic. Since $[H,a]$ and $[H,ab]$ are contained in $C_j$  for any $b\in B$, we claim that $[H,b]\subseteq [H,a][H,ab]\subseteq C_j$ for any $b\in B$.  Indeed, for any $b\in B$ we have  that the following holds: $[H,a^b]=[H,ab_1]$ where $b_1=[a,b]\in B$  and so $[H,a^b]\subseteq C_j$  for any $b\in B$. Observe that $[H,a]$ and $[H,a^b]$ are both cyclic subgroups of the same order and both are contained in $C_j$ that is procyclic. Therefore $[H,a]=[H,a^b]$ and we deduce that $[H,a]$ is $b$-invariant for any $b\in B$. In particular $[H,a]=[H,a^b]=[H,b^{-1}][H,ab]\subseteq C_j$ and so $[H,b]\subseteq[H,a][H,ab]\subseteq C_j$ for any $b\in B$, as desired. Hence $[H,B]$ is contained in $C_j$ and so it is a procyclic subgroup.

We can reduce to the case where $[C_H(B),A]$ is procyclic. Indeed, $H=[H,B]\times C_H(B)$ and  $\bar{A}=A/C_A(C_H(B))$ is finite. Moreover $[C_H(B),A]=[C_H(B),\bar{A}]=\prod_{\bar a \in \bar A}[C_H(B), \bar a]$. In view of Lemma~\ref{000} for any $\bar a\in \bar A$, we have that $[C_H(B),\bar a]$ is finite-by-procyclic  and since $\bar A$ is finite we can quotient out  the finite parts of all the $[C_H(B),\bar a]$ and assume that $[C_H(B), a]$ is procyclic for any $ a\in  A$.   By applying Lemma~\ref{procyclic_in HA} with  $C_H(B)A$ we deduce that $[C_H(B), A]=\gamma_\infty(C_H(B)A)$ is procyclic, as desired.

So $H$ is a direct product of two procyclic subgroups $[H,B]$ and $[C_H(B),A]$. Moreover,  we can assume that  $\pi([H,B])=\pi([C_H(B),A])$. For, if $\pi([H,B])$ and $\pi([C_H(B),A])$ have finitely many primes in common, we can factor out  the finite normal subgroup $M$ obtained as product of all the Sylow subgroups corresponding to the common primes and in the quotient $H/M$ all Sylow subgroups are procyclic and of coprime order, so $H/M$ is procyclic and then $H$ will be finite-by-procyclic, a contradiction. If  $\pi([H,B])$ and $\pi([C_H(B),A])$ have infinitely many primes in common, we can factor out  the product of all Sylow subgroups corresponding to the  primes that are not in the intersection $\pi([H,B])\cap \pi([C_H(B),A])$. Thus, without loss of generality, we assume that $\pi([H,B])= \pi([C_H(B),A]).$

Let $\pi_0$ be the set of primes $p$ such that the Sylow pro-$p$ subgroup of $A$ is contained in $B$ and set $B_0=O_{\pi_0}(A)$. Denote by $A_0$ the unique complement of $B_0$ in $A$. So $A=A_0\times B_0$, where $B_0\leq B$ and $\pi(A_0)$ is finite. 
By Lemma~\ref{commutator subgroup finite} if for any $a\in A_0$ the subgroup  $[C_H(B),a]$ is finite, then the subgroup $[C_H(B),A_0]$ is finite and $H$ would be finite-by-procyclic, a contradiction. Therefore there is $a\in A_0$ such that $[C_H(B),a]$ is infinite. 
Set $\sigma=\pi([C_H(B),a])$. Note that $G/O_{\sigma '}(H)$ is still a counterexample to the lemma, so we can assume that  $H=O_\sigma(H)$ and $\pi(H)=\sigma$. Observe also that $[C_H(B),a]=[C_H(B),A]$ because they are both procyclic, $[C_H(B),a]\leq [C_H(B),A]$ with $\pi([C_H(B),a])=\pi([C_H(B),A])$ and  all Sylow subgroups are cyclic of prime order, so they must be equal.
Moreover $[H,a]$ is finite-by-procyclic and so $[H,B,a]$ must be finite, otherwise one gets a contradiction since $[C_H(B),a]$ is procyclic. Therefore we can factor out  $[H,B,a]$ and assume that  $a$ centralizes $[H,B]$. If $[H,b]$ is finite for all $b\in B$, then again by Lemma~\ref{commutator subgroup finite} we  deduce that $[H,B]$ is finite, and get a contradiction. Otherwise choose $b\in B$ such that $[H,b]$ is infinite. Set $\tau=\pi([H,b])$. As done above we can now assume that  $H=O_\tau(H)$.  Recall that $H$ is the direct product of  $[H,B]$ and $[C_H(B),a]$ and we know that $a$ centralizes $[H,B]$. Then also $b$ centralizes $[H,a]$, since $b$ is acting trivially on $C_H(B)$ and $[H,a]=C_H(B)$. From the fact that $a$ centralizes $[H,b]$ and $b$ centralizes $[H,a]$, it follows that $[H,ab]$ contains both $[H,a]$ and $[H,b]$. Therefore $[H,ab]$ is not finite-by-procyclic and, in view of Lemma~\ref{000}, this is a final contradiction. This completes the proof.
\end{proof}

\begin{proposition}\label{super} Suppose $G$ is metapronilpotent. Then $\gamma_\infty(G)$ is finite-by-procyclic.
\end{proposition}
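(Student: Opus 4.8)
The plan is to reduce the statement, via a system normalizer, to the coprime situation handled by Proposition~\ref{important}; apply that result prime by prime to the Sylow subgroups of $\gamma_\infty(G)$; and then control \emph{uniformly} how many of these Sylow subgroups fail to be procyclic.

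Put $H=\gamma_\infty(G)$; by hypothesis $H$ is pronilpotent, so $H=\prod_p H_p$ with $H_p$ the Sylow pro-$p$ subgroup of $H$, each normal in $G$ since characteristic in $H$. As $G$ is prosoluble it has a system normalizer $T$, so $G=HT$ with $T$ pronilpotent; write $T_{p'}$ for the Hall pro-$p'$ subgroup of $T$. I would first check that $\gamma_\infty(G)=\prod_p[H_p,T_{p'}]$: for a normal subgroup $N$ of $G$ contained in $H$, the quotient $G/N$ is pronilpotent precisely when distinct Sylow subgroups of $G/N$ centralise each other, and the only nontrivial cross-commutators that can occur are the $[H_q,T_r]$ with $q\ne r$; since $H=\gamma_\infty(G)$ is the least such $N$, comparing Sylow pro-$p$ subgroups gives $H_p=[H_p,T_{p'}]$ for every $p$. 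Now $H_pT_{p'}$ is a closed subgroup of $G$ in which $H_p$ is normal pronilpotent, $T_{p'}$ is pronilpotent, and $(|H_p|,|T_{p'}|)=1$, so by Proposition~\ref{important} the subgroup $[H_p,T_{p'}]=H_p$ is finite-by-procyclic. Thus every $H_p$ is finite-by-procyclic.

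It then suffices to prove that all but finitely many $H_p$ are procyclic: granting this, $H$ is finite-by-procyclic, since by Lemma~\ref{subgrchar} each of the finitely many exceptional $H_p$ has a finite characteristic subgroup with procyclic quotient, and factoring out the (finite) product of these makes $H$ a product of procyclic pro-$p$ subgroups over pairwise distinct primes, hence procyclic. For the uniformity I would pass to $\bar G=G/\Phi(H)$ (note $\Phi(H)$ is normal in $G$ and $\gamma_\infty(\bar G)=H/\Phi(H)$), so that $\bar H:=\gamma_\infty(\bar G)=\prod_p\bar H_p$ with $\bar H_p=H_p/\Phi(H_p)$ elementary abelian and finite (a finitely generated image of the finite-by-procyclic $H_p$). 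Since $\bar H$ is abelian, Lemma~\ref{profinte_systemnormalizer} gives a pronilpotent complement $\bar T$ to $\bar H$ in $\bar G$, and as above $\bar H_p=[\bar H_p,\bar T_{p'}]$, so $C_{\bar H_p}(\bar T_{p'})=0$. Assume for contradiction that $\Pi=\{p:\dim_{\mathbb F_p}\bar H_p\ge2\}$ is infinite. Here I would invoke a linear-algebraic lemma: a $p'$-group acting on $\mathbb F_p^d$ with $d\ge2$ and without nonzero fixed vectors contains an element $a$ with $\mathrm{rank}(a-1)\ge2$. (If not, each nontrivial element is of the form $v\mapsto v+\ell_a(v)m_a$ for a functional $\ell_a$ and a vector $m_a$; computing $\mathrm{rank}(g_ag_b-1)$ forces, for all $a,b$, $\ker\ell_a=\ker\ell_b$ or $\langle m_a\rangle=\langle m_b\rangle$, and since the $m_a$ span $\mathbb F_p^d$ they are not all proportional, so all the hyperplanes $\ker\ell_a$ coincide with one hyperplane contained in the centraliser, giving $d\le1$.) Applying this to the action of $\bar T_{p'}$ on $\bar H_p$ for $p\in\Pi$ yields an element $a_p\in\bar T_{p'}$ with $V_p:=[\bar H_p,a_p]$ of dimension $\ge2$; moreover $V_p=\{[h,a_p]:h\in\bar H_p\}$ consists of coprime commutators of $\bar G$. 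Fix distinct lines $L_p^0\ne L_p^1$ inside $V_p$.

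The final step, which I expect to be the main obstacle, is to splice these data together across the primes of $\Pi$: after thinning $\Pi$ to an infinite subset on which the primes ``active'' on the various $\bar H_p$ are arranged so as not to interfere (using Lemma~\ref{2}, which shows that $\bar T_{p'}$ acts on $\bar H_p$ through a finite quotient, so only finitely many primes of $\bar T$ move a given $\bar H_p$) and choosing the $a_p$ accordingly, I would show that for every $f\in\{0,1\}^\Pi$ the element $w_f=\prod_{p\in\Pi}(\text{a generator of }L_p^{f(p)})$ is again a coprime commutator of $\bar G$ — the delicate point being to exhibit a \emph{single} element $y$ with $(|x_f|,|y|)=1$ and $w_f=[x_f,y]$, rather than merely to display $w_f$ as a product of coprime commutators. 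Once this is done, if $C_1,C_2,\dots$ are procyclic subgroups covering the coprime commutators of $\bar G$, then $w_f\in C_{i(f)}$ for some index $i(f)$; since $C_{i(f)}\cap\bar H_p$ is a subgroup of the elementary abelian $\bar H_p$ of order at most $p$, the $\bar H_p$-component of $w_f$ forces $C_{i(f)}\cap\bar H_p=L_p^{f(p)}$ for every $p\in\Pi$, so $f\mapsto i(f)$ is injective on the uncountable set $\{0,1\}^\Pi$, a contradiction. Hence $\Pi$ is finite, and the proof is complete.
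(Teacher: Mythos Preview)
Your reductions through showing that each Sylow subgroup $H_p$ of $H=\gamma_\infty(G)$ is finite-by-procyclic, and then passing to $G/\Phi(H)$ so that each $\bar H_p$ is finite elementary abelian, are correct and agree with the paper (which invokes Lemma~\ref{profinite_Hall} and Proposition~\ref{important} at the same point). The difficulty is exactly where you flag it, and here the proposal has a real gap. Your plan is to thin $\Pi$ so that the finitely many primes of $\bar T$ moving the various $\bar H_p$ become disjoint and then set $y=\prod_{p\in\Pi}a_p$; but nothing prevents a \emph{single} Sylow subgroup $\bar T_q$ from acting nontrivially on every $\bar H_p$, in which case all admissible $a_p$ lie in $\bar T_q$, no disjointness is possible, and the infinite product is meaningless. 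Even when a candidate $y\in\bar T$ can be assembled, you still need $(|x_f|,|y|)=1$, and Lemma~\ref{profinte_systemnormalizer} only gives $\bar H\cap\bar T=1$, not $(|\bar H|,|\bar T|)=1$; so your $y$ may share prime divisors with $x_f\in\bar H$, and $[x_f,y]$ is then not a coprime commutator at all. Nothing in your sketch produces, for each $f\in\{0,1\}^\Pi$, a pair of coprime order realising $w_f$, so the injection $f\mapsto i(f)$ is unsupported.

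The paper proceeds quite differently at this stage. Rather than trying to manufacture uncountably many coprime commutators directly, it applies Baire to the closed set $U=\{(x,y)\in \bar H\times A:(|x|,|y|)=1\}$ of coprime \emph{pairs} (with $A$ a system normalizer acting faithfully on $\bar H$), covered by $S_i=\{(x,y)\in U:[x,y]\in C_i\}$. This yields an open box $h_0J\times a_0B$ on which all commutators land in one procyclic $C$; a careful adjustment (replacing $B$ by a Hall subgroup $B_0$, discarding finitely many Sylow subgroups of $\bar H$) upgrades this to $\gamma_\infty(\bar HB_0)\le C$, and writing $A=A_0\times B_0$ with $A_0$ finite one gets $\bar H=C_0\times C$ with $C_0=\gamma_\infty(\bar HA_0)$ procyclic as well. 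The contradiction is then extracted by an induction on the index of a maximal $B^\ast\ge B$ with $\gamma_\infty(\bar HB^\ast)$ procyclic, together with a second Baire argument on a coset $aB$. The point is that Baire on pairs delivers simultaneous control over an open set of $(x,y)$, which a prime-by-prime construction cannot provide.
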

\begin{proof} Set $H=\gamma_\infty(G)$.  By passing to the quotient $G/\Phi(H)$   we can assume that all Sylow subgroups of $H$ are elementary abelian. Moreover we can also assume that all Sylow subgroups of $H$ are finite.  Indeed, given a Sylow $p$-subgroup $P$ of $H$, by Lemma~\ref{profinite_Hall} we have $P=[P,K]$, where $K$ is a Hall $p'$-subgroup of $G$. In view of Proposition~\ref{important} the Sylow subgroup $P=[P,K/C_K(P)]$ is finite-by-procyclic, so $P$ is elementary  abelian finitely generated and then finite.  
Once assumed that all Sylow subgroups of $H$ are elementary abelian and finite, we need to show that all but finitely many  of them are cyclic. Suppose this is false and there are infinitely many noncyclic Sylow subgroups in $H$.   As in the proof of Lemma~\ref{important},  without loss of generality, we can factor out  all the cyclic Sylow subgroups of $H$ and assume that all Sylow subgroups of $H$ are noncyclic.  
Moreover, we can factor out $O_{\pi}(H)$ whenever $\pi$ is a subset of $\pi(H)$ such that $\pi(H)\setminus \pi$ is infinite. This is because $G$ is a counterexample to the proposition if and only if $G/O_{\pi}(H)$ is. In particular, when necessary, we can also factor out the product of  finitely many Sylow subgroups of $H$.

 Let $A$ be any system normalizer in $G$. Since $H$ is abelian,  by Lemma~\ref{profinte_systemnormalizer} we have $G=HA$ and $A\cap H=1$. Note also that $A$ is pronilpotent (see \cite[Proposition 2.3.9]{rib-zal}). We also  have $H=[H,A]$, since $H=\gamma_\infty(G)$.  The centralizer $C_A(H)$ is a normal subgroup of $G$  and so $H$ is finite-by-procyclic if and only if the image of $H$ in the quotient $G/C_A(H)$ is finite-by-procyclic. Therefore, without loss of generality, by considering the quotient $G/C_A(H)$ we can assume that $A$ acts faithfully on $H$.

Set $$U=\{(x,y);\ x\in H,\ y\in A \text{ such that } (|x|,|y|)=1 \}.$$ Thus, $U$ is the subset of $H\times A$ that consists of pairs of elements of coprime orders. Moreover $U$ is closed in the profinite topology of $H\times A$ and it is covered by countably many closed subsets $S_i$, where $S_i=\{(x,y)\in U \mid [x,y]\in C_i\}$ and  the $C_i$'s are the countably many procyclic subgroups that cover all coprime commutators in $G$. By  Baire's Category Theorem at least one of those $S_i$ has non-empty interior. Thus  there exists a procyclic subgroup $C$, open normal subgroups $B\leq A$ and $J\leq H$, and elements $a_0\in A$ and $h_0\in H$ such that $(|h_0|,|a_0|)=1$ and $[x,y]\in C$ whenever $x\in h_0J$, $y\in a_0B$ with $(|x|,|y|)=1$.  Observe that $J$ can actually be taken characteristic  in $H$ because an open normal subgroup $M$ of $H$ contains all but finitely many  Sylow subgroups of $H$ and the product of those Sylow subgroups contained in $M$ is indeed a characteristic subgroup of $H$.  

 In view of Remark~\ref{Re2} we can assume, without loss of generality,  that $\pi(h_0)$ and  $\pi(a_0)$ are both finite.  By factoring out   finitely many Sylow subgroups of $H$,  corresponding to the finitely many prime divisors of $|a_0|$, we can also assume that $(|a_0|, |H|)=1$. Since $\pi(h_0)$ is finite, the subgroup $O_{\pi(h_0)}(H)$ is a finite characteristic subgroup, being the product of finitely many noncyclic Sylow subgroups of $H$ corresponding to the prime divisors of $|h_0|$. It follows that the centralizer $C_G(O_{\pi(h_0)}(H))$ is open in $G$, say of finite index $l$. Set now $B_0=O_{(\pi(a_0)\cup \pi(h_0)\cup \pi(l))'}(B)$ and observe that $\pi(A)\setminus \pi(B_0)$ is finite, since it is contained in $\pi(a_0)\cup \pi(h_0)\cup \pi(l)$. Moreover $B_0$ is a product of Sylow subgroups of $A$. 
 
 Write $A=A_0\times B_0$. By factoring out  the finitely many Sylow subgroups of $H$ corresponding to the primes in $\pi(A_0)$ we can reduce to the case where $\pi(H)\cap\pi(A_0)=\emptyset$.  Since $A_0$ and $H$ are pronilpotent and of coprime orders, Proposition~\ref{important} tell us that $\gamma_\infty(HA_0)=C_0$ is finite-by-procyclic.  By Remark~\ref{Re1} we can factor out $O_{\tau}(H)$, where $\tau$ is the set of primes dividing the order of the finite  part of $C_0$ and  assume that $C_0$ is procyclic. Note that the hypothesis that all Sylow subgroups of $H$ are noncyclic still holds.

 Next, we claim that whenever $x\in J$ and $y\in B_0$ with  $(|x|,|y|)=1$, the commutator $[x,y]$ belongs to $C$. Indeed, since $A$ and $H$ are both pronilpotent and, by the definition of $B_0$, we have $1=(|a_0|, |y|)=(|a_0|, |H|)=(|y|,|x|)=(|y|,|h_0|)$, we deduce that $\pi(a_0y)=\pi(a_0)\cup\pi(y)$ and $\pi(h_0x)\subseteq \pi(h_0)\cup \pi(x)$. Then $(|a_0y|, |h_0x|)=1$. Note that $[a_0y,h_0x]\in C$ and $[a_0y,h_0x]=[a_0,x]^y[y,x][a_0,h_0]^y[y,h_0]$.   Now $[a_0,x]\in C$, since $[a_0,h_0x]$ and $[a_0,h_0]$ both are in $C$.  Since $J$ is characteristic in $H$ and $A$ acts on $H$, we also obtain that $[a_0,x]^y\in C$. Finally since $[a_0,h_0]\in C$ and $B_0$ centralizes $h_0$ (by the definition of $B_0$), we conclude that $[a_0,h_0]^y\in C$  and $[y,h_0]=1$, therefore  $[y,x]$ belongs to $C$ whenever $x\in J$, $y\in B_0$ and $(|x|,|y|)=1$, as claimed.  
 
It follows that $\gamma_\infty(JB_0)\leq C$. Indeed, since $JB_0$ is metapronilpotent, in view of Lemma \ref{profinite_Hall}  $\gamma_\infty(JB_0)$ is generated by  commutators of the form $[b,j]$ with $j$ is a $p$-element of $J$ and $b$ is a $p'$-element of $B_0$. Since $J$ has finite index in $H$, by factoring out  the finitely many Sylow subgroups of $H$ corresponding to the prime divisors of $[H:J]$, we can now assume that $\gamma_\infty(HB_0)\leq C$ and set $C_1=\gamma_\infty(HB_0)$. Note that $H=C_0C_1$. If $C_1$ is finite, then $\gamma_\infty(G)$ is finite-by-procyclic and we are done. Assume that $C_1$ is infinite and  write $C=C_1\times C^{0}$. Set $\pi=\pi(C^0)$. Since $\pi(H)\setminus \pi$ is infinite we can  factor out  $O_{\pi}(H)$  assume that $C_1=C$.

 It follows that $H=C_0\times C$. Moreover $\pi(C)=\pi(C_0)$ because if one prime $p\in \pi(C)$ and does not belong to $\pi(C_0)$, then the corresponding Sylow subgroup of $H$ would be cyclic and this leads to a contradiction since we have assumed that all Sylow subgroups of $H$ are noncyclic. 
Furthermore we claim that  $C\leq C_H(A_0)$  and  $C_0\leq C_H(B_0)$. Indeed, on the one hand $A_0$ acts on $C_0\times C=C_H(A_0)[H,A_0]$, so $C=C_H(A_0)$. On the other hand $B_0$ acts on $C_0=[H,A_0]$ and  $\gamma_\infty(C_0B_0)=1$ since $$C=\gamma_\infty(HB_0)=[H,B_0]=[C_0\times C, B_0]=\gamma_\infty(C_0B_0)\times C.$$ Therefore $C_0B_0$ is pronilpotent and $C_0$ is central in $C_0B_0$ being the cartesian product of normal cyclic subgroups of prime order. In particular $C_0\leq C_H(B_0)$, as claimed. 

With these assumptions $A_0\cap B$ centralizes $H$. Indeed, if $x\in A_0\cap B$,  as $(|a_0x|, |H|)=1$, then  we have $[H,a_0x]\leq C$ and $[H,a_0x]\leq C_0$, so for any $x\in A_0\cap B$ the element $a_0x$ acts trivially on $H$ because $C\cap C_0=1$. In particular $a_0$ acts trivially on $H$ and so does $x$, for any $x\in A_0\cap B$. Note that $B=(B\cap A_0)\times B_0$ and from the fact that $B\cap A_0$ centralizes $H$ it follows that $[H,B]=[H,B_0]$. Since $A$ acts faithfully on $H$, we have $ A_0\cap B=1$. In particular $A_0$ is  finite because $B$ is open in $A$. 

Since $\gamma_\infty(HB_0)=\gamma_\infty(HB)$ is procyclic, $A$ has an open normal subgroup $B^{*}$ which  is maximal with respect to the property that $B\leq B^{*}$ and $\gamma_\infty(HB^{*})$ is procyclic. Let us summarize what we have:  $H=C\times C_0$ and $\pi(H)=\pi(C)=\pi(C_0)$ with $C=\gamma_\infty(HB)=\gamma_\infty(HB^*)$. Next we argue by induction on the  index $r$ of  $B^{*}$ in $A$. If $r=1$, then $B^{*}=A$ and there is nothing to prove. 

If $r$ is prime, then we have $A=\langle a \rangle B^{*}$. Since $B\leq B^*$, the element $a$ can be chosen inside $A_0$. Remark that $\gamma_\infty(HB^*)=C$ and $\gamma_\infty(HA_0)=C_0$.   By the above argument, $A_0\cap B^*=1$ and so $A_0=\langle a\rangle$ while $B^*=B$.
Set $K=aB$ and  $$W=\{(x,y);\ x\in H,\ y\in K \text{ such that } (|x|,|y|)=1 \}.$$ Thus, $W$ is the subset of $H\times K$ that consists of pairs of elements of coprime orders. Note that $W$ is closed  and  is covered by countably many closed subsets $T_i$, where $T_i=\{(x,y)\in W \mid [x,y]\in C_i\}$, where  the $C_i$'s are the countably many procyclic subgroups that cover all coprime commutators in $G$. Again by  Baire's Category Theorem at least one of those $T_i$ has non-empty interior.  Thus  there exists a procyclic subgroup $C_2$, open normal subgroups $B_1\leq B$  and $H_1\leq H$, and elements $b_0\in B$ and $h_1\in H$ such that  $[x,y]\in C_2$ whenever $x\in h_1H_1$, $y\in ab_0B_1$ with $(|x|,|y|)=1$. As above, in view of Remark~\ref{Re2} we can assume that $\pi(ab_0)$ is finite and by factoring out  the product of the finitely many Sylow subgroups of $H$  corresponding to the  prime divisors of $|ab_0|$ and of the index of $H_1$ in $H$, we can also assume that $H=H_1$, the element $h_1=1$ and $(|ab_0|, |H|)=1$. 
Next we assume that $[H,a]$ is infinite, since if $[H,a]$ were finite, then $[H,A_0]=C_0$ would be finite and so $H$ would be finite-by-procyclic, a contradiction. Set $\tau_0=\pi(H)\setminus \pi([H,a])$. By  factoring out  $O_{\tau_0}(H)$ we can also assume that $ \pi(H)=\pi([H,a])$.   If $[H,b_0]$ is infinite, then we get a contradiction because $[H,b_0]\leq C_0$ and $\pi([H,b_0])\subseteq \pi([H,a])=\pi(H)$, so $[H,ab_0]=[H,b_0]^a[H,a]\leq CC_0$ would be non-procyclic but at the same time $[H,ab_0]\leq C_2$ would  also be procyclic. Assume now that $[H,b_0]$ is finite. By factoring out  $[H,b_0]$ we can also assume that $[H,b_0]=1$. Then, on the one hand we have $[H,ab_0]\leq C_2$; on the other hand we also have $[H,ab_0]=[H,a]\leq C_0$. Since $H=C_0\times C$  and $\pi(C_0)=\pi(C)$, we deduce that $C_2=C_0$.  Therefore for any $b\in B_1$ we have $[H,ab_0b]=[H,ab]=[H,b][H,a]\leq C_0$. Since also $[H,a]\leq C_0$, we deduce that $[H,b] \leq C_0$ for any $b\in B_1$. Since for any $b\in B_1$ the commutator subgroup $[H,b]$ is also contained in $C$  and $C_0\cap C=1$, we have $[H,B_1]=1$. It follows that $[H,B]=[H,B/B_1]$ and also $[H,A]=[H,A/B_1]$. Set $\omega=\pi(A/B_1)$  and  by factoring out  $O_{\omega}(H)$ we can assume that $(|H|,|A|)=1$. In view of Lemma~\ref{important}, the subgroup $H$ is finite-by-procyclic, a contradiction.

Assume now that $r$ is not a prime. Then $A=A_0B^{*}$ and $|A_0|$ is not a prime. Since $A_0$ is a finite nilpotent group, let $A_1$ be a normal subgroup of prime index in $A_0$ and let $B_2=A_1\times B$. Note that $B_2$ is normal in $A$ and $B_2$ acts on $H$.  We apply the inductive hypothesis to the action of $B_2$ on $H$, since the index of $B^{*}$ in $B_2$ is smaller than the index of $B^{*}$ in $A$, and deduce that $\gamma_\infty(HB_2)$ is finite-by-procyclic. In view of Remark~\ref{Re2} we can also assume that $\gamma_\infty(HB_2)$ is procyclic. Since $[H,B_0]=C$  is a maximal procyclic subgroup of $H$ and $\gamma_\infty(HB_0)\leq \gamma_\infty(HB_2)$, we deduce that $[H,B_2]=C$. Hence, using the previous notation, we conclude that $B_2=B^*$. Taking into account that $[A:B_2]$ is a prime, it follows that $r$ is a prime. In view of the above, the proof is now complete. 

\end{proof}

Recall that a group $K$ possesses a certain property virtually if it has an open subgroup with that property.
\begin{lemma}\label{vpronilpotent} Suppose $G$ is virtually pronilpotent. Then $\gamma_\infty(G)$ is finite-by-procyclic.
\end{lemma}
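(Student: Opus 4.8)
The plan is to reduce, after the usual preliminary normalisations, to the metapronilpotent case settled in Proposition~\ref{super}. We may assume $G$ is infinite, so that $\gamma_\infty(G)\neq 1$. Replacing the given open pronilpotent subgroup by its normal core — which is again open, normal and pronilpotent, being a closed subgroup of a pronilpotent group — we fix an open normal pronilpotent subgroup $N$ of $G$ and set $L=\gamma_\infty(G)$. Then $L\cap N$ is pronilpotent, normal in $G$, and of finite index in $L$, and its pronilpotent overgroup $F(L)=L\cap F(G)$ (the Fitting subgroup of $L$) is therefore also of finite index in $L$; thus $L$ is pronilpotent-by-finite. Exactly as in the opening paragraphs of Propositions~\ref{important} and~\ref{super}, I would pass to $G/\Phi(F(L))$ and assume that every Sylow subgroup of $F(L)$ is elementary abelian, and then — discarding finitely many of them — that all but finitely many of them are finite; by Remark~\ref{Re1} it suffices to treat the resulting quotient.

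Next I would split off the primes occurring in the finite quotient. Put $\pi_0=\pi(G/N)$, a finite set, and write $N=N_{\pi_0}\times N_{\pi_0'}$ with $N_{\pi_0}=O_{\pi_0}(N)$ and $N_{\pi_0'}=O_{\pi_0'}(N)$; both are normal in $G$, and $N_{\pi_0'}=O_{\pi_0'}(G)$ is a normal Hall $\pi_0'$-subgroup, so by the profinite Schur--Zassenhaus theorem $G=N_{\pi_0'}\rtimes Q$ with $(|N_{\pi_0'}|,|Q|)=1$, where $N_{\pi_0'}$ is pronilpotent and $Q\cong G/N_{\pi_0'}$ is virtually pronilpotent with set of primes contained in the finite set $\pi_0$. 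I would then analyse $L$ through its two ``halves''. The image of $L$ in $G/N_{\pi_0'}$ is $\gamma_\infty(Q)$, while $L\cap N_{\pi_0'}$ is a pronilpotent normal subgroup of $G$ generated by the coprime commutators $[n,q]$ with $n\in N_{\pi_0'}$, $q\in Q$. For the latter, Lemma~\ref{000} applied to the procyclic subgroups $\langle q\rangle$ shows that each $[N_{\pi_0'},q]$ is finite-by-procyclic; a Baire-category argument on the covering $C_1,C_2,\dots$ of the set of coprime commutators — localising, as in the proof of Proposition~\ref{super}, on an open coset of $Q$ and a single $C_j$, then stripping off one Sylow subgroup of $N_{\pi_0'}$ at a time via $O_{\sigma'}(\cdot)$-quotients — forces $[N_{\pi_0'},q]\le C_j$ for $q$ in that coset, so by Lemma~\ref{procyclic_in HA} and Lemma~\ref{commutator subgroup finite} the relevant commutator subgroup is procyclic on a cofinite set of primes; Lemma~\ref{abelian_fbp}, whose use of the coprimality of the Sylow parts is essential here, then assembles $L\cap N_{\pi_0'}$ into a finite-by-procyclic group. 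Finally, after discarding finitely many Sylow subgroups so that the $\pi_0$-part and the $\pi_0'$-part have coprime orders, one glues them, using that a product of two finite-by-procyclic groups of coprime orders is finite-by-procyclic, exactly as in Case~3 of Proposition~\ref{important} and at the end of Proposition~\ref{super}.

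The main obstacle is the $\pi_0$-half, i.e. $\gamma_\infty(Q)$ for $Q$ virtually pronilpotent with finitely many primes: such a $Q$ need not be metapronilpotent — for instance $\gamma_\infty\bigl(A_5\times(\mathbb{Z}_7\rtimes C_3)\bigr)=A_5\times\mathbb{Z}_7$, whose own pronilpotent residual is the non-pronilpotent group $A_5$ — so Proposition~\ref{super} cannot be quoted on the nose. The point is that the covering hypothesis rules out the dangerous configurations: by Lemma~\ref{coverwithcs}, any subgroup of $L$ consisting entirely of coprime commutators is finite-by-procyclic, which, combined with the coprime-action structure, prevents a section of the pronilpotent residual of $L$ from looking like $\mathbb{Z}_p\times\mathbb{Z}_p$, and ultimately forces $\gamma_\infty(L)$ to be finite after the standard reductions. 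Absorbing that finite characteristic subgroup via Lemma~\ref{subgrchar} and Remark~\ref{Re1}, one reaches a quotient in which $\gamma_\infty(G)$ is pronilpotent — that is, $G$ is metapronilpotent — and Proposition~\ref{super} delivers the conclusion.

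I expect that, beyond the invocation of Proposition~\ref{super}, the bulk of the work is the repeated ``factor out finitely many Sylow subgroups'' bookkeeping needed to keep the pieces being combined of pairwise coprime orders and to control the exceptional primes coming from $\pi_0$ and from the Baire-category reductions, all carried out in the spirit of the closing arguments of Propositions~\ref{important} and~\ref{super}.
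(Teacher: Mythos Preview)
Your outline diverges from the paper's proof in a substantive way, and the point at which it diverges is exactly where it breaks down.

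The paper argues by induction on $[G:F(G)]$. When $G/F(G)$ has a proper normal subgroup, induction plus a Schur-type centralizer argument reduces to the metapronilpotent case (Proposition~\ref{super}). The genuinely new case is $G/F(G)$ nonabelian simple, and here the paper invokes a structural fact about finite simple groups (Gorenstein, Theorem~7.4.5): for some prime $q$ there is a nontrivial $q$-subgroup $\bar Q$ and a $q'$-element $\bar a$ with $\bar Q=[\bar Q,\bar a]$. The preimage $T$ of $\bar Q\langle\bar a\rangle$ is prosoluble, so the already-proved soluble case gives $\gamma_\infty(T)$ finite-by-procyclic; this forces $Q$ to centralize $F$, hence $[G,Q]$ is central-by-finite, and after killing a finite subgroup $G$ becomes prosoluble. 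In other words, the nonabelian simple section is \emph{eliminated} by a targeted argument, not by any general covering/Baire machinery.

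Your plan sidesteps the induction and tries instead to split off the Hall $\pi_0'$-part, where $\pi_0=\pi(G/N)$. You correctly identify the $\pi_0$-part $\gamma_\infty(Q)$ as the obstacle and even exhibit an example ($A_5\times(\Bbb Z_7\rtimes C_3)$) showing $Q$ can carry a nonabelian simple section. But the sentence ``the covering hypothesis rules out the dangerous configurations: by Lemma~\ref{coverwithcs} \dots\ ultimately forces $\gamma_\infty(L)$ to be finite after the standard reductions'' is not a proof. Lemma~\ref{coverwithcs} only says that a subgroup \emph{consisting entirely of coprime commutators} meets some $C_j$ in an open subgroup; it gives you nothing about $\gamma_\infty(\gamma_\infty(G))$, whose elements are not, in general, individual coprime commutators. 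Nothing in your sketch produces the crucial fact that, modulo a finite normal subgroup, $G$ becomes prosoluble (equivalently, that the nonabelian simple factor disappears). That step is precisely what the paper achieves via Gorenstein's theorem, and your outline has no substitute for it.

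There are two secondary issues as well. First, in the $\pi_0'$-half you invoke Lemma~\ref{procyclic_in HA}, but that lemma requires the acting group $A$ to be pronilpotent, and your $Q$ is only virtually pronilpotent; you would need to pass to an open pronilpotent $Q_0\le Q$ (landing in Proposition~\ref{super} for $N_{\pi_0'}Q_0$) and then bridge the gap to $Q$, which is again the same unresolved problem. Second, ``a product of two finite-by-procyclic groups of coprime orders is finite-by-procyclic'' is false for semidirect products (take $\Bbb Z_p\rtimes C_q$ with $C_q$ acting nontrivially would be fine, but an action making the extension non-procyclic-by-finite can be arranged when the pieces are larger); the paper's gluing at the end of Lemma~\ref{vpronilpotent} works because the two quotients $G/Q_0$ and $G/F_0$ are handled separately and then $G$ embeds in their product with the procyclic parts of \emph{coprime} order, which is a more delicate statement than what you wrote.

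In short: the missing idea is the elimination of nonabelian simple sections of $G/F(G)$, and that is where the paper does real work that your sketch does not replace.
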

\begin{proof} Let $F=F(G)$. We argue by induction on the index  of $F$ in $G$.  If  $G$ is pronilpotent  there is nothing to prove.
Assume that $[G:F]$ is at least two. 
 
Suppose first that  $G/F$ has a proper normal subgroup $H/F$. By induction, $\gamma_\infty(H)$ is finite-by-procyclic. In view of Remark~\ref{Re1}  we can assume that $\gamma_\infty(H)$ is procyclic. Set  $K=\gamma_\infty(G)$. By induction $K/\gamma_\infty(H)$ is finite-by-procyclic. So $G$ contains a normal subgroup $N$ such that  $\gamma_\infty(H)\leq N\leq  K$ and  $N/\gamma_\infty(H)$ is finite while $K/N$ is procyclic. Note that $\gamma_\infty(H)$ is normal in $G$ and so $G/C_G(\gamma_\infty(H))$ is abelian. In particular   $\gamma_\infty(H)\leq Z(G')$. It follows that  $[N:Z(N)]$ is finite and, by Schur Theorem, that $N'$ is finite.
 In view of Remark~\ref{Re1} we can factor out $N'$ and assume that $N$ is abelian, so  $K$ is metabelian. Hence $\gamma_\infty(K)$ is  contained in $N$.  In view of Lemma~\ref{intersection_with_center} we have that  $\gamma_\infty(K)\cap Z(K)=1$ and so  $\gamma_\infty(K)$ is finite. Again in view of Remark~\ref{Re1} we can factor out $\gamma_\infty(K)$ and assume that $K= \gamma_\infty(G)$ is pronilpotent. Thus $G$ is metapronilpotent and, by Lemma~\ref{super}, the subgroup  $\gamma_\infty(G)$ is finite-by-procyclic, as desired.

Therefore it remains to deal with the case where $\bar G=G/F$ is simple. If $\bar G$ is abelian, then $G$ is metapronilpotent  and the result follows from Lemma~\ref{super}.   We assume now that $\bar G$ is nonabelian.  In view of \cite[Theorem 7.4.5]{Gorenstein}, we know that for any prime $q\in\pi(\bar G)$ there is a nontrivial $q$-subgroup $\bar Q\leq\bar G$ and a $q'$-element $\bar a\in\bar G$ such that $\bar Q=[\bar Q,\bar a]$.

Choose $q\in\pi(\bar G)$, a $q$-subgroup $\bar Q\leq\bar G$ and an element $\bar a\in\bar G$ as above and suppose first that $F$ is a pro-$q'$ group. Let $T$ be the full preimage of $\bar Q\langle\bar a\rangle$ in $G$. Since $T$ is prosoluble,  $T/F(T)$ is finite soluble. 
From the case discussed above where $G/F$ was not simple, we deduce that $\gamma_\infty(T)$ is finite-by-procyclic. By Lemma \ref{subgrchar} $\gamma_\infty(T)$ contains a finite characteristic subgroup $M_0$ such that $\gamma_\infty(T)/M_0$ is procyclic.  Then $M=M_0^G$ is a finite normal subgroup of $G$ such that $\gamma_\infty(T)M/M$ is procyclic.   
We quotient out  $M$ and assume that $\gamma_\infty(T)$ is procyclic. Let $Q$ be a Sylow $q$-subgroup of $T$. Since the group of automorphisms of a procyclic group is abelian,  $T'$ centralizes $\gamma_\infty(T)$. Moreover $Q=[Q,a]$, where $\langle a \rangle F$ is the full preimage of $\langle \bar a \rangle$ in $G$. In particular $Q$ is contained in $T'$ and so   $Q$ centralizes $\gamma_\infty(T)$.   By Lemma~\ref{centralizer_in_theFitting}, since $T$  is procyclic-by-pronilpotent, we know that $C_T(\gamma_\infty(T))$ is contained in $F(T)$. Thus we deduce that $Q\leq F(T)$. Keeping in mind that $F$ is a pro-$q'$ group, we deduce that  $Q$ centralizes $F$.  Since $F$ is normal in $G$ and $Q\leq C_G(F)$, we obtain also that $[G,Q]$ centralizes $F$. Therefore $[G,Q]$ is central-by-finite and, by Schur Theorem,  $[G,Q]'$ is finite. We pass to the quotient $G/[G,Q]'$ and assume that $[G,Q]$ is abelian. Then $\bar G=[\bar G,\bar Q]$ and so $G=[G,Q]F$ is prosoluble. Note that if $F$ is a pro-$q'$ group, then the Sylow $q$-subgroup of $\gamma_\infty(G)$ is finite and therefore,  again by the case discussed above, we obtain that $\gamma_\infty(G)$ is finite-by-procyclic.

Finally we   drop the assumption that $F$ is a pro-$q'$ group. Write $F=Q_0\times F_0$, where $Q_0$ is the (unique) Sylow $q$-subgroup of $F$ and $F_0=O_q'(F)$. In view of the  above the image of $\gamma_\infty(G)$ in $G/Q_0$ is finite-by-procyclic. The same is true with regard to the image of $\gamma_\infty(G)$ in $G/F_0$: indeed, if $F$ is a pro-$q$ group, we choose a prime $q\neq p\in\pi(\bar G)$, a nontrivial $p$-subgroup $\bar P\leq\bar G$ and a $p'$-element $\bar b\in\bar G$ such that $\bar P=[\bar P,\bar b]$ and  repeat the argument of the previous paragraph. Thus, the images of $\gamma_\infty(G)$, respectively  in $G/F_0$ and $G/Q_0$, both are finite-by-procyclic with the respective procyclic sections of coprime orders. The result follows.
\end{proof}

\begin{lemma}\label{metapronilpotent} The group $G$ is  pronilpotent-by-finite-by-pronilpotent.
\end{lemma}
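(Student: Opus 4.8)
The statement is equivalent to the assertion that $W:=\gamma_\infty(G)$ is virtually pronilpotent. Indeed, if the Fitting subgroup $F(W)$ has finite index in $W$, then $F(W)$ is pronilpotent and characteristic in $W$, hence normal in $G$, $W/F(W)$ is finite, and $G/W$ is pronilpotent, so the chain $1\le F(W)\le W\le G$ exhibits $G$ as pronilpotent-by-finite-by-pronilpotent; conversely any such chain forces $\gamma_\infty(G)$ into a pronilpotent-by-finite subgroup. By Lemma~\ref{fbp_implies_pbf} applied to $W$, it even suffices to find a finite normal subgroup $N$ of $W$ with $W/N$ pronilpotent, that is, to show that $\gamma_\infty(W)=\gamma_\infty(\gamma_\infty(G))$ is finite; by Theorem~\ref{DMS21} this means exhibiting fewer than $2^{\aleph_0}$ coprime commutators inside $W$, which is not automatic from the hypothesis since the coprime commutators of $W$, though confined to the countably many procyclic subgroups $C_i\cap W$, could a priori still be continuum many.

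I would proceed through a sequence of reductions of the kind driving the proofs of Propositions~\ref{important} and~\ref{super}, beginning with the Baire Category step that also underlies Lemmas~\ref{coverwithcs} and~\ref{useful}: with $C_1,C_2,\dots$ a procyclic cover of the coprime commutators of $G$, the closed set $U=\{(x,y)\in G\times G:(|x|,|y|)=1\}$ is covered by the closed sets $U_i=\{(x,y)\in U:[x,y]\in C_i\}$, so some $U_j$ has non-empty interior and pins down an open normal subgroup $M\le G$, elements $x_0,y_0$ and an index $j$ such that $[x,y]\in C_j$ whenever $x\in x_0M$, $y\in y_0M$ and $(|x|,|y|)=1$. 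Combining this with Remark~\ref{Re2}, the coprime-action facts of Lemma~\ref{lem:standard}, passage to Frattini quotients, and Remark~\ref{Re1}, I would reduce to configurations in which the pronilpotent ``layers'' at issue have finite elementary abelian Sylow subgroups and the actions on all but finitely many of them are coprime and funnelled through a single $C_j$; the Fitting contribution is then governed by Lemmas~\ref{2} and~\ref{3} and Proposition~\ref{important}, while any non-prosoluble part is peeled off exactly as in the proof of Lemma~\ref{vpronilpotent}, using the fact from \cite[Theorem 7.4.5]{Gorenstein} that a finite nonabelian simple group has, for each relevant prime $q$, a $q$-subgroup $\bar Q=[\bar Q,\bar a]$ with $\bar a$ a $q'$-element.

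The residual layers then fall under cases already settled: metapronilpotent sections are treated by Proposition~\ref{super}, virtually pronilpotent ones by Lemma~\ref{vpronilpotent}. The decisive point is that both outputs are \emph{procyclic} modulo a finite subgroup, not merely pronilpotent, so the finitely many procyclic-by-finite pieces produced this way, living by construction over controlled and pairwise essentially coprime sets of primes, can be amalgamated (using Lemma~\ref{abelian_fbp} on the abelian Frattini quotients and then lifting) into a single finite-by-procyclic normal subgroup of finite index in $W$. This would yield that $W$ is virtually pronilpotent, as required.

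The main obstacle is that $W=\gamma_\infty(G)$ carries no structure a priori: it need not be prosoluble, pronilpotent, or even metapronilpotent, so there is no ready-made Sylow or Fitting decomposition of $W$ to start from, and the naive reductions only show $W$ virtually \emph{metapronilpotent}, which is strictly weaker than what is needed. The delicate task is to convert the smallness of the generating set, namely the coprime commutators confined to countably many procyclic subgroups, into smallness of the non-pronilpotent part of $W$, by verifying that the extra procyclic and finite layers collected during the reductions lie over coprime sets of primes and so collapse via Lemma~\ref{abelian_fbp}, rather than stacking up into an unbounded Fitting height.
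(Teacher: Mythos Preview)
Your proposal is a plan rather than a proof, and you yourself flag the unresolved obstacle: you want to show $W=\gamma_\infty(G)$ is virtually pronilpotent by direct structural analysis, but $W$ has no a priori decomposition to start from, your reductions only reach ``virtually metapronilpotent'', and the amalgamation step in your last paragraph (that the procyclic-by-finite pieces live over essentially coprime sets of primes and collapse via Lemma~\ref{abelian_fbp}) is asserted without argument. Invoking Propositions~\ref{important} and~\ref{super} and Lemma~\ref{vpronilpotent} is not circular, since they precede this lemma, but each of them requires hypotheses (metapronilpotent, virtually pronilpotent, or a coprime factorisation $HA$) that you have not shown hold for $W$ or for its relevant sections.

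The paper's argument is short and avoids any structural analysis of $W$; the key idea you are missing is to work with the individual coprime commutators directly. For each coprime commutator $x$, Lemma~\ref{useful} makes $N_G(\langle x\rangle)$ open; choose an open normal $K\le N_G(\langle x\rangle)$ of index $e=e(x)$, so that $x^e\in K$ and every conjugate $\langle (x^e)^g\rangle$ is normal in $K$. These procyclic conjugates therefore normalize one another, and their product $M(x)$ is a pronilpotent normal subgroup of $G$. Set $N=\prod_x M(x)$, still pronilpotent and normal in $G$. In $G/N$ every coprime commutator has finite order (since $x^{e(x)}\in N$), and a procyclic group contains only countably many torsion elements; hence $G/N$ has only countably many coprime commutators, and Theorem~\ref{DMS21} makes $G/N$ finite-by-pronilpotent. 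The resulting chain exhibits $G$ as pronilpotent-by-finite-by-pronilpotent. No Baire argument on $G\times G$, no passage to Frattini quotients, and no Sylow or Fitting analysis is needed at this stage.
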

\begin{proof}   Given any coprime commutator $x\in G$, by Lemma~\ref{useful}  the subgroup $N_G(\langle x\rangle)$ has finite index in $G$. Consider now an open normal subgroup $K$ contained in $N_G(\langle x\rangle)$ and let  $e=e(x)=[N_G(\langle x\rangle):K]$. Then  $x^e\in K$. Observe that for any $g\in G$ the procyclic subgroup $\langle (x^e)^g\rangle$ is normal in $K$ and all the procyclic subgroups of such type normalize each other. Now let $M(x)$ be the subgroup generated by all those normal procyclic subgroups of type $\langle (x^e)^g\rangle$, for  $g\in G$. Since a subgroup generated by normal procyclic subgroups is pronilpotent, it follows that $M(x)$ is a normal pronilpotent subgroup.  So the subgroup $N=\prod_{x\in G}M(x)$ is pronilpotent as well.  In the quotient $G/N$ any image of  a coprime commutator has finite order. Therefore $G/N$ has only countably many coprime commutators and so, by Theorem \ref{DMS21}, $G/N$ is finite-by-pronilpotent. The result follows.
\end{proof}

\begin{lemma}\label{prosolu} 
The group $G$ is finite-by-prosoluble.
\end{lemma}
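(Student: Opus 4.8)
The plan is to derive this from Lemma~\ref{metapronilpotent} and Lemma~\ref{vpronilpotent}, the decisive point being to apply the latter not to $G$ itself (which need not be virtually pronilpotent) but to its pronilpotent residual $\gamma_\infty(G)$. By Lemma~\ref{metapronilpotent} the group $G$ has a normal subgroup $R$ which is virtually pronilpotent and with $G/R$ pronilpotent; in particular $\gamma_\infty(G)\leq R$. Fixing an open pronilpotent subgroup $P$ of $R$, the intersection $\gamma_\infty(G)\cap P$ has finite index in $\gamma_\infty(G)$ and, as a closed subgroup of a pronilpotent profinite group, is itself pronilpotent. Hence $\gamma_\infty(G)$ is virtually pronilpotent.

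Since the hypothesis on coprime commutators is inherited by subgroups, $\gamma_\infty(G)$ satisfies it as well, so Lemma~\ref{vpronilpotent} applied to $\gamma_\infty(G)$ shows that $\gamma_\infty(\gamma_\infty(G))$ is finite-by-procyclic. By Lemma~\ref{subgrchar} it has a finite characteristic subgroup $N$ with procyclic quotient, and $N$ is normal in $G$ because $\gamma_\infty(\gamma_\infty(G))$ is characteristic in $G$. Passing to $G/N$: since $N\leq\gamma_\infty(\gamma_\infty(G))\leq\gamma_\infty(G)$ we have $\gamma_\infty(\gamma_\infty(G/N))=\gamma_\infty(\gamma_\infty(G))/N$, which is procyclic, hence abelian. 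Thus $\gamma_\infty(G/N)$ is abelian-by-pronilpotent, so prosoluble; and as $(G/N)/\gamma_\infty(G/N)$ is pronilpotent, the group $G/N$ is prosoluble (being an extension of a prosoluble group by a prosoluble group). Therefore $G$ is finite-by-prosoluble.

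I expect no genuine obstacle here: the hard work is already contained in Lemmas~\ref{metapronilpotent} and~\ref{vpronilpotent}, and what remains is the bookkeeping above together with a few standard facts --- closed subgroups of pronilpotent profinite groups are pronilpotent, the pronilpotent residual commutes with quotients by subgroups contained in it, and an extension of a prosoluble group by a prosoluble group is prosoluble. The one idea worth isolating is that passing to $\gamma_\infty(G)$ restores the virtual pronilpotency needed to invoke Lemma~\ref{vpronilpotent}.
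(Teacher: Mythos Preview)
Your proof is correct and follows essentially the same strategy as the paper's: both arguments combine Lemma~\ref{metapronilpotent} with Lemma~\ref{vpronilpotent}. The only difference is the choice of subgroup to which Lemma~\ref{vpronilpotent} is applied. The paper works directly with the pronilpotent-by-finite normal subgroup $R$ supplied by Lemma~\ref{metapronilpotent} (which is already virtually pronilpotent), obtains that $\gamma_\infty(R)$ is finite-by-procyclic, and concludes that $G$ is finite-by-prosoluble-by-pronilpotent, hence finite-by-prosoluble. You instead descend first to $\gamma_\infty(G)\leq R$, observe it inherits virtual pronilpotency, and apply the lemma there. Your route makes the normality of the finite subgroup $N$ in $G$ slightly more transparent (since $\gamma_\infty(\gamma_\infty(G))$ is characteristic in $G$), at the cost of one extra iteration of $\gamma_\infty$; the paper's route is a line shorter but leaves the normality-in-$G$ of the finite part implicit. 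Neither variant offers a real advantage over the other --- the substance is identical.
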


\begin{proof}  First we deal with  the case where $G$ is virtually pronilpotent.  Lemma~\ref{vpronilpotent} tells us that  $\gamma_{\infty}(G)$ is finite-by-procyclic and so $G$ is finite-by-prosoluble, as desired.  In the general case, by Lemma~\ref{metapronilpotent} the group $G$ is pronilpotent-by-finite-by-pronilpotent.  We deduce from  the previous  case that $G$ is finite-by-prosoluble-by-pronilpotent and the result follows.\end{proof}

Now we are ready to complete the proof of Theorem \ref{main} which we restate here for the reader's convenience. We write $F_2(G)$ to denote the second term of the upper Fitting series of $G$.
\begin{theorem}\label{last} 
The group $G$ is finite-by-procyclic-by-pronilpotent.
\end{theorem}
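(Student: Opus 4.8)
The plan is to reduce the statement, via a chain of controlled quotients, to the two situations already settled in this section — virtually pronilpotent groups (Lemma~\ref{vpronilpotent}) and metapronilpotent groups (Proposition~\ref{super}) — and then to assemble the pieces. Note that the stated conclusion is equivalent to $\gamma_\infty(G)$ being finite-by-procyclic, since by Lemma~\ref{subgrchar} a finite-by-procyclic normal subgroup has a finite \emph{characteristic} subgroup with procyclic quotient.

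First I would replace $G$ by a suitable quotient so that $G$ becomes prosoluble: by Lemma~\ref{prosolu} the group $G$ is finite-by-prosoluble, and by Remark~\ref{Re1} we may factor out the corresponding finite normal subgroup. Next I would show that $G/F_2(G)$ is finite. By Lemma~\ref{metapronilpotent} together with Lemma~\ref{fbp_implies_pbf} the group $G$ has a normal metapronilpotent subgroup $M$ of finite index; then $\gamma_\infty(M)$, being characteristic in $M$ and pronilpotent, is a pronilpotent normal subgroup of $G$, so $\gamma_\infty(M)\le F(G)$; hence $MF(G)/F(G)$ is pronilpotent, so $MF(G)/F(G)\le F(G/F(G))=F_2(G)/F(G)$, and therefore $M\le F_2(G)$ and $[G:F_2(G)]<\infty$.

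Second, I would exploit $F_2(G)$ itself. Since its pronilpotent residual lies in $F(G)$, the subgroup $F_2(G)$ is metapronilpotent, so Proposition~\ref{super} gives that $P:=\gamma_\infty(F_2(G))$ is finite-by-procyclic; by Lemma~\ref{subgrchar} and Remark~\ref{Re1} I may assume $P$ is procyclic. As a procyclic normal subgroup of $G$, $P$ is centralized by $G'$, hence by $\gamma_\infty(G)$, and since $P\le\gamma_\infty(F_2(G))\le\gamma_\infty(G)$ this gives $P\le Z(\gamma_\infty(G))$. Moreover $F_2(G)/P$ is pronilpotent and $G/F_2(G)$ is finite, so $G/P$ is virtually pronilpotent; Lemma~\ref{vpronilpotent} applied to $G/P$ then yields that $\gamma_\infty(G)/P=\gamma_\infty(G/P)$ is finite-by-procyclic.

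Finally I would combine these. Write $D=\gamma_\infty(G)$, so $P\le Z(D)$ is procyclic and $D/P$ is finite-by-procyclic. By Lemma~\ref{subgrchar} applied to $D/P$, choose $P\le S\le D$ with $S$ characteristic in $D$ (hence normal in $G$), $S/P$ finite and $D/S$ procyclic. Then $S/Z(S)$ is finite, so $S'$ is finite by Schur's theorem; factoring it out (Remark~\ref{Re1}) we may assume $S$ is abelian, and an abelian profinite group which is procyclic-by-finite is easily seen to be finite-by-procyclic (cf. Lemma~\ref{abelian_fbp}), so after one further use of Lemma~\ref{subgrchar} and Remark~\ref{Re1} we may assume $S$ is procyclic. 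Now $D$ has a procyclic normal subgroup $S$ with $D/S$ procyclic, and $D'\le S$ is procyclic as well. The main obstacle is precisely this last step — upgrading the ``procyclic-by-procyclic'' description of $D$ to ``finite-by-procyclic''. Here one must re-invoke the covering hypothesis in the heavily reduced group, running a case analysis on the torsion of $S$ in the spirit of Lemma~\ref{000} and Proposition~\ref{important}: by Lemma~\ref{useful} the cyclic subgroups generated by coprime commutators lying in $D$ are normalized by an open subgroup of $G$, and by Lemma~\ref{use} two infinite procyclic normal subgroups inside a common pro-$p$ subgroup must commute; together these should force $S$ and a generator of $D/S$ into a single procyclic subgroup modulo a finite normal subgroup of $G$. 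This would give that $D=\gamma_\infty(G)$ is finite-by-procyclic, which as noted above is equivalent to the assertion that $G$ is finite-by-procyclic-by-pronilpotent.
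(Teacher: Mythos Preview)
Your reduction is sound and, up to the point where $S$ becomes abelian, it is in fact more direct than the paper's argument: the paper inducts on the index of a metapronilpotent normal subgroup and splits into cases according to whether $G/F_2(G)$ is simple, whereas you go straight from Proposition~\ref{super} (applied to $F_2(G)$) to Lemma~\ref{vpronilpotent} (applied to $G/P$).

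The genuine gap is exactly where you flag it, but the fix is not the one you sketch. Once $S$ is abelian with $P\le Z(D)$, $S/P$ finite and $D/S$ procyclic, you should \emph{not} reduce $S$ further to procyclic. Instead observe that $D$ is then metapronilpotent with $\gamma_\infty(D)$ abelian (since $\gamma_\infty(D)\le D'\le S$), so Lemma~\ref{intersection_with_center} gives $\gamma_\infty(D)\cap Z(D)=1$; as $\gamma_\infty(D)\le S$ while $P\le Z(D)$ has finite index in $S$, this forces $\gamma_\infty(D)$ finite. Factoring it out (Remark~\ref{Re1}) makes $D=\gamma_\infty(G)$ pronilpotent, so $G$ becomes metapronilpotent and Proposition~\ref{super} finishes. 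This is precisely the device the paper uses, both inside Lemma~\ref{vpronilpotent} and, by reference to that lemma, in the proof of Theorem~\ref{last} itself.

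Your proposed workaround via Lemmas~\ref{useful} and~\ref{use} is both unnecessary and, as written, not an argument: a procyclic-by-procyclic profinite group need not be finite-by-procyclic (take $\mathbb{Z}_p\times\mathbb{Z}_p$), and your sketch does not say how the covering hypothesis excludes this for $D$. By over-reducing $S$ you discarded the one piece of information --- the central finite-index subgroup $P$ --- that makes the conclusion immediate. (A minor point: the cross-reference to Lemma~\ref{abelian_fbp} for ``abelian procyclic-by-finite $\Rightarrow$ finite-by-procyclic'' is misplaced; the claim is true, but that lemma is about Hall subgroups.)
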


\begin{proof}  In view of Lemma~\ref{prosolu} the group $G$ is finite-by-prosoluble  and by Remark~\ref{Re1} it is sufficient to deal with the case  where $G$ is prosoluble.  By Lemma~\ref{fbp_implies_pbf} a profinite group, that is  finite-by-pronilpotent,  is actually pronilpotent-by-finite. Therefore  Lemma~\ref{metapronilpotent} implies that $G$ is virtually metapronilpotent. Let $j$ be the minimal number such that $G$ has a metapronilpotent normal subgroup of index $j$. We will argue by induction on $j$. If  $j=1$, then $G$ is metapronilpotent and the theorem holds by Proposition~\ref{super}.   

Assume now that $j\geq2$ and use induction on $j$.  Suppose first that $G/F_2(G)$ has a proper normal subgroup $H/F_2(G)$. By induction $\gamma_\infty(H)$ is finite-by-procyclic.  Since the finite part of $\gamma_\infty(H)$ is characteristic in $G$, in view of Remark~\ref{Re1} we can factor out  it and assume that $\gamma_\infty(H)$ is procyclic. As observed  in the proof of Lemma~\ref{vpronilpotent}, this implies that $\gamma_\infty(H)\leq Z(G')$.  Moreover  $G/\gamma_\infty(H)$ is virtually pronilpotent since the image of $H$  in the quotient is pronilpotent  and has finite index.  Thus,  in view of Lemma~\ref{vpronilpotent}, the quotient $\gamma_\infty(G)/\gamma_\infty(H)$ is finite-by-procyclic.  Using that $\gamma_\infty(H)\leq Z(G')$, as done above in Lemma~\ref{vpronilpotent}, we deduce that $\gamma_\infty(G)$ is finite-by-procyclic and the result holds.

Next we deal with the case where $G/F_2(G)$ is simple. Since $G$ is prosoluble, we have necessarily that the index  $j$ is a prime and  we can write $G=F_2(G)\langle a\rangle$, for some element $a\in G$. Since $F_2(G)$ is metapronilpotent, $\gamma_\infty(F_2(G))$ is finite-by-procyclic by Proposition~\ref{super}. In view of Remark~\ref{Re1} we can assume that  $\gamma_\infty(F_2(G))$ is procyclic. This implies that $G'\cap F_2(G)$ centralizes  $\gamma_\infty(F_2(G))$, that is  $\gamma_\infty(F_2(G)) \leq Z(G'\cap F_2(G))$. Hence  $G'\cap F_2(G)$ is central-by-pronilpotent and, in particular, pronilpotent. 
Observe that $[G,a]\leq G'\cap F_2(G)$ and so  $[G,a]\leq F(G)$. Since also $\gamma_\infty(F_2(G))\leq F(G)$, we deduce that  $\gamma_\infty(G)\leq F(G)$ and so it follows that  $G/F(G)$ is pronilpotent, that is, $G$ is metapronilpotent.  By  Proposition~\ref{super} the result follows.

\end{proof}

We will now prove Corollary \ref{cor}.
\begin{proof}[Proof of Corollary \ref{cor}]
In view of Theorem \ref{main} $\gamma_\infty(G)$ is finite-by-procyclic. By Lemma \ref{subgrchar} $\gamma_\infty(G)$ has a characteristic subgroup $N$ such that $\gamma_\infty(G)/N$ is procyclic. Let $C/N=C_{G/N}(\gamma_\infty(G)/N)$. Since the group of automorphisms of a procyclic group is abelian, it follows that $G/C$ is abelian. Moreover, by Lemma \ref{centralizer_in_theFitting} $C/N$ is pronilpotent, whence the result.  
\end{proof}

\section*{Declarations} 
\noindent {\bf Conflict of interest} The authors have no relevant financial and non-financial interests to disclose.

\end{document}